\documentclass[9pt]{IEEEtran}
%


\usepackage{yfonts}

\addtocontents{toc}{\protect\setcounter{tocdepth}{2}}

\hyphenation{op-tical net-works semi-conduc-tor}

\pdfminorversion=4

\usepackage[caption = false,font=footnotesize]{subfig}

\usepackage[svgnames]{xcolor}

\usepackage{graphicx}

  \usepackage{rotating}
  \usepackage{floatpag}
  \rotfloatpagestyle{empty}

  \usepackage{amsmath,amssymb}  
 \usepackage{amsthm}

\usepackage{mathrsfs,multicol}
\usepackage[mathscr]{euscript}

\usepackage{booktabs}

\usepackage{tikz}
\usepackage{pgfplots}
\usepackage{tikz-3dplot}

\usepackage{tkz-euclide}

\usepgfplotslibrary{patchplots}
\pgfplotsset{compat=newest}
\usetikzlibrary{quotes,angles}
\usetikzlibrary{decorations.pathmorphing} 
\usetikzlibrary{decorations.text} 
\usetikzlibrary{matrix} 
\usetikzlibrary{arrows.meta} 
\usetikzlibrary{shapes} 
\usetikzlibrary{calc} 
\usetikzlibrary{positioning} 
\usetikzlibrary{patterns}
\usetikzlibrary{intersections}
\usepgfplotslibrary{fillbetween}
\usepgfplotslibrary{polar}
\usepgfplotslibrary{colorbrewer}
\usepgfplotslibrary{groupplots}
\usetikzlibrary{decorations.markings}

\usepgfplotslibrary{fillbetween}

\usetikzlibrary{hobby}

\usepgfplotslibrary{groupplots}

\pgfdeclarelayer{foreground}
\pgfsetlayers{main,foreground}

\tikzstyle{block} = [draw,rectangle,thick,minimum height=2em,minimum width=2em]
\tikzstyle{sum} = [draw,circle,inner sep=0mm,minimum size=4mm]
\tikzstyle{connector} = [->,thick]
\tikzstyle{line} = [thick]
\tikzstyle{branch} = [circle,inner sep=0pt,minimum size=1mm,fill=black,draw=black]
\tikzstyle{guide} = []
\tikzstyle{legendBlock} = [rectangle,minimum height=2em,minimum width=2em]

\renewcommand{\vec}[1]{\ensuremath{\boldsymbol{#1}}} 

\makeatletter    
\pgfplotsset{
    compat=1.14,
    /tikz/max node/.style={
        anchor=south,
    },
    /tikz/min node/.style={
        anchor=south,
        name=minimum
    },
    mark min/.style={
        point meta rel=per plot,
        scatter/@pre marker code/.code={%
            \ifx\pgfplotspointmeta\pgfplots@metamin
                \def\markopts{}%
                \coordinate (minimum);
                \node [min node] {
                };
            \else
                \def\markopts{mark=none}
            \fi
            \expandafter\scope\expandafter[\markopts,every node near
coord/.style=green]
        },%
        scatter/@post marker code/.code={%
            \endscope
        },
        scatter,
    },
    mark max/.style={
        point meta rel=per plot,
        visualization depends on={x \as \xvalue},
        scatter/@pre marker code/.code={%
        \ifx\pgfplotspointmeta\pgfplots@metamax
            \def\markopts{}%
            \coordinate (maximum);
            \node [max node] {
                \pgfmathprintnumber[fixed]{\xvalue},%
                \pgfmathprintnumber[fixed]{\pgfplotspointmeta}
            };
        \else
            \def\markopts{mark=none}
        \fi
            \expandafter\scope\expandafter[\markopts]
        },%
        scatter/@post marker code/.code={%
            \endscope
        },
        scatter
    }
}
\makeatother    

\definecolor{yellow1}{RGB}{255,255,204}
\definecolor{blue2}{RGB}{161,218,180}
\definecolor{blue3}{RGB}{65,182,196}
\definecolor{blue4}{RGB}{44,127,184}
\definecolor{blue5}{RGB}{37,52,148}
\definecolor{blueForRed1}{RGB}{5,113,176}
\definecolor{brown1}{RGB}{166,97,26}
\definecolor{brown2}{RGB}{223,194,125}

\definecolor{red1}{RGB}{215,25,28}
\definecolor{red2}{RGB}{253,174,97}

\definecolor{grayMax}{cmyk}{0,0,0,85}
\definecolor{grayOne}{cmyk}{0,0,0,1}
\definecolor{grayTwo}{cmyk}{0,0,0,20}
\definecolor{grayTwoAndHalf}{cmyk}{0,0,0,28}
\definecolor{grayThree}{cmyk}{0,0,0,41}
\definecolor{grayFour}{cmyk}{0,0,0,61}
\definecolor{magentaForBlack}{RGB}{202,0,32}
\definecolor{peachForBlack}{RGB}{244,165,130}

\definecolor{darkViolet}{cmyk}{65,70,0,0}
\definecolor{darkSea}{cmyk}{85,30,0,0}
\definecolor{paleSea}{cmyk}{33,3,0,0}
\definecolor{paleViolet}{cmyk}{65,70,0,0}
\definecolor{paleGreen}{cmyk}{24,0,39,0}
\definecolor{paleOrange}{cmyk}{5,35,70,0}

\definecolor{c1}{RGB}{179,88,6}
\definecolor{c2}{RGB}{241,163,64}
\definecolor{c3}{RGB}{216,218,235}
\definecolor{c4}{RGB}{153,142,195}
\definecolor{c5}{RGB}{84,39,136}

\definecolor{tacGreen1}{cmyk}{50,0,17,0}
\definecolor{tacGreen2}{cmyk}{100,10,55,0}
\definecolor{tacBrown1}{cmyk}{12,20,45,0}
\definecolor{tacBrown2}{cmyk}{35,55,90,0}

\definecolor{darkTeal}{cmyk}{100,10,55,0}
\definecolor{lightTeal}{cmyk}{50,0,17,0}
\definecolor{toffee}{cmyk}{12,20,45,0}
\definecolor{darkToffee}{RGB}{166,97,26}

\definecolor{purDark}{RGB}{123,50,148}
\definecolor{purLight}{RGB}{194,165,207}
\definecolor{paleGreen}{RGB}{166,219,160}
\definecolor{darkGreen}{RGB}{0,136,55}

\definecolor{myCrimson}{RGB}{202,0,32}


   \newtheorem{theorem}{Theorem}[section]
    
    \newtheorem{lemma}[theorem]{Lemma}
     
    \newtheorem{definition}[theorem]{Definition}

 \pgfplotsset{compat=newest}

\usepackage{mathtools}
\usepackage{amsfonts}
\usepackage{ucs}
\usepackage[utf8x]{inputenc}
\usepackage[T1]{fontenc}
\usepackage{ae,aecompl}


\usepackage{dsfont}

\begin{document}
\title{Relay self-oscillations for second order, stable, nonminimum phase plants}
\author{Maben~Rabi
\thanks{The author is with Halmstad University, Sweden.
}%
}

\maketitle

\begin{abstract}%
    We study a relay feedback system (RFS) having an ideal relay element and a linear, time-invariant, second order plant.
We model the relay element using an ideal on-off switch. And we model the second order plant with
a transfer function that: (i)~is Hurwitz stable,
(ii)~is proper, (iii)~has a positive real zero, and (iv)~has a positive DC gain. 

We analyze this RFS using a state space description, with closed form expressions for the
state trajectory from one switching time to the next. We prove that the state transformation
from one switching time to the next: (a)~has a Schur stable linearization, (b)~is a contraction
mapping, and (c)~maps points of large magnitudes to points with lesser magnitudes. Then using
the Banach contraction mapping theorem, we prove that every trajectory of this RFS
converges asymptotically to an unique limit cycle. This limit cycle is symmetric,
and is unimodal as it has exactly two relay switches per period. This result helps
understand the behaviour of the relay autotuning method, when applied to second order
plants with no time delay.

We also treat cases where the plant either has no finite zero, or has exactly one zero and that is negative.
\end{abstract}

\section{Introduction}
\newcommand{\relay}{
    \begin{tikzpicture}[yscale=0.25,xscale=0.34]
        \draw[ultra thick] (-2,-1)--(0,-1)--(0,1)--(2,1);
        \draw[thin, <->] (-2,0)--(2,0);
        \draw[thin, <->] (0,-2.5)--(0,2.5);
        \draw[thin, dashed] (1,1)--(1,0);
    \end{tikzpicture}
       }
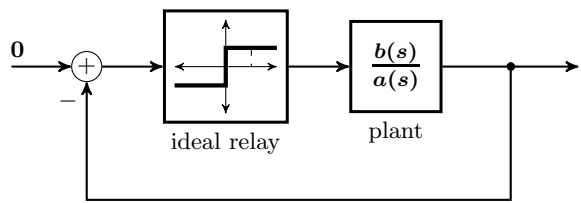
\begin{figure}[h]
\begin{center}
\begin{tikzpicture}[scale=1, auto, >=stealth']
 \matrix[ampersand replacement=\&, row sep=0.4cm, column sep=0.8cm]
{
    \node(startOne) {};    \&
      \node[sum] (summer) {$\mathbf{+}$} ;  \&
    \node[block,minimum size=1.1cm,line width=0.4mm,label=below:{ideal relay}] (relay)  {\relay }; \&
    \node[block,minimum size=1.2cm,line width=0.4mm,label=below:plant] (plant)  {  {\mbox{\Large$\vec{{\frac{b(s)}{a(s)}}}$}}  }; \&
      \node[branch] (bendOne) {} ; \& 
      \node (output) {};  
      \\
      \&
      \node (bendThree) {} ; \&
      \& 
     \&
      \node (bendTwo) {} ;  \&
  \\
    };
    \draw [connector] (startOne) -- node[pos=0.1] {$\vec{0}$} (summer);
    \draw [connector] (summer) -- (relay);
    \draw [connector] (relay)--(plant);     \draw [connector] (plant)--(output);
    \draw [connector] (bendOne)  -| (bendTwo.center) -- (bendThree) -|  node[pos=0.92]
  {$\mathbf{-}$} (summer);
\end{tikzpicture}
\end{center}
\caption{Relay feedback system}
\label{fig:blockDiagram}
       \end{figure}
\IEEEPARstart{F}{or} what plant transfer functions does the relay feedback system~(RFS) of Figure~\ref{fig:blockDiagram} 
converge to a globally asymptotically stable limit cycle~?
In this paper we give a sufficient condition, applicable to second order plants.

A {\textit{symmetric limit cycle}} is one whose locus in state space is symmetric w.r.t. the origin. 
If a limit cycle of the RFS is such that the relay element's output changes sign exactly two times per period, then we call it an {\textit{unimodal limit cycle.}}
We say the RFS  {\textit{self-oscillates}} if it has a limit cycle to which all trajectories converge asymptotically.

Suppose that the plant Figure~\ref{fig:blockDiagram} is unknown, and an experiment shows that the loop goes into a symmetrical, 
limit cycle, then the parameters of this limit cycle can be used to estimate essential details of the plant's Nyquist locus. Using this, the {\textit{Relay autotuning method}}~\cite{astromHagglund1984relayAutotuning,astromHagglund2011relayAutotuning} tunes a PID controller for the plant. 
This motivates the study of relay oscillations.

\subsection{Previous results}
The classic study of Andronov, Vitt and Khaikin~\cite{andronovVittKhaikin1966theoryOfOscillations,minorsky1962nonlinearOscillations} contains an approach for analyzing individual second order systems, based on properties of ordinary differential equations on the plane. 
For this approach, the authors give the name {\textit{the method of point transformations,}} because the method focuses on the transformations of the system state from one switching instant to the next. Using this method they proved the stability of some second order valve oscillators, which were significant at that time for Radio engineering.

   Complex periodic behaviours can arise in RFS. Di~Bernardo~et.al~\cite{marioDiBernardoKarlJohansson2001ijbc} report a third order RFS where as the plant's DC gain is varied, a symmetric unimodal limit cycle bifurcates into two unsymmetrical unimodal limit cycles. Other third order RFS can possess limit cycles with four switches per period~\cite{varigondaGeorgiou2001relay}.  And yet other third order RFS can possess limit cycles that have infinitely many switches, or possess limit cycles whose trajectory segments are sliding motions~\cite{johanssonRantzerAstrom1999fastSwitchesAutomatica,johanssonBarabanovAstrom2002slidingTAC}.%

\subsubsection{Oscillation conditions for plants of arbitrary order}
Frequency domain tools such as the Hamel locus~\cite{gillePelegrinDecaulne1959feedbackControlSystems} and the Tsypkin locus~\cite{tsypkin1984relayControlSystems} give necessary conditions for limit cycles. But these tools give no information about stability. 

A state-space necessary condition for a symmetrical, periodic orbit is given in {\AA}str{\"o}m~\cite{astrom1995oscillationsRelay}. This paper also gives a linearized analysis for local stability in the neighbourhood of a limit cyle.
Goncalves~et~al.~\cite{goncalvesMegretskiDahleh2001relay} show how a Lyapunov analysis for global stability can be carried out.

Bliman and Krasnosel'skii~\cite{blimanKrasnoselskii1997periodicSolutions}, as well as Varigonda and Georgiou~\cite{varigondaGeorgiou2001relay} mention the possibility of using fixed point theorems to prove convergence
 to a limit cycle. Johansson and Rantzer~\cite{johanssonRantzer1996globalAnalysisOfThirdOrderRelay} treat the case of a third order plant with no zeros, and with three distinct, real, stable poles. They show that trajectories obey an `area contraction' property.

Megretski~\cite{megretski1996globalStabilityOfRelayOscillations} gives a graphical template for the step response of a plant, for it to self-oscillate under relay feedback. The template closely resembles that of a stable, nonminimum phase, second order plant. The author proves that if a plant transfer function of arbitrary order has a step response that resembles this template, then self-oscillations are guaranteed if a discrete time iteration of inter-switching times has an unique stationary solution. And such a unique stationary solution exists if a related discrete time linearized iteration satisfies an~$l^1$-norm condition.

\subsubsection{Oscillation conditions for second order RFS}
The doctoral thesis of Holmberg~\cite{holmberg1991thesisRelay} contains detailed studies of relay oscillations in first and second order RFS. 
One significant result is its Theorem~5.1 for  RFS with stable, second order plants. This theorem states that if the RFS possesses a unique limit cycle, then all trajectories converge globally asymptotically to it.

    Given a specific plant, we can apply~{\AA}str{\"o}m's necessary condition~\cite{astrom1995oscillationsRelay} to count the number of limit cycles possessed by the RFS. If we can verify that only one limit exists, then by Holmberg's theorem we can be sure of global asymptotic convergence, as is illustrated by Example~2 in~\cite{johanssonRantzerAstrom1999fastSwitchesAutomatica}.%

    Holmberg's thesis also gives some classes of second order plants and RFS  with guaranteed unique limit cycles. But the plants in these classes have no finite zeroes, and the relay element is required to have a non-zero hysteresis width. If we take a stable plant from Holmberg's classes, and let the hysteresis width be zero, then the oscillations vanish.%

In specific, if the second order plant (a)~is stable, (b)~nominimum phase, and  (c)~has positive DC gain, then the RFS  has (i)~bounded trajectories,
(ii)~no equilibrium point, and (iii)~no chattering point. Because of these constraints and because trajectories lie on the plane and cannot cross each other, any trajectory faces three mutually exclusive choices: (i)~be a limit cycle, (ii) curve inwards, or (iii)~curve outwards. Then is it not inevitable that this RFS must have an unique limit cycle, so that Holmberg's theorem kicks in~?

 Not as such. In principle, it is possible that the RFS has a nested sequence of limit cycles, with attracting limit cycles alternating with repelling limit cycles. One has to explicitly rule out such possibilities.

\subsection{Outline of the rest of the paper}
We work with a state space realization of the RFS. One can express the trajectory between switching points as an explicit expression. Because the plant is second order, this can be done `by hand.' But we apply a short-cut, made possible because the literature gives an explicit similarity transformation for putting a companion matrix into its Jordan canonical form.

With the closed form trajectory, and basic Calculus we study the function that maps the state at one switching instant to the state at the next switching instant.
We prove that this switching point transformation function has a stable linearization.
    By the way, this map is frequently called the~{\textit{Poincar{\'{e}} map~\cite{holmberg1991thesisRelay,astrom1995oscillationsRelay,johanssonRantzerAstrom1999fastSwitchesAutomatica,varigondaGeorgiou2001relay,marioDiBernardoKarlJohansson2001ijbc},}} and in some cases called the {\textit{Surface impact map}} from the switching surface to itself~\cite{goncalvesMegretskiDahleh2003relayStability}.

We prove that self-oscillations are guaranteed for every RFS with a second order plant that is stable, nonminimum phase, and has a positive DC gain. We show that the convergent limit cycle is symmetric and unimodal.
 
We prove that if the second order plant is stable, has positive DC gain, and has no finite zero (numerator of transfer function is a constant), then the RFS converges asymptotically to the origin.

We prove that if the second order plant is stable, has positive DC gain, and has exactly one zero which is negative, then the RFS converges asymptotically to the chattering set.

When compared to~\cite{holmberg1991thesisRelay,megretski1996globalStabilityOfRelayOscillations}, we focus attention of a relatively narrow class of relay feedback systems - we do not allow time delays or hysteresis.

 \section{Preliminary definitions\label{section:notationAndDefinitions}}


Let 
 $(A,B,C)$ be a minimal realization of the plant. Then the relay feedback system evolves as per:
\begin{align}
    {\frac{d}{dt}} x & =
    A x -B  \,{\text{sign}} \left(Cx\right),  x\in {\mathbb{R}}^2,
    \label{eqn:RFSdynamics}
\end{align}
where ${\text{sign}} (\cdot )$ is the ideal signum function (on-off switch).
\begin{definition}
    The switching plane is the hyperplane 
\begin{align*}
{} {\mathscr{S}} & \triangleq \left\{ \xi \in  {\mathbb{R}}^2 : C \xi = 0 \right\} ,
\end{align*}
and it is precisely that set where the relay switches sign.
\end{definition}
\begin{figure}
\begin{center}
\begin{tikzpicture}[
    declare function={tauone(\z)= ln(2*\z+5) - ln(3) ;},
    declare function={xnewone(\z)= -11/2 + 6*(\z+4)/(2*\z+5) - 9/2/(2*\z+5) ;},
    declare function={tautwo(\z)= ln( 2*\z + 1 ) - ln(3) ;},
    declare function={xnewtwo(\z)= -7/2 + 6*(\z+2)/(2*\z+1) - 9/2/(2*\z+1) ;}
    ]
     \begin{groupplot}[
            group style={
            group size=1 by 3,
            vertical sep=20pt,
            group name=G},
            width=9.5cm,height=4.75cm,
            axis lines=middle,
            clip=false,
            no markers,
            domain=-5:5,
            grid = both,
            samples=100]
                                                            
      \nextgroupplot[
            xlabel={$\xi$},
            ylabel={${{\tau_+} \left( \xi , 0  \right)}$},
           y label style={right,font=\normalsize,at={(rel axis cs:0.5,0.99)}},
            yticklabels={,,},
            xticklabels={,,},
            legend pos=north west,
        ]
            \addplot[domain=-5:1,darkToffee,line width = 2,forget plot] {0};
            \addplot[domain=1:5,darkToffee,line width = 2,] {tautwo(x)};
            \addplot[domain=-5:-1,dashed,lightTeal,line width = 2,forget plot] {0};
            \addplot[domain=-1:5,dashed,lightTeal,line width = 2] {tauone(x)};
              \legend{${\frac{s+3}{s^2+3s+2}}$,${\frac{-s+3}{s^2+3s+2}}$};

      \nextgroupplot[
        xlabel={$\xi$},
        ylabel={${\begin{pmatrix} 1 & 0 \end{pmatrix} \times {\psi_+} \left(  \xi , 0  \right)}$},
           y label style={right,font=\normalsize,at={(rel axis cs:0.10,0.98)}},
            x label style={right,font=\large,at={(rel axis cs:0.94,0.92)}},
            yticklabels={,,},
            xticklabels={,,},
        ]
            \addplot[domain=-5:1,darkToffee,line width = 2] {x};
            \addplot[domain=1:5,darkToffee,line width = 2] {xnewtwo(x)};
            \addplot[domain=-5:-1,dashed,lightTeal,line width = 2] {x};
            \addplot [domain=-1:5,dashed,lightTeal,line width = 2]{xnewone(x)};
 
     \nextgroupplot[
        xlabel={$\xi$},
        ylabel={${\begin{pmatrix} 1 & 0 \end{pmatrix} \times {\psi_-} \bigl( {\psi_+} \left(  \xi , 0  \right) \bigr) }$},
           y label style={right,font=\normalsize,at={(rel axis cs:0.27,1.1)}},
            yticklabels={,,},
            xticklabels={,,},
        ]
        \addplot[domain=-4:4,dotted,black!80,thin] {x};
        \addplot[domain=-5:-1,darkToffee,line width = 2] {-1*xnewtwo(-1*x)};
        \addplot [domain=1:5,darkToffee,line width = 2]{xnewtwo(x)};
        \addplot [domain=-1:1,darkToffee,line width = 2]{x};
        \addplot[domain=-5:-1,dashed,lightTeal,line width = 2] {-1*xnewone(-1*x)};
        \addplot [domain=-1:5,dashed,lightTeal,line width = 2]{-1*xnewone(-1*xnewone(x))};
        \node at (2,2) {\large\textcolor{red}{$\vec{\bullet}$}};
        \node at (2,2) {\huge\textcolor{red}{$\vec{\circlearrowleft}$}};

\end{groupplot}
\end{tikzpicture}
\end{center}
\caption{\label{fig:plotsForExampleOne}
    The first chart shows the first exit time~$\tau_+\left(\cdot\right)$ of points on the switching plane, which have the form:~$\left( \xi, 0 \right)$.%
\newline
    The second chart shows the $x_1$-coordinate of the images of points on the switching plane, under the first exit map~$\psi_+\left(\cdot\right).$%
\newline
    The third chart shows the $x_1$-coordinate of the images of points on the switching plane, under the first return map~$\psi_-\left( \psi_+\left( \cdot\right) \right).$%
\newline
When the plant transfer function is~${\left({-s+3}\right)/\left({s^2+3s+2}\right)}$, the first return map has an unique fixed point, shown by:~{\Large\textcolor{red}{$\vec{\circlearrowleft}$}}.%
}
\end{figure}
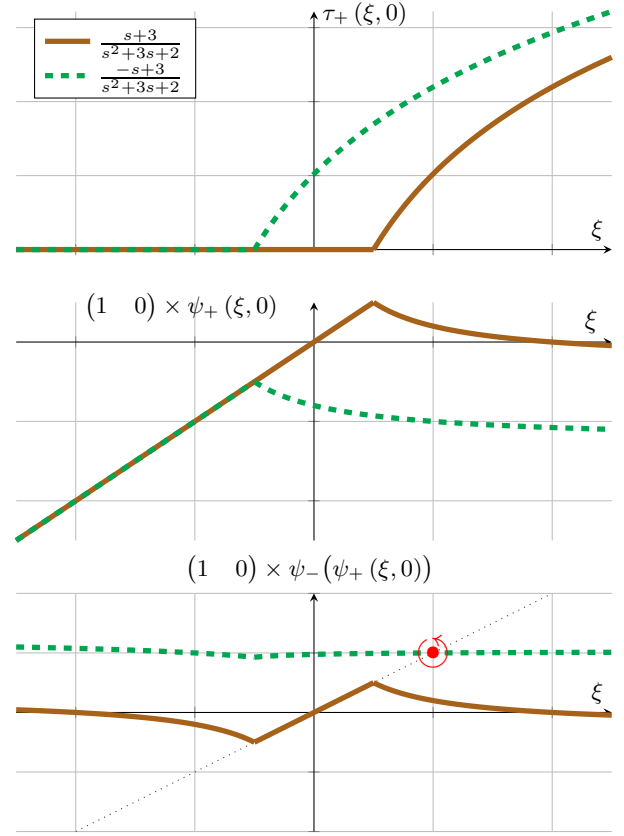
\begin{definition}
    The first exit time from positive sign~(FET+ for short) is the non-negative function:
\begin{gather*}
    {\tau_{+}} \ : \ {\mathbb{R}}^2   \to
    {\mathbb{R}} \ \text{such that} \ \
{\tau_{+}} \left( \xi \right) \ = \  
    \inf \left\{ t > 0: C x\left(t\right) < 0 \right\} , \\
    \text{where} \
    {\dot{x}} \ = \ A x - B, \ \text{and} \ x\left( 0 \right) = \xi.
\end{gather*}
\end{definition}
\begin{definition}
    The first exit time from negative sign~(FET- for short) is the non-negative function:
\begin{gather*}
    {\tau_{-}} \ : \   {\mathbb{R}}^2  \to
    {\mathbb{R}} \ \text{such that} \ \
{\tau_{-}} \left( \xi \right) \ = \  
    \inf \left\{ t > 0: C x\left(t\right) > 0 \right\} , \\ \text{where} \
    {\dot{x}} \ = \ A x + B, \ \text{and} \ x\left( 0 \right) = \xi.
\end{gather*}
\end{definition}
\begin{definition}
    The first exit map from positive sign~(FEM+ for short) is defined at every point for which, the trajectory starting there  crosses the switching plane in finite time:
\begin{gather*}
    {\psi_{+}} \ : \ {\mathbb{R}}^2   \to
    {\mathbb{R}}^2 \ \text{such that} \ \
    {\psi_{+}} \left( \xi \right) \ = \  x\left(\tau_{+}\right), \ \text{where} \\ 
    \text{where} \
    {\dot{x}} \ = \ A x - B, \ \text{and} \ x\left( 0 \right) = \xi.
\end{gather*}
\end{definition}
\begin{definition}
    The first exit map from negative sign~(FEM- for short) is defined at every point for which, the trajectory starting there  crosses the switching plane in finite time:
\begin{gather*}
    {\psi_{-}} \ : \ {\mathbb{R}}^2   \to
    {\mathbb{R}}^2 \ \text{such that} \ \
    {\psi_{-}} \left( \xi \right) \ = \  x\left(\tau_{-}\right), \ \text{where} \\ 
    \text{where} \
    {\dot{x}} \ = \ A x + B, \ \text{and} \ x\left( 0 \right) = \xi.
\end{gather*}
\end{definition}
Figure~\ref{fig:plotsForExampleOne} illustrates the above definitions by plotting the first exit time etc. for two examples of second order plants.
\subsubsection*{Symmetry w.r.t. the origin of the state space}
\noindent
The following identities hold: for every~$x \in {\mathbb{R}}^2,$
\begin{gather*}
{\tau_{-}} \left( x \right) \; = \; {\tau_{+}} \left(  - x \right), \quad \text{and,} \quad
{\psi_{-}} \left( x \right) \; = \;
 - {\psi_{+}} \left( - x \right).
\end{gather*}
\section{Theorems guaranteeing self-oscillations\label{section:secondOrder}}
We consider plant transfer functions of the form
\begin{gather}
    {\frac{ - \kappa s + \gamma}{\left(s - p_1 \right) \left( s - p_2 \right)}},
		\label{eqn:secondOrderTransferFunction}
\end{gather}
where the real parameters~$\kappa, \gamma$ are both positive, and the poles~$p_1, p_2$ are both stable. For this transfer function, the observer realization takes the form:
\begin{gather}
{\dot{x}} \ = \ A x + Bu, \quad
y \ = \ Cx, \ \text{where,} \\
    A = \begin{bmatrix}
        0 & - p_1 p_2 \\
        1 & \left( p_1 + p_2 \right) 
        \end{bmatrix},
    \quad
    B = \begin{bmatrix}
        \gamma \\ -\kappa 
        \end{bmatrix}, \quad
    C = \begin{bmatrix}
        \ 0 &  1 \
        \end{bmatrix}. \label{eqn:observerRealization}
\end{gather}
Because the poles are all nonzero, the matrix~$A$ is invertible, and so we can rewrite the evolution equation~\eqref{eqn:RFSdynamics} as:
\begin{align}
{\frac{d}{dt}}\left( x - A^{-1}B  \right) & =
    A \left( x - A^{-1}B   \right)  , \ x_2 \ge 0,
    \label{eqn:secondOrderRplus} \\
{\frac{d}{dt}}\left( x + A^{-1}B \right) & =
    A \left( x + A^{-1}B  \right)  ,  \ x_2 \le 0.
    \label{eqn:secondOrderRminus}		
\end{align}
In the appendix~\ref{sec:fillipovAppendix}, we shall pay closer attention to how the vector field could be defined at the switching plane, and what notions of solution to apply for this ODE.
\subsection{The switching point transformation function}
In Section~\ref{section:notationAndDefinitions} we defined the first exit map from positive sign. Here we shall define a function with a similar meaning, but maps real numbers to real numbers.
Notice the following facts for the second order realization~\eqref{eqn:observerRealization}:
\begin{itemize}
\item{the switching `plane' is in fact a line, and}
\item{this line has the easy description: $\left\{ ( \xi , 0 ) : \xi \in {\mathbb{R}} \right\}$}.
\end{itemize}
\begin{definition}
The switching point transformation function is:
\begin{align*}
   f_{+}{\left( \xi \right)} & \triangleq 
	   \begin{pmatrix}  1 &  0 \end{pmatrix}  
  \psi_+  \left( \xi , 0 \right) , \
	\forall \xi \in  {\mathbb{R}},
\end{align*}
where~$ \psi_+  \left( \cdot \right) $ is the first exit map under plus sign for the second order RFS~\eqref{eqn:observerRealization},~ \eqref{eqn:secondOrderRplus},~\eqref{eqn:secondOrderRminus}.
\end{definition}
It follows that the function $ f_{+}{\left( \cdot \right)}$ is defined at the real number~$\xi$ if the first exit time~$\tau_+\left( \xi , 0 \right)$ is finite, and it also follows that $ f_{+}{\left( \cdot \right)}$ is undefined otherwise.
\begin{lemma}\label{lemma:tauAndFpusDefinedEverywhereOnSwitchingLine}
Consider the RFS with the second order plant transfer function~$ {{\left( - \kappa s + \gamma \right)}/{\left(s - p_1 \right) \left( s - p_2 \right)}}$. If this transfer function: (i)~is Hurwitz stable, and (ii)~has a positive DC gain, then
the first exit time and the switching point transformation function are defined everywhere on the switching line.
\end{lemma}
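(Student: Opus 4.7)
The plan is to leverage that under the positive-sign dynamics $\dot{x} = Ax - B$, the system has a globally, exponentially attracting equilibrium at $A^{-1}B$ whose second coordinate is strictly negative. This alone forces every trajectory starting on the switching line to enter the half plane $\{x_2 < 0\}$ in finite time, so both $\tau_+$ and $f_+$ are well-defined everywhere there.

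First I would shift coordinates by setting $z(t) = x(t) - A^{-1}B$, turning the ODE into the purely linear $\dot{z} = A z$. Hypothesis (i) gives $\mathrm{Re}(p_1), \mathrm{Re}(p_2) < 0$, so $A$ is Hurwitz and $z(t) \to 0$ exponentially from every initial condition, i.e.\ $x(t) \to A^{-1}B$ exponentially. Next I would pin down the sign of the $x_2$-coordinate of this limit: since the plant transfer function satisfies $G(s) = C(sI - A)^{-1}B$, evaluation at $s = 0$ gives $CA^{-1}B = -G(0) = -\gamma/(p_1 p_2)$. Hypothesis (ii) together with $\gamma > 0$ then forces $CA^{-1}B < 0$ strictly.

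Putting these together, for any $\xi \in \mathbb{R}$ the trajectory from $(\xi, 0)$ satisfies $x_2(t) \to -\gamma/(p_1 p_2) < 0$, so the set $\{t > 0 : Cx(t) < 0\}$ is nonempty and its infimum $\tau_+(\xi, 0)$ is a finite non-negative real. Continuity of the trajectory then gives $\psi_+(\xi, 0) = x(\tau_+)$ as a bona fide point in $\mathbb{R}^2$, so $f_+(\xi)$ is also defined. I do not expect a genuine obstacle here: the exponential stability of the shifted system rules out any trajectory hovering in the upper half plane indefinitely. The only case meriting minor attention is when $\dot{x}_2(0) = \xi + \kappa \leq 0$; there the trajectory immediately dips into $\{x_2 < 0\}$, so $\tau_+ = 0$ and $\psi_+$ returns the initial point, which is a valid finite output consistent with the lemma's assertion.
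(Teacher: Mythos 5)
Your proposal is correct and follows essentially the same route as the paper: both arguments rest on the fact that under $\dot{x}=Ax-B$ every trajectory converges to the sink $A^{-1}B$, whose $x_2$-coordinate $-\gamma/(p_1p_2)$ is strictly negative by the positive-DC-gain hypothesis, so no trajectory starting on the switching line can avoid crossing it in finite time. The only cosmetic difference is that you obtain the sign of $CA^{-1}B$ from the identity $G(0)=-CA^{-1}B$, whereas the paper computes $A^{-1}B$ explicitly from the observer realization.
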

\begin{proof}
We prove by showing that trajectories cannot forever postpone crossing the switching line, which in this case is the $x_1$-axis.
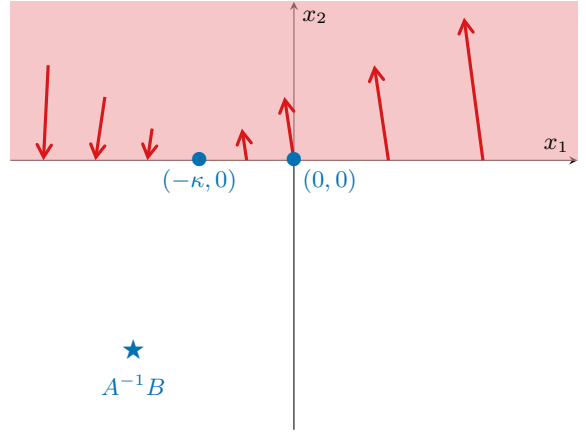
\begin{figure}
\begin{center}
\begin{tikzpicture} 
\begin{axis}[axis lines = middle, ticks = none, xmin = -3, xmax = 3, ymin = -1.7, ymax = 1, xlabel=$x_1$,  ylabel=$x_2$,
            x post scale = 1.1, 
						]
\fill [red1!40, fill opacity=0.6] (-3,0) rectangle (3,1.7);
\draw[color=red1,->,>=angle 60, very thick]   (axis cs:-0.5, 0) to (axis cs: -0.55,0.2);
\draw[color=red1,->,>=angle 60, very thick]   (axis cs:0, 0) to (axis cs: -0.1,0.4);
\draw[color=red1,->,>=angle 60, very thick]   (axis cs:1, 0) to (axis cs: 0.85,0.6);
\draw[color=red1,->,>=angle 60, very thick]   (axis cs:2, 0) to (axis cs: 1.8,0.9);
\draw[color=red1,->,>=angle 60, very thick]   (axis cs:-1.5, 0.2) to (axis cs: -1.55,0);
\draw[color=red1,->,>=angle 60, very thick]   (axis cs:-2.0, 0.4) to (axis cs: -2.1,0);
\draw[color=red1,->,>=angle 60, very thick]   (axis cs:-2.6, 0.6) to (axis cs: -2.65,0);
\node[color=blueForRed1,scale=1.5]  at (axis cs:-1, 0) {$\bullet$};
\node[color=blueForRed1,scale=1,below]  at (axis cs:-1, 0) {$(-\kappa,0)$};
\node[color=blueForRed1,scale=1.5]  at (axis cs:0, 0) {$\bullet$};
\node[color=blueForRed1,scale=1,below right]  at (axis cs:0, 0) {$(0,0)$};
\node[color=blueForRed1,scale=1]  at (axis cs:-1.7, -1.2) {$\bigstar$};
\node[color=blueForRed1,scale=1,below]  at (axis cs:-1.7, -1.3) {$A^{-1}B$};
\end{axis}
\end{tikzpicture}
\end{center}
\caption{\label{fig:switchingLineUnderPositiveRelayOutput}Vector field at the switching line under~${\dot{x}}  =  A x - B$.}
\end{figure}
Consider the point~$\left( x_1 , x_2 \right)$ where~$x_2 \ge 0.$ Under the flow of the ODE~\eqref{eqn:secondOrderRplus}, the trajectories shall asymptotically converge to the sink~$A^{-1}B.$ Since
\begin{align}
A^{-1}B & =
  \begin{pmatrix}
	   {\frac{ p_1 + p_2  }{p_1 p_2}}  \gamma - \kappa \\
		-  {\frac{1}{p_1 p_2}} \gamma
	\end{pmatrix} ,
        \label{eqn:sinkCoordinates}
\end{align}
and since $\gamma/p_1 p_2 > 0 ,$ it follows that the sink~$A^{-1}B$ lies below the $x_1$-axis, as shown in 
Figure~\ref{fig:switchingLineUnderPositiveRelayOutput}. Hence it is inevitable that in some finite time, the
trajectory that started at the point~$\left( x_1 , x_2 \right)$ shall cross the $x_1$-axis as it moves towards the sink~$A^{-1}B .$
Thus the first exit time~$\tau_+\left( \cdot \right)$ is defined everywhere on the switching line. Since we have:
\begin{align*}
  f_+\left( \xi \right)  & = 
	 \begin{pmatrix}  1 &  0 \end{pmatrix}  
  \psi_+  \left( \xi , 0 \right) , \\
	& =  \begin{pmatrix}  1 &  0 \end{pmatrix} 
	    \left(
			   A^{-1}B + e^{A\tau_+\left( \xi , 0 \right)} \left( \begin{pmatrix}  \xi \\  0 \end{pmatrix}   -  A^{-1} B \right)
			\right) ,
\end{align*}
it follows that~$ f_+\left( \cdot \right)$ is defined for all real values. 
\end{proof}
\subsection{Banach's fixed point theorem\label{section:sub:contractionMapping}}
\noindent
The following two items are tailored to functions on~${\mathbb{R}}.$
\begin{definition}
    Let $\Omega \subseteq {\mathbb{R}}.$ Then a function~$f : \Omega \to \Omega$ is a contraction mapping if there is a~$\rho$ such that $ 0 \le \rho < 1 , $ and
\begin{align*}
    \left\lvert f (  \xi )  - f (  {\xi}^{\prime} )\right\rvert   & \le  \rho \,  {\left| \xi - {\xi}^{\prime} \right|}  , \ \text{for any two} \ \xi , \, {\xi}^{\prime} \in \Omega .
\end{align*}
\end{definition}
\vskip 5pt
\begin{theorem}[Banach's fixed point theorem~\cite{palais2007banachFixedPointTheorem}]
\label{theorem:banachFixedPoint}
Let the set~$\Omega \subset {\mathbb{R}},$ and let this set be non-empty, closed and bounded. If the function~$f$ is a contraction mapping on~$\Omega,$ 
    then it follows that: (i)~the function~$f$ has a unique fixed point in~$\Omega$ and, 
    (ii)~for every~$\xi \in \Omega , $ the infinite sequence of iterates:
\begin{gather*}
  \left\{ \: f\left( \xi \right) , \quad  f\left( f\left( \xi \right) \right) ,  \quad   f\left( f\left( f\left( \xi \right) \right) \right) , \quad  \ldots \; \right\}
\end{gather*}
converges to the said fixed point.
\end{theorem}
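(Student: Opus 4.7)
The plan is to prove Banach's theorem in the classical Picard-iteration style, specialised to the one-dimensional setting assumed in the theorem. First I would pick any seed point $\xi_0 \in \Omega$ and define the iterates $\xi_{n+1} = f(\xi_n)$ for $n \ge 0$. Since $f$ maps $\Omega$ into itself, the whole sequence stays in $\Omega$, so I do not need to worry about leaving the domain.

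Next I would establish the geometric decay of consecutive increments. Applying the contraction inequality $n$ times gives
\begin{align*}
\left\lvert \xi_{n+1} - \xi_n \right\rvert \;=\; \left\lvert f(\xi_n) - f(\xi_{n-1}) \right\rvert \;\le\; \rho \left\lvert \xi_n - \xi_{n-1} \right\rvert \;\le\; \cdots \;\le\; \rho^n \left\lvert \xi_1 - \xi_0 \right\rvert.
\end{align*}
Then by the triangle inequality, for any $m > n$,
\begin{align*}
\left\lvert \xi_m - \xi_n \right\rvert \;\le\; \sum_{k=n}^{m-1} \left\lvert \xi_{k+1} - \xi_k \right\rvert \;\le\; \left\lvert \xi_1 - \xi_0 \right\rvert \sum_{k=n}^{\infty} \rho^k \;=\; \frac{\rho^n}{1 - \rho} \left\lvert \xi_1 - \xi_0 \right\rvert.
\end{align*}
Because $0 \le \rho < 1$, the right-hand side tends to $0$ as $n \to \infty$, so $\{\xi_n\}$ is a Cauchy sequence in $\mathbb{R}$.

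From here I would invoke completeness. A closed and bounded subset of $\mathbb{R}$ is complete, so the Cauchy sequence has a limit $\xi^\star \in \Omega$. To check that $\xi^\star$ is fixed, I would use that contractions are automatically Lipschitz continuous with constant $\rho$: passing to the limit in $\xi_{n+1} = f(\xi_n)$ gives $\xi^\star = f(\xi^\star)$. For uniqueness, if $\eta^\star$ is another fixed point in $\Omega$, then the contraction inequality yields $\left\lvert \xi^\star - \eta^\star \right\rvert = \left\lvert f(\xi^\star) - f(\eta^\star) \right\rvert \le \rho \left\lvert \xi^\star - \eta^\star \right\rvert$, and since $\rho < 1$ this forces $\xi^\star = \eta^\star$. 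The very same Cauchy argument applied to an arbitrary starting point $\xi \in \Omega$ shows that its iterates converge, and since the limit is a fixed point it must equal $\xi^\star$, proving part~(ii).

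The only subtlety I anticipate is making sure the iterates never escape $\Omega$; this is granted by the hypothesis $f : \Omega \to \Omega$, so completeness of $\Omega$ (guaranteed by closedness in $\mathbb{R}$) suffices and boundedness is not actually used in this real-line setting beyond ensuring $\Omega$ is non-degenerate. No new estimate beyond the geometric series above is required, so the argument is essentially routine once the Cauchy estimate is in hand.
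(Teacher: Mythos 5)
Your proof is correct and complete: the Picard iteration, the telescoping geometric-series estimate showing the iterates are Cauchy, completeness of a closed subset of~${\mathbb{R}}$, continuity of the contraction to pass to the limit, and the one-line uniqueness argument are all in order, and your observation that boundedness is superfluous here (closedness and non-emptiness suffice) is accurate. The paper itself does not prove this theorem; it simply cites it to Palais, whose ``simple proof'' takes a slightly different route --- it first establishes the fundamental contraction inequality $\lvert x - y\rvert \le \frac{1}{1-\rho}\bigl(\lvert x - f(x)\rvert + \lvert y - f(y)\rvert\bigr)$, from which uniqueness is immediate and the Cauchy property of the orbit follows without summing a telescoping series. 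Your argument is the standard textbook one and is equally valid; nothing further is needed.
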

\subsection{Properties of the switching point transformation function that guarantee self-oscillations\label{section:sub:conditionsOnLevelCrossingLocationFunction}}
    First we show that~$f_+$ is monotonic over the interval~$ \left( - \kappa , + \infty \right).$ 
    Consider two real numbers~$\xi , {\xi}^{\prime} \in \left( - \kappa , + \infty \right)$ such that~$ \xi $ is less than $ {\xi}^{\prime} . $ Under the flow of the ODE~${\dot{x}} = Ax - B , $ the two trajectories starting from the points:~$ {\left(  {\xi} \; 0  \right)}^T , \;  {\left(  {\xi}^{\prime} \; 0  \right)}^T  $ must satisfy the following:
\begin{enumerate}
\item{the trajectories do not cross each other,}
\item{until their next switching times, the two trajectories lie in the half space~$\bigl\{ {(x_1\;x_2 )}^T : x_2 \ge 0 \bigr\}$.}
\item{these two trajectories must start at points on the switching line that are to the right of the point~${\left( - \kappa \; 0\right)}^T$, and must end at points on the switching plane that are to the left of the point~${\left( - \kappa \; 0\right)}^T$.}
\end{enumerate}
Therefore we can conclude that
\begin{align}
f_+\left(  \xi \right) & \; > \; f_+\left(  {\xi}^{\prime}  \right) , \ \text{if} \
  \xi , {\xi}^{\prime} \in \left( - \kappa , + \infty \right) \ \text{and} \ \xi < {\xi}^{\prime} .
\label{eqn:fPlusIsMonotonicallyDecreasing}
\end{align}
In other words, the function~$f_+\left(  \cdot \right) $ is monotonically decreasing on the interval~$  \left( - \kappa , + \infty \right).$
\begin{theorem}
    Consider the RFS with the second order transfer function~$ {{\left( - \kappa s + \gamma \right)}/{\bigl( \left(s - p_1 \right) \left( s - p_2 \right) \bigr)}}$. Suppose that the following four assumptions hold:
\begin{description}
\item[A~1]{the parameters~$\kappa, \gamma$ are positive,}
\item[A~2]{the poles~$p_1 , p_2$ are stable,} 
\end{description}
and the ODE:~${\dot{x}} = Ax - B$ derived from the transfer function via~\eqref{eqn:observerRealization} is such that the resulting switching point transformation function~$f_+\left( \cdot \right)$ obeys the next two conditions:
\begin{description}
\item[A~3]{there exists a positive real number~$\eta$ such that
    the function~$ {\mathbf{-}} f_+\left( \cdot \right)$ is a contraction mapping on the interval:~$\left[ 0 , \theta \right] , $ whenever~$   \theta \ge \eta . $}
\end{description}
Then all trajectories of the RFS converge globally asymptotically to a symmetric, unimodal limit cycle.
\label{theorem:selfOscillationsForSecondOrderPlants}
\end{theorem}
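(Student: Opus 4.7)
The plan is to reduce the theorem to the one-dimensional iteration $h(\xi) := -f_+(\xi)$ on $[0,\infty)$ and then invoke Banach's fixed point theorem (Theorem~\ref{theorem:banachFixedPoint}). By the origin-symmetry identity $\psi_-(x) = -\psi_+(-x)$ recalled in Section~\ref{section:notationAndDefinitions}, the first-return map on the switching line reads, in its first coordinate, as $\xi \mapsto -f_+(-f_+(\xi)) = (h\circ h)(\xi)$. Consequently, a fixed point $\xi^* > 0$ of $h$ (equivalently, a solution of $f_+(\xi^*) = -\xi^*$) is in bijection with a symmetric unimodal limit cycle: the trajectory from $(\xi^*,0)$ reaches $(-\xi^*,0)$ in finite time under $\dot x = Ax - B$, and the symmetry returns it to $(\xi^*,0)$ under $\dot x = Ax + B$.

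First I would establish existence and uniqueness of $\xi^*$. Evaluating the vector field at the origin gives $\dot x = -B = (-\gamma,\kappa)^T$, so the trajectory enters the upper half-plane; by Lemma~\ref{lemma:tauAndFpusDefinedEverywhereOnSwitchingLine} it returns to the switching line, and since along the $x_1$-axis $\dot x_2 = x_1 + \kappa$, the downward crossing forces $f_+(0) < -\kappa$, i.e.\ $h(0) > \kappa > 0$. The monotonicity~\eqref{eqn:fPlusIsMonotonicallyDecreasing} makes $h$ increasing, and assumption A3 with contraction constant $\rho < 1$ applied pairwise against $0$ gives $h(\xi) \le h(0) + \rho\xi$, which strictly falls below $\xi$ once $\xi > h(0)/(1-\rho)$. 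Setting $\theta := \max\bigl(\eta,\,h(0)/(1-\rho)\bigr)$, the intermediate value theorem applied to the continuous map $\xi \mapsto h(\xi) - \xi$ produces a fixed point $\xi^* \in (0,\theta)$; the contraction property precludes a second fixed point in $[0,\theta]$, and rerunning the same argument on any larger $[0,\theta']$ covered by A3 excludes fixed points beyond, so $\xi^*$ is unique on $[0,\infty)$.

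Next I would verify forward invariance and apply Banach. Monotonicity of $h$ together with $h(0) > 0$ and $h(\theta) \le \theta$ shows $h$ maps $[0,\theta]$ into itself; Theorem~\ref{theorem:banachFixedPoint} then gives $h^{n}(\xi_0) \to \xi^*$ for every $\xi_0 \in [0,\theta]$. For $\xi_0 > \theta$, applying A3 on $[0,\xi_0]$ yields $h(\xi_0) < \xi_0$, so the iterates descend monotonically into $[0,\theta]$ after finitely many steps and then converge to $\xi^*$. Passing from scalar iterates to trajectories: every trajectory of the RFS reaches the switching line in finite time (in the upper half-plane directly by Lemma~\ref{lemma:tauAndFpusDefinedEverywhereOnSwitchingLine}, and in the lower half-plane by the same lemma combined with the symmetry identity). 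From the first crossing $(\xi_0,0)$, the successive switching points are $\xi_0,\,-h(\xi_0),\,h^{2}(\xi_0),\,-h^{3}(\xi_0),\ldots$, with alternating signs and magnitudes $h^{n}(|\xi_0|) \to \xi^*$. Continuous dependence of each half-plane linear flow on its initial condition over the bounded inter-switching intervals then lifts this discrete convergence to uniform convergence of the full trajectory to the closed orbit through $(\pm\xi^*,0)$.

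The step I expect to be the main obstacle is the interval bookkeeping for Banach: A3 only asserts contractivity on each $[0,\theta]$ with $\theta \ge \eta$, and a priori the Lipschitz constant $\rho_\theta$ could depend on $\theta$ and drift toward one, making the choice of a single invariant $[0,\theta]$ nontrivial. I would circumvent this by first proving the one-step inequality $h(\xi_0) < \xi_0$ for all sufficiently large $\xi_0$ (which requires contraction only against the point $0$), thereby funneling all iterates into a fixed zone on which A3 can be applied with a single constant $\rho < 1$ before Banach is invoked. The other non-routine element is the trajectory-level upgrade in the last step, which I would justify using the Lipschitz dependence of the two affine flows on initial conditions and the convergence of consecutive crossing times $\tau_\pm$ as the switching-point sequence approaches $(\pm\xi^*,0)$.
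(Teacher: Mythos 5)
Your proposal is correct and follows essentially the same route as the paper: reduce to the one-dimensional iteration of $-f_+$ on the half-line, show the iterates enter and remain in a bounded interval of the form $\left[0,\theta\right]$ with $\theta \ge \eta$, invoke Banach's fixed point theorem, identify the fixed point with a symmetric unimodal limit cycle via the symmetry $\psi_-\left(x\right) = -\psi_+\left(-x\right)$, and lift discrete convergence of switching points to trajectory convergence by continuous dependence on initial conditions. The extra care you take over the possible dependence of the contraction constant on $\theta$ is resolved in the paper by the definition of a contraction mapping (which already requires $-f_+$ to map $\left[0,\theta\right]$ into itself) and by fixing the working interval as $\bigl[0,\max\left\{-f_+\left(\xi_0\right),\eta\right\}\bigr]$ once the first switching point is known.
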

\begin{proof}
    We investigate a generic trajectory of the RFS by investigating the sequence of its switching points. We shall show that 
    this sequence is infinite, and converging to a discrete time periodic orbit. We then show that this corresponds to the RFS converging to a limit cycle.

Denote the first switching point by~$ {\left( \xi_0  \; 0 \right)}^T .$
    The sequence of switching points can then be described as:
\begin{align*}
 \Upsilon \left( \xi_0 \right) & = 
    \left\{  \begin{pmatrix} \xi_0 \\ 0 \end{pmatrix}  ,  \; \begin{pmatrix}  f_+\left( \xi_0 \right) \\ 0 \end{pmatrix}  ,  \; \begin{pmatrix}  - f_+\left( - f_+\left( \xi_0 \right) \right) \\ 0 \end{pmatrix}  ,  \right. \\
	& \quad \quad \quad \quad \quad  \left.
        \begin{pmatrix}   f_+\left( - f_+\left( - f_+\left(
           \xi_0  
        \right) \right) \right) \\ 0 \end{pmatrix}  , 
\ldots \right\}
\end{align*}
By Lemma~\ref{lemma:tauAndFpusDefinedEverywhereOnSwitchingLine} every element of this sequence is well defined. The following sequence
of real numbers:
\begin{align*}
 \Xi \left( \xi_0 \right) & = 
  \bigl\{   \xi_0  , \ - f_+\left(  \xi_0 \right) , \   -  f_+\left( - f_+\left( \xi_0 \right) \right) , \;  \bigr. \\
		& \quad \quad \quad \qquad \qquad \bigl. -f_+ \left( - f_+\left( - f_+\left( \xi_0 \right) \right) \right) , \ldots \bigr\} ,
\end{align*}%
can be constructed from the sequence~$\Upsilon\left( \xi_0 \right)$, which in turn can be exactly reconstructed from~$\Xi \left( \xi_0 \right) .$ Since the function~$ - f_+ \left(  \cdot \right)$ is defined everywhere on~${\mathbb{R}}$, it follows that the sequence~$\Xi \left( \xi_0 \right) $ is infinite.

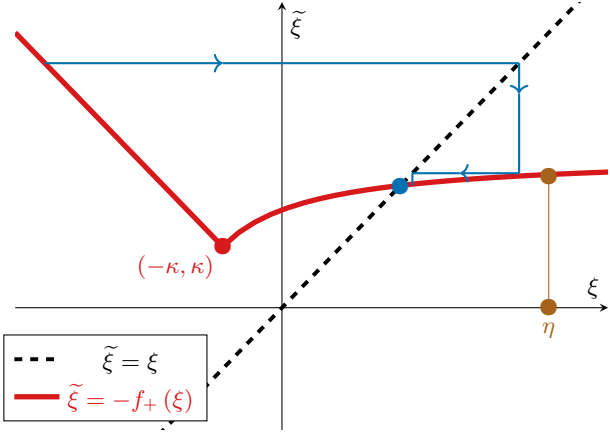
\begin{figure}
\begin{center}
\begin{tikzpicture}[declare function={phiPlusTwo(\z)= + 11/2 - 6*(\z+4)/(2*\z+5) + 9/2/(2*\z+5) ;}]
      \begin{axis}[ axis lines = middle, ticks = none, xmin = -4.5, xmax = 5.5, ymin = -2, ymax = 5,
        xlabel={$\xi$}, ylabel={${\widetilde{\xi}}$},
				legend style ={at={(-0.02,0.02)},anchor = south west},  x post scale = 1.15,  ]
            \addplot[domain=-4.5:5.5, dashed, line width = 1.5] {x};
						\addlegendentry{${\widetilde{\xi}} = \xi$};
            \addplot[domain=-4.5:-1,red1, line width = 2] {-1*x};
						\addlegendentry[red1]{${\widetilde{\xi}} =  - f_{+}\left(\xi\right)$};
            \addplot[domain=-1:5.5,red1, line width = 2]{phiPlusTwo(x)};
						\addplot[->,blueForRed1, thick]coordinates{ (-4,4) (-1,4) };
								\addplot[blueForRed1, thick]coordinates{ (-1,4) (4,4) };
						\addplot[->,blueForRed1, thick]coordinates{ (4,4) (4,3.5) };
						    \addplot[blueForRed1, thick]coordinates{ (4,3.5) (4,2.2) };
						\addplot[->,blueForRed1, thick]coordinates{ (4,2.2) (3,2.2) };
						    \addplot[blueForRed1, thick]coordinates{ (3,2.2) (2.2,2.2) };
						\addplot[blueForRed1, thick]coordinates{ (2.2,2.2) (2.2,2.0) };
						\addplot[darkToffee]coordinates{ (4.5,0) (4.5,2.15) };
\node[color=red1,scale=1.5]  at (axis cs:-1, 1) {$\vec{\bullet}$};
\node[color=red1,scale=1,below left]  at (axis cs:-1, 1) {$\left( - \kappa ,  \kappa \right)$};
\node[color=darkToffee,scale=1.5]  at (axis cs:4.5, 2.15) {$\vec{\bullet}$};
\node[color=darkToffee,scale=1.5]  at (axis cs:4.5, 0) {$\vec{\bullet}$};
\node[color=darkToffee,scale=1,below]  at (axis cs:4.5, -0.1) {$\eta$};
\node[color=blueForRed1,scale=1.5]  at (axis cs:1.99, 1.99) {$\vec{\bullet}$};
     \end{axis}
\end{tikzpicture}
\end{center}
\label{fig:convergenceForSecondOrderPlants}
\caption{Convergence of the iterates of $ - f_{+} \left( \cdot \right)$}
\end{figure}

Consider the flow of the ODE~${\dot{x}} = Ax - B $ at the switching line. 
If $\xi < - \kappa ,$ then the trajectory starting at the point~${\left( \xi \; 0 \right)}^T$ must instantaneously
cross into the open half space~$\bigl\{ {( x_1 \; x_2 )}^T: x_2 < 0 \bigr\} . $ Hence if $ \xi < - \kappa , $
then the switching time $\tau_+ \bigl( {\left( \xi \; 0 \right)}^T \bigr)   $ is zero, and so $ f_{+}\left( \xi \right) = \xi .$

Hence the second and later elements of the sequence~$\Xi \left( \xi_0 \right) $ are strictly positive. 
    For any positive~$\xi$, the following set-inclusion relations hold, due to the monotonicity~\eqref{eqn:fPlusIsMonotonicallyDecreasing} and because~$-f_+$ maps the interval~$ 
    \left[ 0, \, \max{\left\{ \xi, \eta \right\}}  \right]  $ to within itself:
\begin{align*}
- f_+ \bigl( \, \left[ 0, \, \xi  \right] \, \bigr) 
         & \subseteq 
    -               f_+ \bigl( \, \left[ 0, \, \max{\left\{ \xi, \eta \right\}}  \right] \, \bigr) \\
         & \subseteq 
         \left[ 
                               -f_+ \left(  0 \right) ,  \, 
			\max{ \left\{ -  f_+ \left( \xi \right) , -  f_+ \left( \eta \right) \right\} } 
                        \right] , \\  
         & \subseteq \left[ 
                               -f_+ \left(  0 \right) ,  \, 
			\max{ \left\{ \xi , -  f_+ \left( \eta \right) \right\} } 
                        \right] , \\  
& \subset \left[ 
                              0 , \,
															\max{ \left\{ \xi , \eta \right\} } \right] . 
\end{align*}
Thus 
        the function~$ - f_+ $ is a contraction mapping on the interval~$
  \bigl[  0 , \,	\max{ \left\{ -f_+\left(\xi_0\right) , \eta \right\} } \bigr] .
$
Then it follows from Banach's Theorem~\ref{theorem:banachFixedPoint} that the function~$- f_+ \left( \cdot \right) $ has a unique fixed point in
the interval~$ 
  \bigl[  0 , \,	\max{ \left\{ -f_+\left(\xi_0\right) , \eta \right\} } \bigr]
, $
and that the sequence~$\Xi\left( \xi_0 \right)$ converges to this fixed point. Denote this fixed point by~$ {\xi}^{\circlearrowleft}$ (pronounced as xi cycle). Then we get:
\begin{gather*}
\psi_+ \begin{pmatrix} {\xi}^{\circlearrowleft} \\ 0 \end{pmatrix}   =    \begin{pmatrix}  - {\xi}^{\circlearrowleft} \\ 0 \end{pmatrix}  , 
 \psi_- \begin{pmatrix}  - {\xi}^{\circlearrowleft} \\ 0 \end{pmatrix} =   -  \psi_+ \begin{pmatrix}   {\xi}^{\circlearrowleft} \\ 0 \end{pmatrix}  
= \begin{pmatrix} {\xi}^{\circlearrowleft} \\ 0 \end{pmatrix}  .
\end{gather*}
In other words, the symmetric points~$ {\left( {\xi}^{\circlearrowleft} \; 0 \right)}^T , {\left( - {\xi}^{\circlearrowleft} \; 0 \right)}^T $ are the two switching points of a symmetric unimodal limit cycle.

Because solutions of any linear ODE depend continuously on the initial condition and time, we can conclude that every trajectory approaches this limit cycle, as the trajectory's switching points approach those of the limit cycle. 
\end{proof}
Next we analyze three cases of stable, nonminimum phase, second order transfer functions: (a)~having distinct real poles, (b)~having a repeated real pole, and, (c)~having a pair of complex conjugate poles. In each case we show that the above theorem holds.

\subsection{Main result}
\begin{theorem}
Suppose the relay feedback system has a second order linear time invariant plant whose transfer function: (a)~is proper, (b)~is stable, (c)~is nonminimum phase, and (d)~has a positive DC gain.
Then there is a symmetric, unimodal limit cycle to which every trajectory converges asymptotically. 
\end{theorem}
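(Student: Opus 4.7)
The plan is to reduce the main theorem to an application of Theorem~\ref{theorem:selfOscillationsForSecondOrderPlants}. First I write the plant's transfer function in the canonical form~${(-\kappa s + \gamma)/\bigl( (s-p_1)(s-p_2) \bigr)}$, which is possible since the plant is proper, second order, and nonminimum phase with exactly one finite zero. Stability of the plant gives hypothesis~A~2 immediately. Positive DC gain means~$\gamma/(p_1 p_2) > 0$; stability of the poles forces~$p_1 p_2 > 0$ whether they are real or a complex conjugate pair, so~$\gamma > 0$. The nonminimum phase assumption places the zero at~$s = \gamma/\kappa > 0$, and combined with~$\gamma > 0$ this forces~$\kappa > 0$. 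Hence~A~1 holds, and only the contraction-mapping condition~A~3 remains to be verified.

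To analyze~A~3, I exploit the closed-form expression
\begin{align*}
f_+(\xi) = \begin{pmatrix} 1 & 0 \end{pmatrix} \left( A^{-1}B + e^{A\tau_+(\xi,0)} \left( \begin{pmatrix} \xi \\ 0 \end{pmatrix} - A^{-1}B \right) \right)
\end{align*}
obtained in the proof of Lemma~\ref{lemma:tauAndFpusDefinedEverywhereOnSwitchingLine}, together with the implicit equation that defines~$\tau_+(\xi,0)$ as the smallest positive time at which the second coordinate of the trajectory vanishes. Differentiating this implicit equation yields~$d\tau_+/d\xi$, and substituting back produces a closed-form expression for~$df_+/d\xi$. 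The goal is then to show that~${\lvert df_+/d\xi \rvert \le \rho < 1}$ uniformly on~$[0,\theta]$ for every~$\theta$ exceeding some threshold~$\eta$, whereupon the mean value theorem makes~$-f_+$ a contraction on~$[0,\theta]$.

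The hardest part is that~$e^{At}$ takes three structurally distinct forms depending on whether the poles are distinct real, repeated real, or a complex conjugate pair, so the derivative bound must be established in three subcases. In each subcase I would use the Jordan form of the companion matrix~$A$ (for which, as the paper notes, an explicit similarity transformation is available in the literature), compute~$e^{At}$ explicitly, and derive the resulting scalar formula for~$df_+/d\xi$. Establishing the uniform bound for large~$\xi$ is likely most delicate in the complex conjugate case, where~$e^{At}$ carries oscillatory components and the equation determining~$\tau_+$ is transcendental, so careful asymptotic bookkeeping on the branch of~$\tau_+$ is required. Once~A~3 is verified in all three pole configurations, Theorem~\ref{theorem:selfOscillationsForSecondOrderPlants} delivers a symmetric unimodal limit cycle to which every trajectory converges globally asymptotically, completing the proof.
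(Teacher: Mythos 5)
Your overall strategy is the paper's: reduce to Theorem~\ref{theorem:selfOscillationsForSecondOrderPlants}, check A~1--A~2 from the hypotheses, and verify the remaining condition on $f_+$ separately for distinct real, repeated real, and complex conjugate poles, using the explicit Jordan-form similarity transformation to get $e^{At}$ and then a closed-form expression for $df_+/d\xi$ via implicit differentiation. The derivation of $\kappa,\gamma>0$ from nonminimum phase and positive DC gain is correct, and the identification of the complex-conjugate case as the delicate one matches the paper.

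There is, however, a genuine gap. You plan to establish only that $\lvert df_+/d\xi\rvert\le\rho<1$ on $[0,\theta]$ and then invoke the mean value theorem to conclude that $-f_+$ is a contraction. But the contraction property needed for Banach's theorem (and for the set-inclusion chain inside the proof of Theorem~\ref{theorem:selfOscillationsForSecondOrderPlants}) has \emph{two} ingredients: a Lipschitz constant below one \emph{and} the invariance $-f_+\bigl([0,\theta]\bigr)\subseteq[0,\theta]$. The derivative bound alone does not give invariance: the function $f(\xi)=\xi+2.8\,e^{-1.1-0.6\xi}$ of Figure~\ref{fig:notEnough} has, on every finite interval $[0,\theta]$, a derivative bounded in magnitude by some $\rho_\theta<1$, yet it maps no interval $[0,\theta]$ into itself and has no fixed point. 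The subtlety is that the Weierstrass-type bound $\rho_\theta$ you would extract on $[0,\theta]$ may tend to $1$ as $\theta\to\infty$, so you cannot conclude the existence of a single threshold $\eta$ from the pointwise bound $\lvert f_+'\rvert<1$. The paper closes this hole with a separate argument in each case: for real poles (distinct or repeated) it proves $f_+$ is strictly convex on a semi-infinite interval, which yields $-f_+\left(\left[0,\xi\right]\right)\subseteq\left[0,\xi\right]$ for all $\xi\ge\eta$ as in \eqref{eqn:assumptionA3forDistinctPoles} (alternatively, for distinct real poles, the phase portrait directly gives the bounded image \eqref{eqn:setInclusionFromPhasePortrait}); for complex conjugate poles, where no such geometric bound is visible, it computes $\lim_{\xi\to\infty}\tau_+=\pi/\omega$ and $\lim_{\xi\to\infty}\lvert f_+(\xi)/\xi\rvert=e^{-\sigma\pi/\omega}<1$, which forces $\lvert f_+(\xi)\rvert\le\xi$ for all sufficiently large $\xi$. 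You need to add an argument of this kind before Theorem~\ref{theorem:selfOscillationsForSecondOrderPlants} can be applied.
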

\begin{proof}
It is enough to show that Theorem~\ref{theorem:selfOscillationsForSecondOrderPlants} holds for the above listed three cases of stable, nonminimum phase, second order systems. And to show that this theorem holds, it is enough to show that the switching point transformation function satisfies both the contraction mapping property, and assumption~A~4 in the statement of Theorem~\ref{theorem:selfOscillationsForSecondOrderPlants}. We show precisely this, in the coming three sections where we deal individually with each of the above listed cases.
\end{proof}
\section{Case of two distinct, stable, real poles\label{section:secondOrder:twoRealPoles}}
Consider the case where the transfer function takes the form:
\begin{gather*}
    {\frac{ - \kappa s + \gamma}{\left(s + \alpha \right) \left( s + \beta \right)}},
\end{gather*}
where the real parameters~$\kappa, \gamma, \alpha, \beta$ are all positive, and~$\alpha > \beta.$ Then the observer realization takes the form: 
\begin{gather}
{\dot{x}} \ = \ A x + B u, \quad
y \ = \ Cx, \ \text{where,}
        \label{eqn:companionDistinctReal}
    \\
    A = \begin{bmatrix}
        0 & - \alpha\beta \\
        1 & - \left( \alpha + \beta \right) 
        \end{bmatrix},
    \quad
    B = \begin{bmatrix}
        \gamma \\ -\kappa 
        \end{bmatrix}, \quad
    C = \begin{bmatrix}
        \ 0 & 1 \
        \end{bmatrix}.
        \nonumber
\end{gather}
\subsection{What the geometry of the phase portrait reveals}
\begin{figure}[t]
\begin{center}
\begin{tikzpicture}
\begin{axis}[
        width = 1.0\linewidth, 
        axis lines=middle,
        xmin = -2, xmax = 10,
        ymin = -2, ymax = 8,
        ticks = none,
        colormap/viridis,
        view = {0}{90},
   ]
   \addplot3[ samples = 8,
point meta = {sqrt(x^2+y^2)},
quiver = {
u = {-0.75*y},
v = {x-2*y},
scale arrows = 0.10,
},
quiver/colored = {mapped color},
-stealth,
domain = -2:10,
domain y = -2:8,
] {0};
    %
    \draw[very thick,draw=green!50!blue,,
        decoration={text along path,
            text={invariant line},raise=1ex,
           text align={align=center}
                   },
        postaction={decorate},
         ] (-8/2,-16/3/2) -- (2.5*8,2.5*16/3);
    \draw[very thick,draw=green!50!blue,
        decoration={text along path,
            text={invariant line},raise=1ex,
           text align={align=center}
                   },
        postaction={decorate},
         ] (-8/2,-16/2) -- (10,20);
\draw[ultra thick,draw=green!30!black,] (-5,3) -- (10,3);
    \node[above] at (-0.7,3) [] {\large\textcolor{green!30!black}{Switching}};
    \node[below] at (-1.3,3) [] {\large\textcolor{green!30!black}{line}};
    \node[color=blueForRed1,scale=1.5]  at (6,3) {$\bullet$};
    \node[color=blueForRed1,scale=1,below right]  at (5.8,3) {$\left( - \kappa , 0 \right)$};
    \node[color=blueForRed1,scale=1.5]  at (8,3) {$\bullet$};
    \node[color=blueForRed1,scale=1,below right]  at (7.8,3) {$\left( 0 , 0 \right)$};
    \node[color=blueForRed1,scale=1.5]  at (0,0) {$\bullet$};
    \node[color=blueForRed1,scale=1,below right]  at (0,0) {$A^{-1} b$};
      \draw [draw=none,pattern = north west lines , pattern color = red]
     (3*3/2, 3-0.3)
   --(3*3/2, 3+0.3)
   --(6, 3+0.3)
   --(6, 3-0.3)
   --(3*3/2, 3-0.3);
    \node[color=black,scale=1.2,above right]  at (1.6,-0.1) {Image of interval $\left( - \kappa , \infty \right)$};
    \node[color=black,scale=1.2,below right]  at (1.5,0.1) {on $x_1$-axis, under the map $f_+$};
      \draw [very thick, black, ->]
     (5.6,1.0) -- (5.1,2.65) ;
   %
\end{axis}
\end{tikzpicture}
\end{center}
\caption{\label{fig:distinctRealPoles}Phase portrait under the flow: ${\dot{x}} = A x - b ,$ when $A$ has distinct negative real poles. The flow takes  the semi-infinite interval~$ \left( - \kappa , \infty \right)$ on the $x_1$-axis to the bounded interval~$ \left( - \kappa - \gamma / \alpha , -\kappa \right)  $ on the same axis, where~$\alpha$ is the largest of magnitudes of the poles. }
\end{figure}
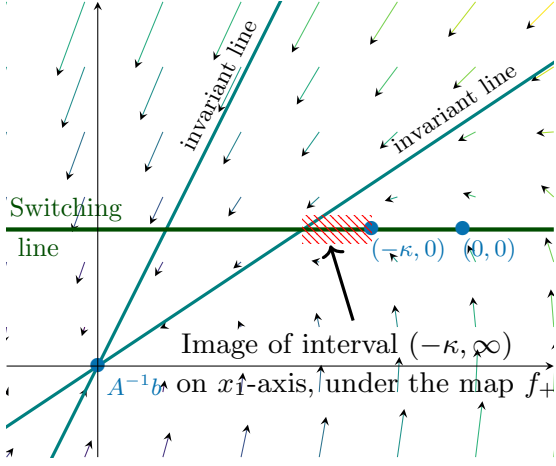
The phase portrait of the affine system in companion form~\eqref{eqn:companionDistinctReal} is shown in Figure~\ref{fig:distinctRealPoles}. This picture visually tells us the fact that the image of the semi-infinite interval~$ \left( - \kappa  , + \infty\right) $ under the function~$ f_+$ is the bounded interval~$  \left( - \kappa - \gamma / \beta , - \kappa \right) .$
So straight-away we get the useful set-inclusion relation:
\begin{align}
    {\vec{-}}  f_+ \bigl(  \left( - \kappa  , + \infty\right) \bigr) 
    &  \subseteq \left( \kappa , \kappa + \gamma / \beta  \right) .
    \label{eqn:setInclusionFromPhasePortrait}
\end{align}
Now we have: (i)~this boundedness of~$ - f_+,$ and (ii)~its monotone increasing property over the interval~$   \left( - \kappa  , + \infty\right)  .$  Are these two properties alone sufficient to guarantee that a unique fixed point exists on this interval~? Not quite.

An example of a function that possesses both of these properties is shown in Figure~\ref{fig:boundednessNotEnough}. Clearly we also need a bound on the magnitude of the derivative of the function~$f_+ .$
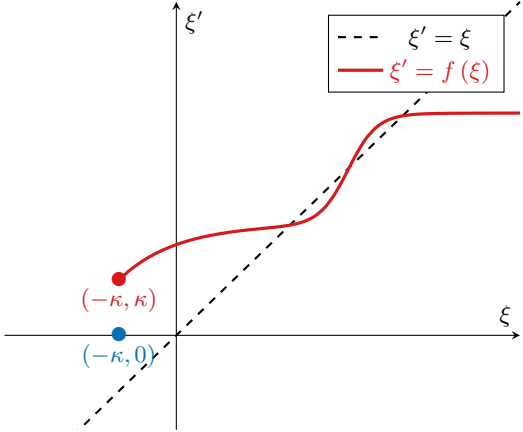
\begin{figure}[t]
 \begin{center}
 \begin{tikzpicture} 
 \begin{axis}[
                    axis lines = middle, 
                    ticks = none,
                    xmin = -3, xmax = 6,
                    ymin = -1.7, ymax = 6,
                    xlabel=$\xi$,  ylabel=$\xi^{\prime}$,
                    legend pos = {north east},
         ]
                \addplot[color=black, dashed,thick, domain=-3:6]   {x};
                \addlegendentry{$ {\xi}^{\prime} =  \xi$}
                \addplot[color=red1, very thick, smooth, samples = 40, domain=-1:6]   {3 + tanh(2*x-6) - exp(-1*x-1)};
                \addlegendentry[red1]{$ {\xi}^{\prime} = f\left( \xi \right)$}
    \node[color=blueForRed1,scale=1.5]  at (axis cs:-1, 0) {$\bullet$};
    \node[color=blueForRed1,scale=1,below]  at (axis cs:-1, 0) {$(-\kappa,0)$};
    \node[color=red1,scale=1.5]  at (axis cs:-1, 1) {$\bullet$};
    \node[color=red1,scale=1,below]  at (axis cs:-1, 1) {$(-\kappa,\kappa)$};
\end{axis}
\end{tikzpicture} 
\end{center}
    \caption{\label{fig:boundednessNotEnough}Example of a monotonic function~$f , $  bounded on the interval~$\left(  - \kappa , \infty \right) . $ Unless suitable further properties are known about~$f$, we cannot rule out the existence of more than one fixed point for~$ f. $}
\end{figure}
\subsection{The trajectory over the duration of a first exit time}
Consider the point~$ \left( \xi_0 , \; 0 \right) $ where $\xi > -\kappa . $ Let~$ \left( p , q \right) $ denote the state of the RFS after~$t$~seconds from starting at~$ \left( \xi_0 , \; 0 \right) . $ Then for time~$t$ such that $0 \le t \le \tau_+\left( \xi , 0 \right)  $:
\begin{align}
\begin{pmatrix}  p \\ q \end{pmatrix}
& =
			   A^{-1}B + e^{A t } \left( \begin{pmatrix}  \xi \\  0 \end{pmatrix}   -  A^{-1} B \right) .
\label{eqn:evolutionLawForDistinctPoles}
\end{align}
The RHS of the above equation can be put in closed form if the matrix exponential in it can be put in closed form.
\subsubsection{Matrix exponential via the Jordan diagonal form}
Since~$A$ is a companion matrix with distinct eigenvalues, we can get its Jordan diagonal form by applying a special similarity transformation involving a Vandermonde matrix and its inverse~\cite{csaki1974conversion}:
\begin{gather*}
 V A V^{-1} \ = \ \text{diag} { \left\{ -\alpha , -\beta \right\} } , \ \  \text{where}  \\
V \ =\ \begin{bmatrix}
          1 & - \alpha \\
					1 & - \beta
       \end{bmatrix}  ,
\quad
V^{-1} \ = \  {\frac{1}{\alpha - \beta}} 
      \begin{bmatrix}
          - \beta &  \alpha \\
					- 1     &  1
       \end{bmatrix} .
\end{gather*}
Then Equation~\eqref{eqn:evolutionLawForDistinctPoles} reduces to
\begin{align}
\begin{pmatrix}  p \\ q \end{pmatrix}
& =
			   A^{-1}B +  V^{-1} V  e^{A t }  V^{-1} V \left( \begin{pmatrix}  \xi \\  0 \end{pmatrix}   -  A^{-1} B \right)
			, \nonumber \\
& =
			   A^{-1}B +  V^{-1}  e^{ V  A  V^{-1} \, t }   V \left( \begin{pmatrix}  \xi \\  0 \end{pmatrix}   -  A^{-1} B \right)
		 , \nonumber \\			
& =
			   A^{-1}B +  V^{-1}  \text{diag} {\left\{ e^{-\alpha t} , e^{-\beta t} \right\}}  V \left( \begin{pmatrix}  \xi \\  0 \end{pmatrix}   -  A^{-1} B \right)
		, \nonumber \\
& =
			     \begin{pmatrix}
	               - {\frac{    \alpha + \beta }{  \alpha \beta }} \gamma - \kappa \\
		-   {\frac{  1 }{  \alpha \beta }} \gamma 
	\end{pmatrix} \nonumber \\
& \quad \quad	+     {\frac{  1 }{  \alpha  - \beta }}  
	                   \begin{pmatrix}  
									      \alpha \nu_{\beta}  \left(  \xi \right) e^{-\beta t}  - 
												  \beta \mu_{\alpha}  \left(  \xi \right) e^{-\alpha t} \\
												\nu_{\beta}  \left(  \xi \right) e^{-\beta t}  - 
													\mu_{\alpha}  \left(  \xi \right) e^{-\alpha t}
										\end{pmatrix} 		,				
\label{eqn:explicitRHSforDistinctPoles}
\end{align}
where~$ \mu_{\alpha} \left(  \xi \right) \triangleq  \xi + \kappa + \gamma / \alpha , $
 and~$ \nu_{\beta} \left(  \xi \right) \triangleq  \xi + \kappa + \gamma / \beta . $

\subsection{Expression for the derivative~${f_+}^{\prime}\left( \xi \right)$}
We can set~$t = \tau_+\left( \xi , 0 \right)$ to get $q = 0, $ and $p = 
\begin{pmatrix}  1 & 0 \end{pmatrix} \psi_+\left( \xi , 0 \right) .$ 
Thus the function~$ {f_+} $ is defined implicitly by Equation~\eqref{eqn:explicitRHSforDistinctPoles}, and in general we cannot get a closed form expression for it.

But we do get a closed form expression for the derivative of this function.
Indeed, by differentiating the component scalar equations in the vector Equation~\eqref{eqn:explicitRHSforDistinctPoles} we get:
\begin{align}
     {\frac{\partial{\tau_+( \xi , 0 ) }}{\partial\xi}} 
& =   
     {\frac
		       {  e^{{\vec{-}}\alpha \tau_+\left( \xi , 0 \right)}  -  e^{{\vec{-}}\beta \tau_+\left( \xi , 0 \right) } 
					 }
					 { \alpha \mu_{\alpha} \left(  \xi \right) e^{{\vec{-}}\alpha \tau_+\left( \xi , 0 \right)  } 
					     - \beta \nu_{\beta}  \left(  \xi \right)  e^{{\vec{-}}\beta \tau_+\left( \xi , 0 \right) } 
					 }
	   }  ,
\label{eqn:tauPrimeForDistinctPoles} \\
     {f_+}^{\prime}\left( \xi \right) 
& =  
     {\frac
		       { \alpha \mu_{\alpha} \left(  \xi \right)  - \beta \nu_{\beta}  \left(  \xi \right)  
					 }
					 { \alpha \mu_{\alpha} \left(  \xi \right) e^{{\vec{+}}\beta \tau_+\left( \xi , 0 \right)  } 
					     - \beta \nu_{\beta}  \left(  \xi \right)  e^{{\vec{+}}\alpha \tau_+\left( \xi , 0 \right) } 
					 }
	   }  .
\label{eqn:fPrimeForDistinctPoles}
\end{align}
\subsection{The derivative~${f_+}^{\prime}\left( \xi \right)$ has magnitude less than one}
We shall bound the magnitude of~${f_+}^{\prime}\left( \xi \right)$ by studying the properties of the functional form of~${f_+}^{\prime}\left( \cdot \right) .$ So let
\begin{align*}
   \phi \left( t \right) & \triangleq 
	 {\frac
		       { \alpha \mu_{\alpha}   - \beta \nu_{\beta}  
					 }
					 { \alpha \mu_{\alpha} e^{{\vec{+}}\beta t  } 
					     - \beta \nu_{\beta}   e^{{\vec{+}}\alpha t } 
					 }
	   } , \ \text{for} \ t \in \left[  0 , \tau_+  \right]
\end{align*}
where, for convenience we have abbreviated the symbols~$  \tau_+\left( \xi , 0 \right)  ,  \mu_{\alpha} \left( \xi , 0 \right)  ,  \nu_{\beta}  \left( \xi , 0 \right) $ as~$  \tau_+  ,  \mu_{\alpha}  ,  \nu_{\beta} $ respectively.  
\subsubsection{Relationship between the functions~$ q ( \cdot ) ,  \phi \left( \cdot \right) $}
Recall that~$q(t)$ is the $x_2$-coordinate as a function of time, as given by Equation~\eqref{eqn:explicitRHSforDistinctPoles}. 
We shall study the behaviour of the function~$q ( \cdot ) $  on~$ \left[ 0 , \tau_+ \right] $   together with the behaviour of the function~$\phi \left( \cdot \right)$ on the same interval. 
If we describe by~$\chi_1 , \chi_2 , \chi_3$ the terms that are independent of~$t$ in the equations for~$q(t)  , \phi (t) ,$ then:
\begin{align*}
q \left( t \right) & = \chi_1  + \chi_2  
                      \left(  
												  \nu_{\beta}   e^{{\vec{-}}\beta t } -  \mu_{\alpha} e^{{\vec{-}}\alpha t  }  
							       \right) \\										
1 / \phi \left( t \right) & = \left( 1 / \chi_3 \right)    
                      \left(  
												 \alpha \mu_{\alpha} e^{{\vec{+}}\beta t  } - \beta \nu_{\beta}   e^{{\vec{+}}\alpha t }  
							       \right) \\
							 & = \left( 1 / \chi_3 \right)  \times e^{\left( \alpha + \beta \right) t} \times {\frac{d}{dt} } 
                      \left(  
												 \nu_{\beta}   e^{{\vec{-}}\beta t } -  \mu_{\alpha} e^{{\vec{-}}\alpha t }  
							       \right) .
\end{align*}
This implies that~$ q^{\prime} \left( t \right) = 0 $ if and only if $ 1 / \phi \left( t \right) = 0  .$ The equation~$ q^{\prime} \left( t \right) = 0 $ has exactly one root because:
\begin{align*}
    q^{\prime} \left( t \right) / \chi_2 & = 
           \alpha \mu_{\alpha} e^{{\vec{-}}\alpha t }  - \beta \nu_{\beta}  e^{{\vec{-}}\beta t },  
\end{align*}
and
(a)~the function~$e^{-\alpha t}$ decays
faster than the function~$e^{-\beta t},$ and
(b)~the coeffecient~$\alpha \mu_{\alpha} $ is bigger than the
coeffecient~$\beta \nu_{\beta}.$  
Denote by~$\tau^*$ the common root of the equations:~$q^{\prime} \left( t \right) = 0  , \; 1 / \phi \left( t \right) = 0 .$
\subsubsection{The first exit time~$\tau_+$ is greater than~$\tau^*$, and $ \phi ( \cdot ) $ grows in magnitude on~$ \left[  \tau^* , \tau_+ \right]$}
We shall now study the rise and fall of~$q (t)$ over~$ \left[ 0 , \tau_+ \right] .$ Because of these four facts:
(i)~{$q \left( \cdot \right)$ is continuously differentiable on~$ \left[ 0 , + \infty \right] ,$ }
(ii)~{$q \left( 0 \right) = 0 ,$ and as~$t \to +\infty , q \left( t \right) \to {-\gamma}/{\alpha\beta} $,}
(iii)~{$q^{\prime} \left( 0 \right) = \alpha \mu_{\alpha}  - \beta \nu_{\beta} = \left( \alpha - \beta \right)  \left( \xi + \kappa \right) > 0 $, and,}
(iv)~{the only critical point of~$q \left( \cdot \right)$ is at the time~$\tau^*,$}
we can make the two inferences:
(a)~$ \left[ 0 , \tau^* \right] $ is an interval of ascent where the function~$q(\cdot )$ rises from~$q(0) = 0$ to its peak~$  q\left( \tau^* \right) ,$
(b)~$ \left[ \tau^* , +\infty \right] $ is an interval of descent where the function~$q(\cdot )$ falls from its peak~$  q\left( \tau^* \right) $ to~$ -\gamma/{\alpha\beta}$. 

Therefore there is exactly one positive time instant~(namely~$\tau_+$) when $q(\cdot )$ equals its initial value~$q(0) = 0.$ And the time instants~$ \tau^* , \tau_+$ must satisfy the order: 
\begin{align*}
\tau^* & < \tau_+ , \ \text{if} \ \xi > - \kappa .
\end{align*}
Note that the denominator of~$\phi\left( \cdot \right)$ has a magnitude that is an increasing function of~$t$ for~$t > \tau^* .$ Since the numerator of~$\phi\left( \cdot \right)$ is independent of~$t,$ it follows that~$\phi\left( \cdot \right)$ has a magnitude that is a decreasing function of~$t$ for~$t > \tau^* .$
\subsubsection{Growth of~$ q ( \cdot ) $ on~$ \left[ 0 , \tau^* \right] $ and on~$ \left[ \tau^* , \tau_+ \right] $}
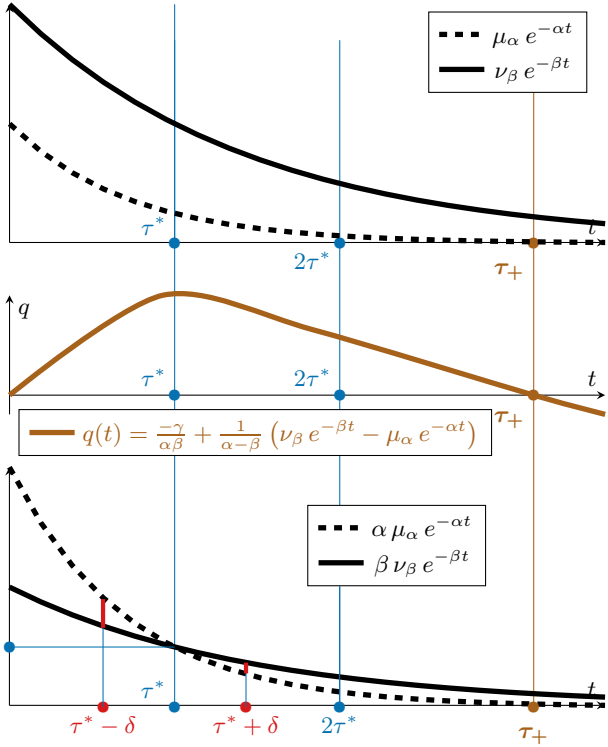
\begin{figure}
\begin{center}
\begin{tikzpicture}[scale=1]
     \begin{groupplot}[
            group style={  group size=1 by 3, vertical sep=20pt,
                           group name=H },
            xtick=\empty,ytick=\empty,
            width=9.5cm,height=4.75cm,
            axis lines=middle,
            clip=false,
            no markers,
            domain=-0.5:2.5,
            samples=20]
                                                            
      \nextgroupplot[  
            xlabel={$t$},
            legend pos=north east,]
				 \node[color=blueForRed1,] at (1.386,0) {$\vec{\bullet}$};
				\node[color=blueForRed1,,below left] at (1.386,0) {$2 {\tau}^*$};
						\draw[blueForRed1] (axis cs:0.693, 1.7) to (axis cs:0.693, -3.92); 
						\draw[blueForRed1] (axis cs:1.386, 1.7) to (axis cs:1.386, -3.92); 
						\draw[darkToffee] (axis cs:2.2, 1.7) to (axis cs:2.2, -3.92); 
          \node[color=blueForRed1,] at (0.693,0) {$\vec{\bullet}$};
				  \node[color=blueForRed1,,above left] at (0.693,0) {${\tau}^*$};
					\draw[blueForRed1] (axis cs:0.693, 2) to (axis cs:0.693, 0);
			  \node[color=darkToffee,] at (2.2,0) {$\vec{\bullet}$};
				\node[color=darkToffee,,below left] at (2.2,-0.1) {${\vec{{\tau}_+}}$};
            \addplot[domain=0:2.5,dashed,line width = 2,] {exp(-2*x)};
            \addplot[domain=0:2.5,line width = 2] {2*exp(-1*x)};
            \legend{%
						        {$ 
										\mu_{\alpha} \, e^{-\alpha t}$},
										{$ 
										\nu_{\beta} \, e^{-\beta t}$}
										};

      \nextgroupplot[  
        xlabel={$t$}, ylabel={$q$}, 
				y post scale = 0.5,
				legend style={at={(0.015,-0.35)},anchor=south west},
        ]
            \node[color=blueForRed1,] at (0.693,0) {$\vec{\bullet}$};
				    \node[color=blueForRed1,,above left] at (0.693,0) {${\tau}^*$};
				\node[color=blueForRed1,] at (1.386,0) {$\vec{\bullet}$};
				\node[color=blueForRed1,,above left] at (1.386,0) {$2 {\tau}^*$};
				    \addplot[ smooth, line width = 2, darkToffee,] coordinates {  (0,0) (0.643,1.3) (1.275,0.85) (2.2,0) (2.5,-0.25) };
 			    	  \node[color=darkToffee,] at (2.2,0) {$\vec{\bullet}$};
				      \node[color=darkToffee,,below left] at (2.2,-0.1) {${\vec{{\tau}_+}}$};
            \legend{\textcolor{darkToffee}{$ q(t) = {\frac{-\gamma}{\alpha\beta}} + {\frac{1}{\alpha - \beta}} \left(  \nu_{\beta} \, e^{-\beta t} -  \mu_{\alpha} \, e^{-\alpha t} \right)$}};

     \nextgroupplot[  
        xlabel={$t$},
        yticklabels={,,}, xticklabels={,,},
				legend style={at={(0.5,0.50)},anchor=south west},
        ]
				\addplot[domain=0:2.5,dashed,line width = 2,] {2*exp(-2*x)};
        \addplot[domain=0:2.5,line width = 2] {exp(-1*x)};
        \node[color=blueForRed1,] at (0.693,0) {$\vec{\bullet}$};
				\node[color=blueForRed1,,above left] at (0.693,0) {${\tau}^*$};
				    \draw[blueForRed1] (axis cs:0, 0.5) to (axis cs:0.693, 0.5);
			  \node[color=blueForRed1,] at (0,0.5) {$\vec{\bullet}$};
        %
				\node[color=red1,] at (0.393,0) {$\vec{\bullet}$};
				\node[color=red1,,below] at (0.393,0) {${\tau}^* - \delta$};
        \node[color=red1,] at (0.993,0) {$\vec{\bullet}$};
				\node[color=red1,,below] at (0.993,0) {${\tau}^* + \delta$};
						\draw[blueForRed1] (axis cs:0.393, 0) to (axis cs:0.393, 0.66);
					      \draw[red1, ultra thick] (axis cs:0.393, 0.66) to (axis cs:0.393, 0.9);
				    \draw[blueForRed1] (axis cs:0.993, 0) to (axis cs:0.993, 0.28);
								\draw[red1, ultra thick] (axis cs:0.993, 0.28) to (axis cs:0.993, 0.37);
        \node[color=blueForRed1,] at (1.386,0) {$\vec{\bullet}$};
				\node[color=blueForRed1,,below] at (1.386,0) {$2 {\tau}^*$};
				\node[color=darkToffee,] at (2.2,0) {$\vec{\bullet}$};
				\node[color=darkToffee,,below] at (2.2,-0.1) {${\vec{{\tau}_+}}$};
        \draw[blueForRed1] (axis cs:1.386, 1) to (axis cs:1.386, 0);
        \legend{$ 
				         \alpha\, \mu_{\alpha} \, e^{-\alpha t}$,
								$\beta\, \nu_{\beta} \, e^{-\beta t}$
								};
\end{groupplot}
\end{tikzpicture}
\end{center}
\label{fig:tauPlusBiggerThanTwiceTauStar}
\caption{Illustration of:~$2 \tau^* < \tau_+$ if $-\kappa < \xi.$}
\end{figure}
 We shall now compare the growth of~$q ( \cdot )$ on the two finite intervals:~$ 
 \left[ 0 , \tau^* \right] ,  \left[ \tau^* , \tau_+ \right] .$ 
Let the time~$\delta$ be chosen such that~$0 \le \delta \le \tau^* .$ Since
$ \alpha \mu_{\alpha} e^{{\vec{-}}\alpha \tau^*} = \beta \nu_{\beta} e^{{\vec{-}}\beta \tau^*} $ we get:
\begin{align*}
q^{\prime} \left( \tau^* - \delta \right) & = \alpha \mu_{\alpha} e^{{\vec{-}}\alpha \tau^*} \times \left( \gamma/\left(\alpha - \beta\right)\right) \times
               \left(  e^{{\vec{+}}\alpha \delta} - e^{{\vec{+}}\beta \delta}  \right) , \\
q^{\prime} \left( \tau^* + \delta \right) & = \alpha \mu_{\alpha} e^{{\vec{-}}\alpha \tau^*} \times \left( \gamma/\left(\alpha - \beta\right)\right) \times
               \left(  e^{{\vec{-}}\alpha \delta} - e^{{\vec{-}}\beta \delta}  \right) ,
\end{align*}
\begin{multline*}
q^{\prime} \left( \tau^* - \delta \right) +
   q^{\prime} \left( \tau^* + \delta \right)
   \\  = {\frac{2  \alpha\mu_{\alpha} \gamma e^{{\vec{-}}\alpha \tau^*} }{\alpha - \beta}} \left( \cosh{\alpha\delta} -  \cosh{\beta\delta} \right) , 
	\end{multline*}
\begin{multline*}
q^{\prime} \left( \tau^* - \delta \right) -
   q^{\prime} \left( \tau^* + \delta \right)
    \\  = {\frac{2  \alpha \mu_{\alpha} \gamma e^{{\vec{-}}\alpha \tau^*} }{\alpha - \beta}}  \left( \sinh{\alpha\delta} -  \sinh{\beta\delta} \right)  .
\end{multline*}
The right hand sides of both of the above equations are positive because~$ \alpha > \beta > 0 .$ 
And this implies that, for $0 \le \delta \le \tau^* ,$
\begin{align*}
    \left\lvert q^{\prime} \left( \tau^* - \delta \right) \right\rvert 
    & >
  \left\lvert q^{\prime} \left( \tau^* + \delta \right) \right\rvert .
\end{align*}
And this inequality in turn implies that the  rise in magnitude of~$q(\cdot )$ over the interval~$\left[ 0 , \tau^* \right] $ is greater than the fall in magnitude of~$q(\cdot )$ over the interval~$\left[ \tau^* , 2\tau^* \right] .$ Indeed,
\begin{align*}
q \left( \tau^* \right) & =  q \left( \tau^* \right) - 0 \; = \; q \left( \tau^* \right) - q(0) , \\
      & = \int_0^{\tau^*}{  {q}^{\prime} \left( \tau^* -\delta \right)  d{\delta}} 
      \; = \; \int_0^{\tau^*}{  \left\lvert{q}^{\prime} \left( \tau^* - \delta \right) \right\rvert d{\delta}} , \\
      & >  \int_0^{\tau^*}{  \left\lvert{q}^{\prime} \left( \tau^* + \delta \right) \right\rvert d{\delta}} 
      \; = \; \int_0^{\tau^*}{   - {q}^{\prime} \left( \tau^* +  \delta \right)  d{\delta}} , \\
      & = q \left( \tau^* \right) - q \left( 2 \tau^* \right) .
\end{align*}
Hence~$ q \left( 2 \tau^* \right) > 0.$ And this has the important consequence:
\begin{align}
  \tau_+ & > 2 \tau^* , \ \text{for} \ \xi > - \kappa . \label{eqn:tauPlusBiggerThanTwiceTauStar} 
\end{align}
\subsubsection{Growth of~$ \phi \left( \cdot \right) $ on~$ \left[ 0 , \tau^* \right] $ and on~$ \left[ \tau^* , \tau_+ \right] $
\label{section:paragraph:growthOfPhiDistinctPoles}}
\begin{figure}
\begin{center}
\begin{tikzpicture}[scale = 1.0]
 \begin{groupplot}[
            group style={  group size=1 by 2, vertical sep=20pt,
                           group name=I },
            xtick=\empty,ytick=\empty,
            width=9.5cm,height=4.75cm,
            axis lines=middle,
            clip=false,
            no markers,
            domain=-0.5:2.5,  
            samples=100]
						
		 \nextgroupplot[  
        xlabel={$t$}, 
				legend style={at={(0.01,0.94)},anchor=north west},
        ]      
            \addplot[domain=0:2.5, dashed, line width = 1.5] {0.7*exp(x)};
						\addlegendentry{$ 
						                 \alpha \; \mu_{\alpha}{\left( \xi \right)} \; e^{\beta t} $};
            \addplot[domain=0:1.56, line width = 1.5] {0.35*exp(2*x)};						
						\addlegendentry{$ 
						                 \beta \; \nu_{\beta}{\left( \xi\right)} \; e^{\alpha t} $};
\node[color=blueForRed1,scale=1]  at (axis cs:0.693, 0.3) {$\bullet$};
\node[color=blueForRed1,scale=1,below left]  at (axis cs:0.693, 0.3) {${\tau}^*$};
\node[color=blueForRed1,scale=1]  at (axis cs:0, 1.4) {$\bullet$};
%
\node[color=blueForRed1,scale=1]  at (axis cs:1.386, 0.3) {$\bullet$};
\node[color=blueForRed1,scale=1,below left]  at (axis cs:1.386, 0.3) {$2 {\tau}^*$};
\node[color=darkToffee,scale=1]  at (axis cs:2.2, 0.3) {$\bullet$};
\node[color=darkToffee,scale=1,below left]  at (axis cs:2.2, 0.2) {${\vec{{\tau}_+}}$};
					  	\draw[blueForRed1] (axis cs:0.693, 1.4) to (axis cs: 0.693,-4.0);
						  \draw[blueForRed1] (axis cs:0, 1.4) to (axis cs: 0.693,1.4);
							\draw[blueForRed1] (axis cs:1.386, 3.4) to (axis cs: 1.386,-4.0);
							\draw[darkToffee] (axis cs:2.2, 8.5) to (axis cs: 2.2,-4.0);
             	\draw[red1, ultra thick] (axis cs:0, 0.35) to (axis cs: 0,0.7);
             	\draw[red1, ultra thick] (axis cs:1.386, 2.8) to (axis cs: 1.386,5.6);
%

		 \nextgroupplot[  
        xlabel={$t$}, ylabel={$\phi$}, 
				y post scale = 0.5,
				legend style={at={(0.01,0.04)},anchor=north west},
        ]
				 \node[color=blueForRed1,] at (0,0.1) {$\vec{\bullet}$};
				    \node[color=blueForRed1,,left] at (0,0.1) {$1$};
						 \node[color=blueForRed1,] at (0,-0.1) {$\vec{\bullet}$};
				    \node[color=blueForRed1,,left] at (0,-0.1) {$-1$};
            \node[color=blueForRed1,] at (0.693,0) {$\vec{\bullet}$};
				    \node[color=blueForRed1,,below] at (0.693,0) {${\tau}^*$};
				\node[color=blueForRed1,] at (1.386,0) {$\vec{\bullet}$};
				\node[color=blueForRed1,,above left] at (1.386,0) {$2 {\tau}^*$};
												\addplot[blueForRed1,domain=0:2.5, forget plot]{0.1};
											  \addplot[blueForRed1,domain=0:2.5, forget plot]{-0.1};
						 \addplot[darkToffee, domain=0:0.57,line width = 2, forget plot] {0.1/(2*exp(x) - exp(2*x))};
						\addplot[darkToffee, domain=0.81:2.5,line width = 2,] {0.1/(2*exp(x) - exp(2*x))};
 			    	  \node[color=darkToffee,] at (2.2,0) {$\vec{\bullet}$};
				      \node[color=darkToffee,,below] at (2.2,0) {${\vec{{\tau}_+}}$};
            \legend{\textcolor{darkToffee}{$ \phi (t) = {\left( \alpha\mu_{\alpha} -  \beta\nu_{\beta} \right)} / \left(  \alpha \mu_{\alpha} \, e^{+\beta t} - \beta\nu_{\beta}  \, e^{+\alpha t} \right)$}};
						
			\end{groupplot}
\end{tikzpicture}
\end{center}
\label{fig:caseOneDenominatorOfLevelCrossing}
\caption{Illustration of: $ \left\lvert \phi \left( 2\tau^* \right) \right\rvert < \left\lvert \phi \left( \tau^* \right) \right\rvert $ if $- \kappa < \xi$}
\end{figure}
 We shall now do a similar comparison of the growth of~$\phi ( \cdot )$ on the two finite intervals:~$ 
 \left[ 0 , \tau^* \right] ,  \left[ \tau^* , \tau_+ \right] .$  
Let the time~$\delta$ be chosen such that~$0 \le \delta \le \tau^* .$ Since
$ \alpha \mu_{\alpha} e^{{\vec{+}}\beta \tau^*} = \beta \nu_{\beta} e^{{\vec{+}}\alpha \tau^*} $
\begin{align*}
\chi_3 / \phi \left( \tau^* - \delta \right) & = \alpha \mu_{\alpha} e^{{\vec{+}}\beta \tau^*} 
               \left(  e^{{\vec{-}}\alpha \delta} - e^{{\vec{-}}\beta \delta}  \right) , \\
\chi_3 / \phi \left( \tau^* + \delta \right) & = \alpha \mu_{\alpha} e^{{\vec{+}}\beta \tau^*} 
               \left(  e^{{\vec{+}}\alpha \delta} - e^{{\vec{+}}\beta \delta}  \right) ,
\end{align*}
\begin{multline*}
\chi_3 / \phi \left( \tau^* - \delta \right) +
   \chi_3 / \phi \left( \tau^* + \delta \right)
   \\  = {2  \alpha\mu_{\alpha} e^{{\vec{+}}\beta \tau^*} } \left( \cosh{\alpha\delta} -  \cosh{\beta\delta} \right) ,
\end{multline*}
\begin{multline*}
\chi_3 / \phi \left( \tau^* - \delta \right) -
   \chi_3 / \phi \left( \tau^* + \delta \right)
   \\  = { {\mathbf{-}} 2  \alpha\mu_{\alpha} e^{{\vec{+}}\beta \tau^*} } \left( \sinh{\alpha\delta} -  \sinh{\beta\delta} \right)  .
\end{multline*}
Because~$ \alpha > \beta > 0 ,$ the right hand side of the first equation is positive, but that of the second equation is negative. 
And this implies that~$ \chi_3 /  \left\lvert \phi \left( \tau^* - \delta \right) \right\rvert <
 \chi_3 /  \left\lvert \phi \left( \tau^* + \delta \right) \right\rvert $ for $0 \le \delta \le \tau^* .$ Setting~$\delta = \tau^*$ we get:
\begin{align*}
  \left\lvert \phi \left(  2\tau^*  \right) \right\rvert & < \left\lvert \phi \left(  0 \right) \right\rvert  \; = \; 1.
\end{align*}
And since the magnitude of~$\phi\left( \cdot\right)$ decreases monotonically on the interval~$ \left[ \tau^* , \tau_+ \right] ,$ we can now say that if~$\xi > -\kappa,$ then
\begin{align}
  \left\lvert {f_+}^{\prime} \left(  \xi  \right) \right\rvert & =  \left\lvert \phi \left(  \tau_+  \right) \right\rvert \; < \;
	\left\lvert \phi \left(  2\tau^*  \right) \right\rvert \; < \; 1 .
	\label{eqn:fPlusPrimeLessThanOne}
\end{align}
\subsubsection{On two different routes to prove contractiveness}
We have established that on the interval~$\left( - \kappa , + \infty \right)$ the derivative~$ {f_+}^{\prime}\left( \xi \right)  $ is negative, and that its magnitude is less than one. These properties by themselves are insufficient to prove that~$- f_+$ is a contraction mapping.   

For example the function~$  \kappa + \kappa e^{- \left( \xi -1 \right) / \kappa}  + \xi $ for positive values of~$\xi,$ is positive and motonically increasing. Its derivative has a magnitude less than one. Yet as seen in Figure~\ref{fig:notEnough} the graph of this function always lies above that of the function~$\xi,$ and never intersects the latter.
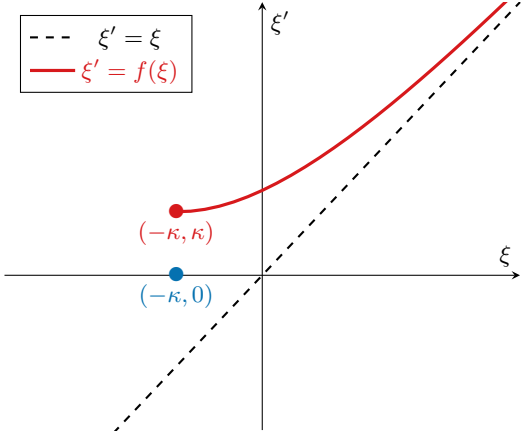
\begin{figure}
 \begin{center}
 \begin{tikzpicture} 
 \begin{axis}[
                    axis lines = middle, 
                    ticks = none,
                    xmin = -3, xmax = 3,
                    ymin = -1.7, ymax = 3,
                    xlabel=$\xi$,  ylabel=$\xi^{\prime}$,
                    legend pos = {north west},
         ]
                \addplot[color=black, dashed,thick, domain=-3:3]   {x};
                \addlegendentry{$ {\xi}^{\prime} =  \xi$}
                \addplot[color=red1, very thick, domain=-1:3]   {x + 2.8*exp(-1.1-0.6*x)};
                \addlegendentry[red1]{$ {\xi}^{\prime} = f ( \xi ) 
                $}
    \node[color=blueForRed1,scale=1.5]  at (axis cs:-1, 0) {$\bullet$};
    \node[color=blueForRed1,scale=1,below]  at (axis cs:-1, 0) {$(-\kappa,0)$};
    \node[color=red1,scale=1.5]  at (axis cs:-1, 0.69) {$\bullet$};
    \node[color=red1,scale=1,below]  at (axis cs:-1, 0.69) {$(-\kappa,\kappa)$};
\end{axis}
\end{tikzpicture} 
\end{center}
\caption{\label{fig:notEnough}Example of a monotonic function~$f$ that has a derivative whose magnitude is less than one, on the interval~$\left(  - \kappa , \infty \right) . $ Unless suitable further properties are known about~$f$, we cannot guarantee that a  fixed point exists for~$ f . $}
\end{figure}

To be a contraction mapping, a function needs two things: (i)~an interval that is mapped to within itself,  and (ii)~a Lipshitz constant that is less than one, at least on this interval. 
Equation~\eqref{eqn:fPlusPrimeLessThanOne} certifies that the Lipshitz constant of~$ - f_+$ is less than one over any subset of the positive real axis.

    The geometry of the phase flow had revealed a boundedness~\eqref{eqn:setInclusionFromPhasePortrait}. This helps to make~$ - f_+$ a contraction mapping. Given any~$\xi_0 \ge 0,$ we get the following set-inclusion relation:
\begin{align*}
    - {f_+} \bigl( \left[ 0, \, \max{\left\{ \xi_0 , \kappa + \gamma / \beta \right\}} \right] 
          \bigr)
    & \subseteq \left[ 0, \, \max{\left\{ \xi, \kappa + \gamma / \beta \right\} } \right] .
\end{align*}%
Thus given any non-negative initial value~$\xi_0 , $ we can pick an interval over which $ - f_+$~is a contraction mapping. And this is sufficient to apply Theorem~\ref{theorem:selfOscillationsForSecondOrderPlants}.
Another geometric proof is given in Appendix~\ref{sec:geometricProof}. 

But we shall also give a different way using analytic properties of~$f_+$ to prove that~$- f_+$ is a contraction map. We do this for two reasons: (i)~along the way, we prove that~$f_+$ is strictly convex at any nonnegative value of its argument, which is interesting to know, and, (ii)~in the case where the plant has complex conjugate poles, the phase portrait does not reveal any set-inclusion properties of~$f_+ . $ 
\subsection{The function~${f_+}\left( \xi \right)$ is strictly convex}
To show that~$ {f_+} \left(  \cdot  \right)$ is strictly convex on the interval~$ \left( -\kappa + \gamma/\beta , + \infty \right)$  it is enough to show that~${f_+}^{\prime \prime} \left(  \cdot  \right) > 0$ on this interval. 
Using Equations~\eqref{eqn:tauPrimeForDistinctPoles},~\eqref{eqn:fPrimeForDistinctPoles} we get:
\begin{align}
     {f_+}^{\prime \prime}\left( \xi \right) 
& =  
     {\frac
		       { G_0   - G_{\text{plus}}   e^{{\vec{+}}\left( \alpha - \beta \right) \tau_+ }
					        -  G_{\text{minus}}  e^{{\vec{-}}\left( \alpha - \beta \right) \tau_+ }
					 }
					 {   
					      e^{ {\vec{-}}\left( \alpha + \beta \right) \tau_+ } \;
							 {  \bigl( \alpha \mu_{\alpha} \ e^{{\vec{+}}\beta \tau_+  } 
					        - \beta \nu_{\beta}   e^{{\vec{+}}\alpha \tau_+ } 
							   \bigr) }^3
					 }
	   }  ,
\label{eqn:fDoublePrimeForDistinctPoles}
\end{align}
where the coefficients of the numerator are given by:
\begin{gather*} 
G_0 \, = \, 2 \left( \alpha - \beta \right) \left( \alpha \beta  {\left( \xi + \kappa \right)}^2 - \gamma^2 \right) , \\
G_{\text{plus}} \, =  \, \left( \alpha - \beta \right) \left( \alpha \beta  {\left( \xi + \kappa \right)}^2 +  \gamma \left( \alpha - \beta \right) {\left( \xi + \kappa \right)} - \gamma^2  \right) , \\
G_{\text{minus}} \, =  \,  \left( \alpha - \beta \right) \left(\alpha \beta  {\left( \xi + \kappa \right)}^2 - \gamma  \left( \alpha - \beta \right) {\left( \xi + \kappa \right)} - \gamma^2 \right)
.
\end{gather*}
The denominator in the RHS of Equation~\eqref{eqn:fDoublePrimeForDistinctPoles} is negative, because~$ \tau^* < \tau_+ .$ We shall show that the numerator too is negative, by studying its functional form. So let
\begin{align*}
 g\left(X\right)
&  \triangleq 
{ G_0  \, -  \, G_{\text{plus}}   X
					       \, -  \, G_{\text{minus}} / X ,
					 }  \quad \text{for} \ X \in \left( 0 ,  + \infty \right) .
\end{align*}
The function~$g\left(\cdot\right)$ is continuously differentiable on~$\left( 0 , + \infty \right) .$ On this interval it has exactly one critical point given by:
\begin{align*}
- G_{\text{plus}}  + G_{\text{minus}} / {\left({X^*}\right)}^2  = 0 \quad \implies \quad {X^*} = \sqrt{ G_{\text{minus}} /  G_{\text{plus}}   } .
\end{align*}
and~$g\left( \cdot \right)$ is monotonically decreasing on~$\left( X^* , + \infty \right) ,$ because~$g^{\prime}{\left( X \right)} < 0 $ 
for~$ X > X^*.$

Note that (a)~$ G_0 , G_{\text{plus}} , G_{\text{minus}} > 0 $ if~$\xi > -\kappa + \gamma/\beta ,$ and
(b)~$ G_{\text{plus}} - G_{\text{minus}} =
2 \gamma {\left( \xi + \kappa \right)} {\left( \alpha - \beta \right)}^2   
 > 0 .$ Hence the critical point~$  \sqrt{ G_{\text{minus}} /  G_{\text{plus}} }  < 1. $ Hence~$  g\left(X\right) < g\left( 1 \right) , $ if~$  X > 1.$
But~$  g\left( 1 \right) = 0. $ Hence~$  
g\left(X\right)  < 0 , \ \text{if} \  X > 1 , $ and thus for every~$ \xi > -\kappa +\gamma/\beta  $ we have:
\begin{align*}
  {f_+}^{\prime \prime}\left( \xi \right)  & =  
{\frac {  g\left( e^{ + \left( \alpha - \beta \right) \tau_+ } \right) } 
       {  e^{ {\vec{-}}\left( \alpha + \beta \right) \tau_+ } \;
							 {  \bigl( \alpha \mu_{\alpha} \ e^{{\vec{+}}\beta \tau_+  } 
					        - \beta \nu_{\beta}   e^{{\vec{+}}\alpha \tau_+ } 
							   \bigr) }^3
                              }
												}
											\; > \; 0 .
\end{align*}

\subsection{An interval mapped to within itself by~${\mathbf{-}}{f_+}\left( \cdot \right)$%
\label{section:subsub:assumptionA4distinctRealPoles}}
Since~$ f_+\left( \cdot \right) $ is strictly convex on~$\left( -\kappa + \gamma/\beta , +\infty\right)  , $
its graph on this interval lies above or on the tangent drawn at any point on this interval.
Fix~${\xi}^{\text{cvx}} \in \left( -\kappa + \gamma/\beta , +\infty\right)  . $
Then for every~$ \xi \in \left( -\kappa + \gamma/\beta , +\infty\right)$:
\begin{align*}
     {f_+} \left( \xi \right) 
& \ge  {f_+}^{\prime}\left( {\xi}^{\text{cvx}} \right) \bigl( \xi - {\xi}^{\text{cvx}} \bigr) + {f_+} \left( {\xi}^{\text{cvx}}\right) .
\end{align*}
The two straight lines in $ ( \xi , {\widetilde{\xi}} ) $-space given by:
\begin{align*}
     {\widetilde{\xi}} 
& =  {f_+}^{\prime}\left( {\xi}^{\text{cvx}} \right) \bigl( \xi - {\xi}^{\text{cvx}} \bigr) 
+  {f_+}\left( {\xi}^{\text{cvx}} \right) ,
 \\ 
 {\widetilde{\xi}}  & = -\xi ,
\end{align*}
have exactly one intersection point because they have different slopes. Let this intersection happen at~$ \xi = {\xi}^{\text{ixn}}  .$
Then at any~%
$ \xi \ge \max{\left\{ {\xi}^{\text{cvx}} ,  {\xi}^{\text{ixn}}  \right\}}$
we can use convexity to get:
\begin{align*}
     {f_+} \left( \xi \right) 
& \ge  {f_+}^{\prime}\left( {\xi}^{\text{cvx}} \right) \bigl( \xi - {\xi}^{\text{cvx}} \bigr) + {f_+} \left( {\xi}^{\text{cvx}}\right) , \\
& =  {f_+}^{\prime}\left( {\xi}^{\text{cvx}} \right) \bigl( \xi - {\xi}^{\text{cvx}} \bigr) - {\xi}^{\text{ixn}} +
   {f_+}^{\prime}\left( {\xi}^{\text{cvx}} \right) \bigl( {\xi}^{\text{cvx}} - {\xi}^{\text{ixn}} \bigr)
 , \\
& = {f_+}^{\prime}\left( {\xi}^{\text{cvx}} \right) \bigl( \xi - {\xi}^{\text{ixn}} \bigr) - {\xi}^{\text{ixn}} 
 \\
& \ge - \bigl( \xi - {\xi}^{\text{ixn}} \bigr)  - {\xi}^{\text{ixn}}  
\\
& \ge - \xi .
\end{align*}
Thus we have the set inclusion relation:
\begin{align}
    - {f_+} \left( \left[ 0, \xi \right] 
          \right)
    & \subseteq \left[ 0, \xi \right] , \quad \text{if} \  \xi \ge 
 \eta \triangleq   \max{ \left\{ 
    {\xi}^{\text{cvx}} 
    , {\xi}^{\text{ixn}} \right\}} .
\label{eqn:assumptionA3forDistinctPoles}
\end{align}%
\subsection{\label{section:fPrimeIsContinuous}{The derivative~${f_+}^{\prime}\left( \cdot \right)$ is continuous on~$ \left[ 0 , + \infty \right)  $}}
The first exit time~$\tau_+\bigl( {\left(\xi \; 0\right)}^T  \bigr) $ is implicitly defined by Equation~\eqref{eqn:explicitRHSforDistinctPoles}. Both the LHS and the RHS of this equation are continuously differentiable in~$ \xi .$ The implicit function theorem states that the implicitly defined function inherits the differentiability properties of the defining functions. Thus if~$\tau_+\bigl( {\left( \xi ,  0 \right)}^T \bigr)$ is defined at some~$ \xi $, then there is a small neighbourhood including~$\xi$ where~$\tau_+\bigl( {\left(\xi  \; 0\right)}^T \bigr)$ is also continuously differentiable in~$ \xi .$

The derivative~${f_+}^{\prime}\left( \cdot \right)$ is a fraction whose numerator and denominator are continuously differentiable  functions of~$ \xi .$ And the derivative is defined everywhere on the interval~$\left[ 0 , + \infty \right) . $ Hence the derivative is continuous  on the interval~$\left[ 0 , + \infty \right) . $

\subsection{The function~${f_+}\left( \xi \right)$ is a contraction mapping
\label{section:subsub:contractionMappingDistinctRelalPoles}}

The derivative~$ f_+^{\prime}\left( \xi \right) $ has a magnitude that is a decreasing function on~$\left( -\kappa + \gamma/\beta , + \infty \right) $ because
\begin{align*}
{\frac{d}{d\xi}}{{\left( {f_+}^{\prime}\left( \xi \right)   \right)}^2} & =  2 {f_+}^{\prime}\left( \xi \right)
                                {f_+}^{\prime\prime}\left( \xi \right) , \ \text{and for} \ \xi > -\kappa + \gamma/\beta , \\
       & < 0 , \quad \text{as} \ f_+ \ \text{is strictly convex, and} \ {f_+}^{\prime} < 0 .  
\end{align*}
Thus~$ - {f_+}^{\prime}\left( -\kappa + \gamma/\beta \right)  $ is a tight upper bound for the magnitude of~$ {f_+}^{\prime}\left( \cdot \right)  $ on the
semi-infinite interval~$\left[ -\kappa + \gamma/\beta , + \infty \right) .$

We now find a tight upper bound on the finite interval~$\left[ 0 , -\kappa + \gamma/\beta \right] .$ The function~$ {f_+}^{\prime}\left( \xi \right) $ is continuous on this interval. By the Weierstrass theorem on extreme values of continuous functions, it follows that the magnitude of~$ {f_+}^{\prime}\left( \cdot \right)  $ attains its maximum on this closed and bounded interval. Denote this maximum value by~${\overline{\lambda}} . $  

Since~${\overline{\lambda}} \ge  - {f_+}^{\prime}\left( -\kappa + \gamma/\beta \right) , $ it follows that~${\overline{\lambda}}$  is also an upper bound for the magnitude of~$ {f_+}^{\prime}\left( \cdot \right)  $ on the
semi-infinite interval~$\left[  0 , + \infty \right] .$ Clearly
\begin{align*}
  {\overline{\lambda}} & < 1 , \ \text{because} \ \left\lvert {f_+}^{\prime}\left( \xi \right) \right\rvert < 1,
         \ \text{for every} \ \xi \in \left( -\kappa , + \infty \right) .
\end{align*}
And so we have the following useful inequality:
\begin{align}
 \left\lvert {f_+}^{\prime}\left( \xi \right) \right\rvert & \le {\overline{\lambda}} \; < \; 1,  \ \text{for every} \ \xi > 0 .
\label{eqn:lambdaForFprime}
\end{align}
Then for any two nonegative~$\xi ,{\xi}^{\prime} $ we get:
\begin{align}
   \left\lvert f_+\left( \xi \right) - f_+\left( {\xi}^{\prime}  \right) \right\rvert 
& = \left\lvert   \int_{\min{ \left\{ \xi ,{\xi}^{\prime} \right\}}}^{\max{ \left\{ \xi ,{\xi}^{\prime} \right\}}}{
f_+^{\prime}\left( {\breve{\xi}} \right) d{\breve{\xi}}} \right\rvert , \nonumber \\
& \le \int_{\min{ \left\{ \xi ,{\xi}^{\prime} \right\}}}^{\max{ \left\{ \xi ,{\xi}^{\prime} \right\}}}{
 \left\lvert   f_+^{\prime}\left( {\breve{\xi}} \right) \right\rvert d{\breve{\xi}}} \nonumber \\
& \le \int_{\min{ \left\{ \xi ,{\xi}^{\prime} \right\}}}^{\max{ \left\{ \xi ,{\xi}^{\prime} \right\}}}{
 {\overline{\lambda}} d{\breve{\xi}}} , \nonumber \\
& \le \ {\overline{\lambda}} \left\lvert \xi - {\xi}^{\prime} \right\rvert  .
\label{eqn:contractionMappingPropertyForDistinctPoles}
\end{align}
    Hence by~\eqref{eqn:assumptionA3forDistinctPoles}, \eqref{eqn:contractionMappingPropertyForDistinctPoles} the function
$f_+(\cdot )$  is a contraction mapping over any interval of the form~$\left[ 0 , \theta \right]$ with $ \theta \ge \eta .$%

\section{Case of a stable, repeated, real pole\label{section:secondOrder:RepeatedRealPole}}
We carry out an entirely similar set of calculations for the case where the transfer function takes the form:
\begin{gather*}
    {\frac{ - \kappa s + \gamma}{ {\left(s + \alpha \right) }^2}},
\end{gather*}
where the real parameters~$\kappa, \gamma, \alpha$ are all positive. Because the calculations are similar, the presentation below is somewhat abbreviated when compared to the case of distinct real poles.

The observer realization takes the form: 
\begin{gather*}
{\dot{x}} \ = \ A x + B u, \quad
y \ = \ Cx, \ \text{where,} \\
    A = \begin{bmatrix}
        0 & - \alpha^2 \\
        1 & - 2 \alpha  
        \end{bmatrix},
    \quad
    B = \begin{bmatrix}
        \gamma \\ -\kappa 
        \end{bmatrix}, \quad
    C = \begin{bmatrix}
        \ 0 & 1 \
        \end{bmatrix}.
\end{gather*}
\subsection{The trajectory over the duration of a first exit time}
Consider the starting point~$ \left( \xi_0 , \; 0 \right) $ where $\xi > -\kappa . $ Let~$ \left( p , q \right) $ denote the state of the RFS after a duration of~$t$~seconds. 
\subsubsection{Matrix exponential via the Jordan canonical form}
Since~$A$ is a companion matrix with repeated eigenvalues, we get its Jordan canonical form via a similarity transformation involving a confluent Vandermonde matrix and its inverse~\cite{csaki1974conversion}:
\begin{gather*}
 {\widetilde{V}} A {\widetilde{V}}^{-1} \ = \ \begin{bmatrix}
           - \alpha & 0\\
					 1  &  - \alpha
       \end{bmatrix}  , \ \  \text{where}  \\
{\widetilde{V}} \ =\ \begin{bmatrix}
          1 & - \alpha \\
					0 & 1
       \end{bmatrix}  ,
\quad
{\widetilde{V}}^{-1} \ = \ 
      \begin{bmatrix}
          1 &  \alpha \\
					0     &  1
       \end{bmatrix} .
\end{gather*}
The matrix exponential of~$ {\widetilde{V}} A {\widetilde{V}}^{-1}  \; t $ can be computed by expressing it as a sum of two commuting matrices, as below:
\begin{align*}
e^{   \begin{bmatrix}
           - \alpha & 0\\
					 1        &  - \alpha
       \end{bmatrix}        t
	} 
& =  
e^{  \left\{
  -\alpha t \begin{bmatrix}
           1  & 0 \\
					 0  &  1
       \end{bmatrix} 
+  t
    \begin{bmatrix}
           0  & 0  \\
					 1  & 0
       \end{bmatrix} 
			\right\}
	} ,  \\
	& =  
\begin{bmatrix}
           e^{  -\alpha t }  & 0 \\
					 0  &   e^{  -\alpha t }
       \end{bmatrix} 
\times 
 \begin{bmatrix}
           1  & 0  \\
					 t  & 1
       \end{bmatrix}  ,  \\
& =	\begin{bmatrix}
           e^{  -\alpha t }     &   0  \\
					 t  e^{  -\alpha t }  &   e^{  -\alpha t }
       \end{bmatrix} .
\end{align*}
Then for time~$t$ such that $0 \le t \le \tau_+\left( \xi , 0 \right)  $:
\begin{align}
\begin{pmatrix}  p \\ q \end{pmatrix}
& =
			   A^{-1}B +  {\widetilde{V}}^{-1}  e^{{\widetilde{V}}  A  {\widetilde{V}}^{-1} \, t }   {\widetilde{V}} \left( \begin{pmatrix}  \xi \\  0 \end{pmatrix}   -  A^{-1} B \right)
		 , \nonumber \\
& =
	- \begin{pmatrix}
	                {\frac{  2 }{  \alpha  }} \gamma + \kappa \\
		  {\frac{  1 }{  \alpha^2}} \gamma 
	\end{pmatrix} +
	                   \begin{pmatrix}  
									    \bigl[   
											      {\widetilde{\mu}}_{\alpha}   +  \gamma / \alpha  
												    + \alpha {\widetilde{\mu}}_{\alpha} \,  t \bigr]  e^{-\alpha t} \\
											\bigl[	{\widetilde{\mu}}_{\alpha} \, t 
													 +  
													     \gamma / \alpha^2  
															\bigr]   e^{-\alpha t}
										\end{pmatrix} 		,			
\label{eqn:explicitRHSforRepeatedPoles}
\end{align}
where~$ {\widetilde{\mu}}_{\alpha}\left(  \xi \right) \triangleq  \xi + \kappa + \gamma / \alpha . $

\subsection{Expression for the derivative~${f_+}^{\prime}\left( \xi \right)$}
Differentiating the component scalar equations in~\eqref{eqn:explicitRHSforRepeatedPoles} gives:
\begin{align}
     {\frac{\partial{\tau_+( \xi , 0 ) }}{\partial\xi}} 
& =  - 
     {\frac
		       {  \tau_+ }
					 { \xi + \kappa  
					     - \alpha \tau_+  \left(  \xi + \kappa + \gamma / \alpha  \right)  
					 }
	   }  ,
\label{eqn:tauPrimeForRepeatedPoles} \\
     {f_+}^{\prime}\left( \xi \right) 
& =  
     {\frac
		       {  e^{ - \alpha \tau_+ } \;  \left(  \xi + \kappa  \right)  
					 }
					 { \xi + \kappa   
					     - \alpha \tau_+  \left(  \xi + \kappa + \gamma / \alpha  \right)  
					 }
	   } ,
\label{eqn:fPrimeForRepeatedPoles}
\end{align}
where we have abbreviated the symbol~$  \tau_+\left( \xi , 0 \right) $ as~$  \tau_+ . $
\subsection{The derivative~${f_+}^{\prime}\left( \xi \right)$ has magnitude less than one}
We shall bound the magnitude of~${f_+}^{\prime}\left( \xi \right)$ by studying the properties of the functional form of~${f_+}^{\prime}\left( \cdot \right) .$ So let
\begin{align*}
   {\widetilde{\phi}} \left( t \right) & \triangleq 
     {\frac
		       {  e^{ - \alpha t } \;  \left(  \xi + \kappa  \right)  
					 }
					 { \xi + \kappa   
					     - \alpha t  \left(  \xi + \kappa + \gamma / \alpha  \right)  
					 }
	   }  , \ \text{for} \ t \in \left[  0 , \tau_+  \right] .
\end{align*}
\subsubsection{Relationship between the functions~$ q ( \cdot ) ,  {\widetilde{\phi}} \left( \cdot \right) $}
Consider the behaviour of the function~$q ( \cdot ) $  on~$ \left[ 0 , \tau_+ \right] $   together with the behaviour of the
function~${\widetilde{\phi}}  \left( \cdot \right)$ on the same interval. We have:
\begin{align*}
q \left( t \right) & = -{ \frac{\gamma}{{\alpha}^2} } + \bigl[ 
											      \gamma / \alpha^2  
												    +   \left( 
											      \xi + \kappa  + \gamma / \alpha  \right) \,  t \bigr]  e^{-\alpha t} , \\										
\left( \xi + \kappa \right)  / {\widetilde{\phi}}  \left( t \right) & =  
                      \bigl[  
												\xi + \kappa  - \alpha t  \left( \xi + \kappa + \gamma / \alpha \right)   
							       \bigr]  e^{{\vec{+}}\alpha t } ,  \\
							 & = 
							           {\frac{d}{dt}} 
											\left\{ \bigl[
											       \gamma / \alpha^2  
												    +  \left( 
											      \xi + \kappa  + \gamma / \alpha  \right) \,  t \bigr]  e^{-\alpha t}  \right\} .
\end{align*}
This implies that~$ q^{\prime} \left( t \right) = 0 $ if and only if $ 1 / {\widetilde{\phi}} \left( t \right) = 0  .$ The equation~$ q^{\prime} \left( t \right) = 0 $ has exactly one root because ~$ q^{\prime} \left( t \right) $ is a product of two factors, one exponential in~$t$ and another linear in~$t$ - the root comes from the linear factor.
Denote by~$\tau^*$ the common root of the equations:~$q^{\prime} \left( t \right) = 0  , \; 1 /{\widetilde{\phi}}\left( t \right) = 0 .$
\subsubsection{The first exit time~$\tau_+$ is greater than~$\tau^*$, and $ \phi ( \cdot ) $ grows in magnitude on~$ \left[  \tau^* , \tau_+ \right]$}
We shall now study the rise and fall of~$q (t)$ over~$ \left[ 0 , \tau_+ \right] .$ Because of these four facts:
(i)~{$q \left( \cdot \right)$ is continuously differentiable on~$ \left[ 0 , + \infty \right] ,$ }
(ii)~{$q \left( 0 \right) = 0 ,$ and as~$t \to +\infty , q \left( t \right) \to {-\gamma}/{\alpha\beta} $,}
(iii)~{$q^{\prime} \left( 0 \right) =  \xi + \kappa > 0 $, and,}
(iv)~{the only critical point of~$q \left( \cdot \right)$ is at the time~$\tau^*,$}
we can make the two inferences:
(a)~the interval~$ \left[ 0 , \tau^* \right] $ is an interval of ascent where the function~$q(\cdot )$ rises from~$q(0) = 0$ to its peak~$  q\left( \tau^* \right) ,$
(b)~the interval~$ \left[ \tau^* , +\infty \right] $ is an interval of descent where the function~$q(\cdot )$ falls from its peak~$  q\left( \tau^* \right) $ to~$ -\gamma/{\alpha\beta}$. 

Therefore there is exactly one positive time instant~(namely~$\tau_+$) when $q(\cdot )$ equals its initial value~$q(0) = 0.$ And the time instant~$\tau_+$ must satisfy: 
\begin{align*}
\tau^* & < \tau_+ , \ \text{if} \ \xi > - \kappa .
\end{align*}
Note that the denominator of~${\widetilde{\phi}}\left( \cdot \right)$ has a magnitude that is an increasing function of~$t$ for~$t > \tau^* .$ Since the numerator of~${\widetilde{\phi}}\left( \cdot \right)$ is positive, and decreasing in~$t,$ it follows that~$\phi\left( \cdot \right)$ has a magnitude that is a decreasing function of~$t$ for~$t > \tau^* .$
\subsubsection{Growth of~$ q ( \cdot ) $ on~$ \left[ 0 , \tau^* \right] $ and on~$ \left[ \tau^* , \tau_+ \right] $
\label{section:paragraph:growthOfQDistinctPoles}}
We shall now compare the growth of~$q ( \cdot )$ on the two finite intervals:~$ 
 \left[ 0 , \tau^* \right] ,  \left[ \tau^* , \tau_+ \right] .$ 
Let the time~$\delta$ be chosen such that~$0 \le \delta \le \tau^* .$ 
Since~$ \xi + \kappa  = \alpha  \left( \xi + \kappa + \gamma / \alpha \right) \tau^* , $ we get:
\begin{align*}
 q^{\prime} \left( \tau^* - \delta \right)
 & = + {\vec{e^{+\alpha \delta}}} \times \alpha \delta \left( \xi + \kappa + \gamma / \alpha \right)  e^{-\alpha \tau^*}  , \\
q^{\prime} \left( \tau^* + \delta \right) 
& =  - {\vec{e^{-\alpha \delta}}}  \times \alpha \delta \left( \xi + \kappa + \gamma / \alpha \right) e^{-\alpha \tau^*}  .
\end{align*}
Clearly, $ \left\lvert  q^{\prime} \left( \tau^* - \delta \right)   \right\rvert
         > \left\lvert  q^{\prime} \left( \tau^* + \delta \right)  \right\rvert $ 
if~$ 0 \le \delta \le \tau^* .$  The derivative~$ q^{\prime} \left( \cdot \right) $ is positive over the
 interval~$\left[ 0 , \tau^* \right) ,$ and is negative over the interval~$\left[ \tau^* , +\infty \right) . $
In other words, the derivative changes sign only at the point~$\tau^* . $ Therefore we can infer that the magnitude of rise in the value of the function over the interval~$\left[ 0 , \tau^* \right) $ is greater than the magnitude of fall over the interval~$\left[ \tau^* , 2\tau^*\right) . $ Indeed, just at we derived in the last part of Section~\ref{section:paragraph:growthOfQDistinctPoles},
\begin{align*}
q \left( \tau^* \right)
      & = \int_0^{\tau^*}{  {q}^{\prime} \left( \tau^* -\delta \right)  d{\delta}} 
      \; = \; \int_0^{\tau^*}{  \left\lvert{q}^{\prime} \left( \tau^* - \delta \right) \right\rvert d{\delta}} , \\
      & >  \int_0^{\tau^*}{  \left\lvert{q}^{\prime} \left( \tau^* + \delta \right) \right\rvert d{\delta}} 
      \; = \; \int_0^{\tau^*}{   - {q}^{\prime} \left( \tau^* +  \delta \right)  d{\delta}} , \\
      & = q \left( \tau^* \right) - q \left( 2 \tau^* \right) .
\end{align*}
Hence~$ q \left( 2 \tau^* \right) > 0.$ And this has the important consequence:
\begin{align}
  \tau_+ & > 2 \tau^* , \ \text{for} \ \xi > - \kappa . \label{eqn:tauPlusBiggerThanTwiceTauStarRepeatedRoot} 
\end{align}
\subsubsection{Growth of~$ {\widetilde{\phi}} \left( \cdot \right) $ on~$ \left[ 0 , \tau^* \right] $ and on~$ \left[ \tau^* , \tau_+ \right] $}
Just as we did in Section~\ref{section:paragraph:growthOfPhiDistinctPoles} we shall now 
compare the growth of~${\widetilde{\phi}} ( \cdot )$ on the two finite intervals:~$ 
 \left[ 0 , \tau^* \right] ,  \left[ \tau^* , \tau_+ \right] .$  
Let the time~$\delta$ be chosen such that~$0 \le \delta \le \tau^* .$
Since~$ \xi + \kappa  = \alpha  \left( \xi + \kappa + \gamma / \alpha \right) \tau^* , $ we get:
\begin{align*}
\left( \xi + \kappa \right)  / {\widetilde{\phi}}  \left(  \tau^* - \delta \right) & = 
 & = + e^{{\vec{-}}\alpha \delta} \times \alpha \delta \left( \xi + \kappa + \gamma / \alpha \right)  e^{\alpha \tau^*}  , \\
\left( \xi + \kappa \right)  / {\widetilde{\phi}}  \left(  \tau^* - \delta \right) & = 
 & = - e^{{\vec{+}}\alpha \delta} \times \alpha \delta \left( \xi + \kappa + \gamma / \alpha \right)  e^{\alpha \tau^*}  . \\
\end{align*}
Clearly, $ \left\lvert  q^{\prime} \left( \tau^* - \delta \right)   \right\rvert
         < \left\lvert  q^{\prime} \left( \tau^* + \delta \right)  \right\rvert $ 
if~$ 0 \le \delta \le \tau^* .$  And so~$    \lvert \left( \xi + \kappa \right)  / {\widetilde{\phi}}  \left(  2\tau^* \right)   \rvert >  \lvert \left( \xi + \kappa \right)  / {\widetilde{\phi}}  \left(  0 \right)  \rvert = \xi + \kappa .  $
Hence~$  {\widetilde{\phi}}  \left(  2\tau^* \right)  < 1. $ And since the magnitude
of~${\widetilde{\phi}}  \left(  \cdot \right)  $ is a decreasing function on~$ \left[  \tau^* , + \infty\right)  $ we get:
\begin{align}
   \left\lvert {f_+}^{\prime} \left(  \xi  \right) \right\rvert & =  
   \left\lvert	{\widetilde{\phi}}  \left(  \tau_+ \right) \right\rvert \; <  \;   \left\lvert {\widetilde{\phi}}  \left(  2\tau^* \right)  \right\rvert \; < \; 1 .
\label{eqn:fPlusPrimeLessThanOneRepeatedPoles}
\end{align}
\subsection{The function~${f_+}\left( \xi \right)$ is strictly convex on a semi-infinite interval}
To show that~$ {f_+} \left(  \cdot  \right)$ is strictly convex on the interval~$ \left( -\kappa + \gamma/\beta , + \infty \right)$  it is enough to show that~${f_+}^{\prime \prime} \left(  \cdot  \right) > 0$ on this interval. 
Using Equations~\eqref{eqn:tauPrimeForRepeatedPoles},~\eqref{eqn:fPrimeForRepeatedPoles} we get:
\begin{align}
     {f_+}^{\prime\prime}\left( \xi \right) 
& =  
     {\frac
		       {    
					              - \alpha^2  \left( {\left(  \xi + \kappa \right)}^2 {\vec{-}} \gamma^2 / \alpha^2  \right)  {\tau_+}^2
							       	-
					                2 \gamma   \left(  \xi + \kappa  \right) \tau_+
					 }
					 {  e^{ + \alpha \tau_+ } \; {\Bigl( \xi + \kappa   
					     - \alpha \tau_+  \left(  \xi + \kappa + \gamma / \alpha  \right) 
							\Bigr)}^3
					 }
	   } ,\label{eqn:fDoublePrimeForRepeatedPoles}
\end{align}
If~$ \xi + \kappa > \gamma / \alpha  , $ then~$   {f_+}^{\prime\prime}\left( \xi \right) > 0 .$
\subsection{The derivative~${f_+}^{\prime}\left( \cdot \right)$ is continuous on~$ \left[ 0 , + \infty \right)  $}
Exactly like in the case of distinct real poles~(Section~\ref{section:fPrimeIsContinuous}), in this case too is~${f_+}^{\prime}\left( \cdot \right)$ a continuous function on~$ \left[ 0 , + \infty \right) . $
\subsection{Contraction mapping property}

Sections~\ref{section:subsub:contractionMappingDistinctRelalPoles} and \ref{section:subsub:assumptionA4distinctRealPoles} give arguments showing that the contraction mapping property and assumption A~4 hold in the case where the plant has distinct real poles. Those arguments depend on the convexity of~$f_+\left( \cdot \right)$ on a semi-infinite interval that extends to plus infinity.

Those arguments also work in the case where the plant has a repeated real pole, because we have proved  the convexity of~$f_+\left( \cdot \right)$ on the semi-infinite interval~$\left( -\kappa + \gamma/\alpha , + \infty \right) . $  



\section{Case of a complex conjugate pair of poles\label{section:secondOrder:complexConjugatePoles}}
Consider the case where the transfer function takes the form:
\begin{gather*}
    {\frac{ - \kappa s + \gamma}{ {\left(s + \sigma \right)}^2 + \omega^2}},
\end{gather*}
where the real parameters~$\kappa, \gamma, \eta, \omega$ are all positive.

The observer realization takes the form: 
\begin{gather*}
{\dot{x}} \ = \ A x + B u, \quad
y \ = \ Cx, \ \text{where,} \\
    A = \begin{bmatrix}
        0 & - \sigma^2 - \omega^2 \\
        1 & - 2 \sigma  
        \end{bmatrix},
    \quad
    B = \begin{bmatrix}
        \gamma \\ -\kappa 
        \end{bmatrix}, \quad
    C = \begin{bmatrix}
        \ 0 & 1 \
        \end{bmatrix}.
\end{gather*}
\subsection{The trajectory over the duration of a first exit time}
Consider the starting point~$ \left( \xi_0 , \; 0 \right) $ where $\xi > -\kappa . $ Let~$ \left( p , q \right) $ denote the state of the RFS after a duration of~$t$~seconds. 
\subsubsection{Matrix exponential via the Jordan canonical form}
We have the following similarity transformation: 
\begin{gather*}
 {{T}} A {{T}}^{-1} \ =  \ \begin{bmatrix}
           - \sigma & -\omega\\
					 \omega  &  - \sigma
       \end{bmatrix}  , \ \  \text{where}  \\
{{T}} \ = \  {\frac{1}{\sqrt{\omega}}} 
       \begin{bmatrix}
          1 & - \sigma \\
					0 & \omega
       \end{bmatrix}  ,
\quad
{{T}}^{-1} \ = \  {\frac{1}{\sqrt{\omega}}} 
       \begin{bmatrix}
          \omega &  \sigma \\
					0 & 1
       \end{bmatrix}  .
\end{gather*}
The matrix exponential of~$ {{T}} A {{T}}^{-1}  \; t $ can be computed by expressing it as a sum of two commuting matrices, as below:
\begin{align*}
e^{   \begin{bmatrix}
           - \sigma & -\omega\\
					 \omega  &  - \sigma
       \end{bmatrix}         t
	} 
& =  
e^{  \left\{
   -\sigma t \begin{bmatrix}
           1  & 0 \\
					 0  &  1
       \end{bmatrix} 
+  t
    \begin{bmatrix}
           0  & - \omega  \\
					 \omega  & 0
       \end{bmatrix}
			\right\}
	} ,  \\
	& =  
\begin{bmatrix}
           e^{  -\sigma t }  & 0 \\
					 0  &   e^{  -\sigma t }
       \end{bmatrix} 
\times 
 \begin{bmatrix}
           \cos{\omega t}  &  - \sin{\omega t}  \\
					  \sin{\omega t}  &  \cos{\omega t}
       \end{bmatrix}  ,  \\
& =	  e^{  -\sigma t } \begin{bmatrix}
           \cos{\omega t}  &  - \sin{\omega t}  \\
					  \sin{\omega t}  &  \cos{\omega t}
       \end{bmatrix}  .		
\end{align*}
Then for time~$t$ such that $0 \le t \le \tau_+\left( \xi , 0 \right)  $:
\begin{align}
\begin{pmatrix}  p \\ q \end{pmatrix}
& = 
		   A^{-1}B +  T^{-1}  e^{ T  A  T^{-1} \, t }   T \left( \begin{pmatrix}  \xi \\  0 \end{pmatrix}   -  A^{-1} B \right)
		 , \nonumber \\
& =
	- \begin{pmatrix}
	                {\frac{  2 \sigma }{  \sigma^2 + \omega^2  }} \gamma + \kappa \\
		  {\frac{  1 }{ \sigma^2 + \omega^2 }} \gamma 
	\end{pmatrix}  +  \quad {\frac{e^{-\sigma t}}{\omega}} \times  \nonumber \\
& \ \
	                   \begin{pmatrix}  
									     { \left( 
											       \omega {\mu}_{1}   +  \sigma \nu_0  \right) } \cos{\omega t}
														+  { \left( 
											       \sigma {\mu}_{1}   -  \omega \nu_0  \right) } \sin{\omega t}
										    \\
											 \nu_0  \cos{\omega t} +  {\mu}_{1} \sin{\omega t} 
										\end{pmatrix} 		,
\label{eqn:explicitRHSforComplexConjugatePoles}
\end{align}
where~$ {\mu}_{1} \left(  \xi \right) \triangleq  \xi + \kappa + \gamma \sigma /{ \left( \sigma^2 + \omega^2 \right) } , $
 and~$ \nu_{0}  \triangleq   \gamma \omega /{ \left( \sigma^2 + \omega^2 \right) } . $ These~$ {\mu}_{1} , \nu_{0}  $ should  not
 be confused with the~$ \mu_{\alpha}, \nu_{\beta} $  from Section~\ref{section:secondOrder:twoRealPoles}.

\subsection{Expression for the derivative~${f_+}^{\prime}\left( \xi \right)$}
Differentiating the component scalar equations in~\eqref{eqn:explicitRHSforRepeatedPoles} gives:
\begin{align}
     {\frac{\partial{\tau_+
		}}{\partial\xi}} 
& =  
     {\frac
		       {  - \sin{\omega \tau_+} }
					 {  { \left( 
											       \omega {\mu}_{1}   -  \sigma \nu_0  \right) } \cos{\omega \tau_+}
														-  { \left( 
											       \sigma {\mu}_{1}   +  \omega \nu_0  \right) } \sin{\omega \tau_+}
					 }
	   }  ,
\label{eqn:tauPrimeForComplexConjugatePoles} \\
      {\frac{d{f_+
		}}{d{\xi}}}  
& =  
     {\frac
		       {   { \left( 
											       \omega {\mu}_{1}   -  \sigma \nu_0  \right) } e^{-\sigma \tau_+} }
					 {  { \left( 
											       \omega {\mu}_{1}   -  \sigma \nu_0  \right) } \cos{\omega \tau_+}
														-  { \left( 
											       \sigma {\mu}_{1}   +  \omega \nu_0  \right) } \sin{\omega \tau_+}
					 }
	   }  ,
\label{eqn:fPrimeForComplexConjugatePoles}
\end{align}
where, for convenience we have abbreviated the symbol~$  \tau_+\left( \xi , 0 \right) $ as~$  \tau_+ , $
and the  symbol~$ f_+( \xi , 0 ) $ as~$ f_+ . $
\subsection{The derivative~${f_+}^{\prime}\left( \xi \right)$ has magnitude less than one}
We shall bound the magnitude of~${f_+}^{\prime}\left( \xi \right)$ by studying the properties of the functional form of~${f_+}^{\prime}\left( \cdot \right) .$ For~$t \in \left[  0 , \tau_+  \right]$ let
\begin{align*}
   {\widehat{\phi}} \left( t \right) & \triangleq 
    {\frac
		       {   { \left( 
											       \omega {\mu}_{1}   -  \sigma \nu_0  \right) } e^{-\sigma t} }
					 {  { \left( 
											       \omega {\mu}_{1}   -  \sigma \nu_0  \right) } \cos{\omega t}
														-  { \left( 
											       \sigma {\mu}_{1}   +  \omega \nu_0  \right) } \sin{\omega t}
					 }
	   }  .
\end{align*}
\subsubsection{Relationship between the functions~$ q ( \cdot ) ,  {\widehat{\phi}} \left( \cdot \right) $}
Consider the behaviour of the function~$q ( \cdot ) $  on~$ \left[ 0 , \tau_+ \right] $   together with the behaviour of the
function~${\widehat{\phi}}  \left( \cdot \right)$ on the same interval. We have:
\begin{align*}
q \left( t \right) 
& = - {\frac{  \nu_0 }{  \omega }}  +
                         {\frac{e^{-\sigma t}}{\omega}}
                     \bigl[ 
											     \nu_0  \cos{\omega t} +  {\mu}_{1} \sin{\omega t} 
											\bigr]  . 
\end{align*}
First we jot down a relation to be used in Section~\ref{section:paragraph:growthOfReciprocalOfPhiHat}:
\begin{multline}
{\frac{d}{dt}}{\Bigl\{ { \left( \omega {\mu}_{1}   -  \sigma \nu_0  \right) }  / {\widehat{\phi}}  \left( t \right)  \Bigr\}}
\\
\begin{aligned}
& = {\frac{d}{dt}}  {\Bigl\{  
                         \left( \omega {\mu}_{1}   -  \sigma \nu_0  \right)   e^{\sigma t} \cos{\omega t}  
                    +    \left( \sigma {\mu}_{1}   +  \omega \nu_0  \right)   e^{\sigma t} \sin{\omega t} 
                     \Bigr\}} ,
  \\
& =       -   \left( \sigma^2 + \omega^2  \right)   e^{\sigma t} \,
                     \bigl[ 
											     \nu_0  \cos{\omega t} +  {\mu}_{1} \sin{\omega t} 
											\bigr] ,
	 \\
& =      -   \left( \sigma^2 + \omega^2  \right)   e^{ 2 \sigma t} \bigl( \omega \; q(t) + \nu_0 \bigr) .
\end{aligned}
\label{eqn:timeDerivativeOfReciprocalOfPhi}
\end{multline}
Next we find a relationship which we use to establish that~$1/{\widehat{\phi}}  \left( \cdot \right) $ vanishes with the derivative~$q^{\prime}{\left( t \right)} $:
\begin{multline*}
{ \left( \omega {\mu}_{1}   -  \sigma \nu_0  \right) } e^{-\sigma t} / {\widehat{\phi}}  \left( t \right) 
\\
\begin{aligned}
& =  
   { \left(    \omega {\mu}_{1}   -  \sigma \nu_0  \right) } \cos{\omega t} 
-  { \left(    \sigma {\mu}_{1}   +  \omega \nu_0  \right) } \sin{\omega t} , \\
& = 
							       \omega  {e^{{\vec{+}}\sigma t}}  \times  {\frac{d}{dt}} 
											\bigl\{ q(t) \bigr\} .
\end{aligned}
\end{multline*}
This implies that~$ q^{\prime} \left( t \right) = 0 $ if an only if $ 1 / {\widehat{\phi}} \left( t \right) = 0  .$ Since~$ q^{\prime} \left( t \right) $ is a product of two factors, one exponential in~$t$ and another trigonometric in~$t$, it follows that the roots come from the trigonometric factor.
Denote by~$\tau^*$ the smallest positive common root of the equations:~$q^{\prime} \left( t \right) = 0  , \; 1 /{\widehat{\phi}}\left( t \right) = 0 .$
\subsubsection{$ q^{\prime}( \cdot ) $ has exactly one root in~$\left[ 0 , \pi / \omega\right] $}
The derivative~$q^{\prime} \left( t \right) $ vanishes if and only if
\begin{align*} 
	 { \left(    \omega {\mu}_{1}   -  \sigma \nu_0  \right) } \cos{\omega t}
	& = 
  { \left(    \sigma {\mu}_{1}   +  \omega \nu_0  \right) } \sin{\omega t} .
\end{align*}
Now we show that the last equation has positive coefficients for both~$ \cos{\omega t} $ and~$ \sin{\omega t} . $
It is clear that~$\nu_0 > 0 .$ For every~$\xi > - \kappa ,$ we have: $\mu_1 > 0 , $ and~$ \omega {\mu}_{1}   -  \sigma \nu_0  = \omega \left(  \xi + \kappa \right) > 0 . $ Hence it follows that
\begin{align*}
		\omega {\mu}_{1}   -  \sigma \nu_0 
	, \,
		\sigma {\mu}_{1}   +  \omega \nu_0 
& > 0 , \ \forall \xi > - \kappa .
\end{align*}
On the interval~$ \left[ 0 , \pi / \left( 2 \omega \right) \right] $ the function~$ \cos{\omega t} $ falls monotonically from~1~to~0,
while  the function~$ \sin{\omega t} $ rises monotonically from~0~to~1. Hence there is exactly one time instant~$t$ from~$ \left[ 0 , \pi / \left( 2 \omega \right) \right] $ such that
\begin{align*}
	 { \left(    \omega {\mu}_{1}   -  \sigma \nu_0  \right) } \cos{\omega t} 
	& = 
  { \left(    \sigma {\mu}_{1}   +  \omega \nu_0  \right) } \sin{\omega t} .
\end{align*}
On the interval~$ \left[ \pi / \left( 2 \omega \right)  , \pi /  \omega \right] $ the functions~$  \cos{\omega t}  , \sin{\omega t} $ have opposite signs. Hence there is no time instant~$t$ from~$ \left[ \pi / \left( 2 \omega \right)  , \pi /  \omega \right] $ where~$  q^{\prime}( t )  $ vanishes.

Thus there is exactly one time instant~(namely $\tau^*$) from the interval~$ \left[ 0 , \pi /  \omega  \right] $ where~$  q^{\prime}( t )  $ vanishes.
\subsubsection{The first exit time~$\tau_+$ is greater than~$\tau^*$}
We shall now study the rise and fall of~$q (t)$ over~$ \left[ 0 , \tau_+ \right] .$ Because of these four facts:
(i)~{$q \left( \cdot \right)$ is continuously differentiable on~$ \left[ 0 , + \infty \right] ,$ }
(ii)~{$q \left( 0 \right) = 0 ,$ and , $q \left( \pi /  \left( 2 \omega \right)   \right) = {\vec{-}} {\left( 1 + e^{-\sigma\pi/\omega}\right)} \nu_0/{ \omega } < 0 $,}
(iii)~{$q^{\prime} \left( 0 \right) =  \xi + \kappa > 0 $, and,}
(iv)~{the only critical point of~$q \left( \cdot \right)$ is at the time~$\tau^*,$}
we can make these two inferences:
(a)~the interval~$ \left[ 0 , \tau^* \right] $ is an interval of ascent where the function~$q(\cdot )$ rises from~$q(0) = 0$ to its peak~$  q\left( \tau^* \right) ,$
(b)~the interval~$ \left[ \tau^* , \pi /  \omega  \right] $ is an interval of descent where~$q(\cdot )$ falls from~$  q\left( \tau^* \right) $ to~${\vec{-}} {\left( 1 + e^{-\sigma\pi/\omega}\right)} \nu_0/{ \omega }$. 

Therefore there is exactly one positive time instant~(namely~$\tau_+$) when $q(\cdot )$ equals its initial value~$q(0) = 0.$ And the time instant~$\tau_+$ must satisfy: 
\begin{align*}
\tau^* & < \tau_+  \; < \; \pi /  \omega  , \ \text{if} \ \xi > - \kappa .
\end{align*}
\subsubsection{Growth of~$ q ( \cdot ) $ on~$ \left[ 0 , \tau^* \right] $ and on~$ \left[ \tau^* ,  \pi /  \omega  \right] $
\label{section:paragraph:growthOfQcomplexConjugatePoles}}
We shall now compare the growth of~$q ( \cdot )$ on the two finite intervals:~$ 
 \left[ 0 , \tau^* \right] ,  \left[ \tau^* , \tau_+ \right] .$ 
\begin{align*}
   q^{\prime} \left( t \right)
& =
  {\frac{e^{-\sigma t}}{\omega}}
	\bigl[
  { \left(    \omega {\mu}_{1}   -  \sigma \nu_0  \right) } \cos{\omega t} 
-  { \left(    \sigma {\mu}_{1}   +  \omega \nu_0  \right) } \sin{\omega t} 
  \bigr] , \\
& =    \chi_4
			{\frac{e^{-\sigma t}}{\omega}}
			\cos{ \left( \omega t + \arccos{\frac{  \omega {\mu}_{1}   -  \sigma \nu_0  }{\chi_4}}         
						 \right)  
					} ,
\end{align*}
where~$ \chi_4 \triangleq  \sqrt{  { \left(    \omega {\mu}_{1}   -  \sigma \nu_0  \right) }^2 +
              { \left(    \sigma {\mu}_{1}   +  \omega \nu_0  \right) }^2  
					 } . $
The cosine function is an odd function w.r.t. any of its roots. That is to say:
\begin{align*}
  \cos{\left( t^* + t \right)} & = - \cos{\left( t^* - t \right)}, \ \forall t , \ \text{if} \ \cos{t^*} = 0 .
\end{align*}

\begin{figure}
\begin{center}
\begin{tikzpicture}[scale = 1.0]
 \begin{axis}[
            xtick=\empty,ytick=\empty,
            axis lines=middle,
            no markers, 
            xmin = -0.1, xmax=0.8, ymin = -0.28, ymax = 0.28,
            samples=200,  xlabel={$t$},  x post scale = 1.15, 
				    legend style={at={(0.5,1.01)},anchor=south},
						]    
				    \addplot[domain=-0.1:0.8, black!40, smooth, forget plot] {0.25*cos(4*x*180*7/22 + 15) };
            \addplot[domain=0:0.65, line width = 1.5, smooth] {0.25*cos(4*x*180*7/22 + 15) };
						\addlegendentry{$ {\frac{\chi_4}{\omega}}
			              \cos{ \left( \omega t + \arccos{\frac{  \omega {\mu}_{1}   -  \sigma \nu_0  }{\chi_4}} \right) } $};
            \addplot[domain=0:0.65, darkToffee, line width = 1] {(exp(-2*x))*0.25*cos(4*x*180*7/22 + 15) };					
					  \addlegendentry{\textcolor{darkToffee}{$ e^{-\sigma t} {\frac{\chi_4}{\omega}}
			              \cos{ \left( \omega t + \arccos{\frac{  \omega {\mu}_{1}   -  \sigma \nu_0  }{\chi_4}} \right) } $}};
\node[color=blueForRed1,scale=1]             at (axis cs:0.325, 0) {$\bullet$};
\node[color=blueForRed1,scale=1,below left]  at (axis cs:0.325, 0) {${\tau}^* $};
\node[color=red1,scale=1]               at (axis cs:0.2, 0) {$\bullet$};
\node[color=red1,scale=1,above left]    at (axis cs:0.2, 0) {${\tau}^* - \delta$};
\node[color=red1,scale=1]               at (axis cs:0.48, 0) {$\bullet$};
\node[color=red1,scale=1,above ]        at (axis cs:0.48, 0) {${\tau}^* + \delta$};
\node[color=blueForRed1,scale=1]        at (axis cs:0.65, 0) {$\bullet$};
\node[color=blueForRed1,scale=1,below]  at (axis cs:0.65, 0) {$2 {\tau}^*$};
             	\draw[red1, ultra thick] (axis cs:0.2, 0) to (axis cs: 0.2,0.085);
            	\draw[red1, ultra thick] (axis cs:0.48, 0) to (axis cs: 0.48,-0.055);
     \end{axis}
\end{tikzpicture}
\end{center}
\label{fig:caseThreeDenominatorOfLevelCrossing}
\caption{For~$\delta\in\left[ 0 , \tau^* \right]$: $ \omega \left\lvert q^{\prime} \left( {\tau}^* - \delta \right)  \right\rvert > \omega \left\lvert q^{\prime} \left( {\tau}^* + \delta \right) \right\rvert $.} 
\end{figure}
Let the time~$\delta$ be chosen such that~$0 \le \delta \le \tau^* .$ 
Since
\begin{align*}
  \cos{ \left( \omega \tau^* + \arccos{\frac{  \omega {\mu}_{1}   -  \sigma \nu_0  }{\chi_4}}         
						 \right)  
					} 
& = 0, \ \text{we get:}
\end{align*}
\begin{multline}
  \cos{ \left( {\vec{+ \omega\delta}} \; + \; \omega \tau^* + \arccos{\frac{  \omega {\mu}_{1}   -  \sigma \nu_0  }{\chi_4}}         
						 \right)  
					} 
\\ = 
 -  \cos{ \left( {\vec{- \omega\delta}} \; + \; \omega \tau^* + \arccos{\frac{  \omega {\mu}_{1}   -  \sigma \nu_0  }{\chi_4}}         
						 \right)  
					} .
\label{eqn:oddSymmetryOfCosineComplexConjugatePoles}
\end{multline}
Hence we can write:
\begin{multline*}
 \left\lvert q^{\prime} \left( \tau^* - \delta \right) \right\rvert \, e^{\alpha \tau^*} \omega /\chi_4   \\
\begin{aligned}
 & = {\vec{e^{+ \sigma \delta}}}  \cos{ \left( {\vec{- \omega\delta}} \; + \; \omega \tau^* + \arccos{\frac{  \omega {\mu}_{1}   -  \sigma \nu_0  } {\chi_4}}         
						 \right) }  , \\
& >  {\vec{e^{- \sigma \delta}}}   \cos{ \left( {\vec{- \omega\delta}} \; + \; \omega \tau^* + \arccos{\frac{  \omega {\mu}_{1}   -  \sigma \nu_0  }{\chi_4}}         
						 \right) }   , \\
& = e^{{\vec{-}}\sigma \delta}
    \left\lvert \cos{ \left( {\vec{+ \omega\delta}} \; + \; \omega \tau^* + \arccos{\frac{  \omega {\mu}_{1}   -  \sigma \nu_0  }{\chi_4}}         
		\right) } \right\rvert  , \\ 
& =  \left\lvert q^{\prime} \left( \tau^* + \delta \right) \right\rvert \,  e^{\alpha \tau^*} \omega /\chi_4 .
\end{aligned}
\end{multline*}
Clearly, $ \left\lvert  q^{\prime} \left( \tau^* - \delta \right)   \right\rvert
        > \left\lvert  q^{\prime} \left( \tau^* + \delta \right)  \right\rvert $ 
if~$ 0 \le \delta \le \tau^* .$ 
The derivative~$ q^{\prime} \left( \cdot \right) $ is positive over the
 interval~$\left[ 0 , \tau^* \right) ,$ and is negative over the interval~$\left[ \tau^* , \pi /  \omega \right) . $
In other words, the derivative changes sign only at the point~$\tau^* . $ Therefore we can infer that the magnitude of rise in the value of the function over the interval~$\left[ 0 , \tau^* \right) $ is greater than the magnitude of fall over the interval~$\left[ \tau^* , 2\tau^*\right) . $ Indeed, just as in Section~\ref{section:paragraph:growthOfQDistinctPoles},
\begin{align*}
     q \left( \tau^* \right) - q (0) 
& > 
     q \left( \tau^* \right) - q \left( 2 \tau^* \right) .
\end{align*}
Hence~$ q \left( 2 \tau^* \right) > 0.$ And this has the important consequence:
\begin{align}
  \tau_+ & > 2 \tau^* , \ \text{for} \ \xi > - \kappa . \label{eqn:tauPlusBiggerThanTwiceTauStarComplexConjugatePoles} 
\end{align}
\subsubsection{Fall of the magnitude of~$ {\widehat{\phi}} \left( \cdot \right) $ on~$ \left[ 0 , \tau^* \right] $ and its rise on~$ \left[ \tau^* , 2\tau^* \right] $}
Just as we did in Section~\ref{section:paragraph:growthOfPhiDistinctPoles} we shall now 
compare the growth of~${\widehat{\phi}} ( \cdot )$ on the two finite intervals:~$ 
 \left[ 0 , \tau^* \right] ,  \left[ \tau^* , \tau_+ \right] .$  
Let the time~$\delta$ be chosen such that~$0 \le \delta \le \tau^* . $
Then because of the odd symmetry~\eqref{eqn:oddSymmetryOfCosineComplexConjugatePoles} we can write:
\begin{multline*}
       { \left( \omega {\mu}_{1}   -  \sigma \nu_0  \right) } e^{-\sigma \tau^*} 
    /  { \left\{ \chi_4  \left\lvert   {\widehat{\phi}}\left( t - \delta \right)  \right\rvert  \right\} }
\\
\begin{aligned}
 & = {\vec{e^{- \sigma \delta}}}  \cos{ \left( {\vec{- \omega\delta}} \; + \; \omega \tau^* + \arccos{\frac{  \omega {\mu}_{1}   -  \sigma \nu_0  } {\chi_4}}         
						 \right) }  , \\
& <  {\vec{e^{+ \sigma \delta}}}   \cos{ \left( {\vec{- \omega\delta}} \; + \; \omega \tau^* + \arccos{\frac{  \omega {\mu}_{1}   -  \sigma \nu_0  }{\chi_4}}         
						 \right) }   , \\
& = e^{{\vec{+}}\sigma \delta} 
    \left\rvert   \cos{ \left( {\vec{+ \omega\delta}} \; + \; \omega \tau^* + \arccos{\frac{  \omega {\mu}_{1}   -  \sigma \nu_0  }{\chi_4}}         
						 \right) } \right\lvert   , \\ 
& =       { \left( \omega {\mu}_{1}   -  \sigma \nu_0  \right) } e^{-\sigma \tau^*} 
    /  { \left\{ \chi_4  \left\lvert   {\widehat{\phi}}\left( t  + \delta \right)  \right\rvert  \right\} }
.
\end{aligned}
\end{multline*}
Clearly, 
for every~$\delta$ such that $ 0 \le \delta \le \tau^* ,$ 
\begin{align*}
    \left\lvert  {\widehat{\phi}}\left( \tau^* - \delta \right)   \right\rvert
      &  > \left\lvert  {\widehat{\phi}}\left( \tau^* + \delta \right)  \right\rvert ,
\end{align*}
And by setting~$ \delta = \tau^*$ we get:
\begin{align*}
    \left\lvert  {\widehat{\phi}}\left( 0 \right)   \right\rvert
    &    > \left\lvert  {\widehat{\phi}}\left( 2 \tau^* \right)  \right\rvert . 
\end{align*}
\subsubsection{The magnitude of $ {\widehat{\phi}} ( \cdot ) $ falls on~$ \left( \tau^* , \tau_+ \right)$%
\label{section:paragraph:growthOfReciprocalOfPhiHat}}
On the interval~$ \left(  \tau^* , \tau_+ \right)$, the function~$q\left( \cdot \right)$ is positive; in fact it is greater than~$\nu_0 / \omega . $ We use the positive sign of~$ { \left( \omega {\mu}_{1}   -  \sigma \nu_0  \right) } ,$ the positive sign of~$ q\left( \cdot \right)$ on~$ \left[  \tau^* , \tau_+ \right) $, the negative sign of~$ {\widehat{\phi}}  \left( \cdot \right)  $ on the same interval,
and the relation~\eqref{eqn:timeDerivativeOfReciprocalOfPhi} to show
that~$ 1 / \phi ( \cdot ) $ grows in magnitude on~$ \left(  \tau^* , \tau_+ \right] $:
\begin{multline*}
   {\frac{d}{dt}}\Biggl( {\Bigl\{ { \left( \omega {\mu}_{1}   -  \sigma \nu_0  \right) } / {\widehat{\phi}}  \left( t \right)  \Bigr\}}^2 \Biggr)
\\
\begin{aligned}
& = 
   2 {\Bigl\{ { \left( \omega {\mu}_{1}   -  \sigma \nu_0  \right) } / {\widehat{\phi}}  \left( t \right)  \Bigr\}}
   {\frac{d}{dt}}{\Bigl\{ { \left( \omega {\mu}_{1}   -  \sigma \nu_0  \right) } / {\widehat{\phi}}  \left( t \right)  \Bigr\}}^2 ,
	\\
& =
   -  2 \left( \sigma^2 + \omega^2  \right)   e^{ 2 \sigma t} \bigl( \omega \; q(t) + \nu_0 \bigr)
	    {\Bigl\{ { \left( \omega {\mu}_{1}   -  \sigma \nu_0  \right) } / {\widehat{\phi}}  \left( t \right)  \Bigr\}} , 
	\\
& > 0  , \qquad \text{if} \ t \in \left(  \tau^* , \tau_+ \right) .
\end{aligned}
\end{multline*}
Hence we conclude that~$ \left\lvert {\widehat{\phi}}\left( \tau_+ \right) \right\rvert 
  < \left\lvert {\widehat{\phi}}\left( 2 \tau^*  \right)  \right\rvert  . $
\subsubsection{The derivative~${f_+}^{\prime}\left( \xi \right)$ is less than one if~$\xi > - \kappa $}
From the preceding, it follows that:
\begin{align}
   \left\lvert {f_+}^{\prime} \left(  \xi  \right) \right\rvert & =  
   \left\lvert	{\widehat{\phi}}  \left(  \tau_+ \right) \right\rvert \; <  \;   \left\lvert {\widehat{\phi}}  \left(  2\tau^* \right)  \right\rvert \; < \; 1 .
\label{eqn:fPlusPrimeLessThanOneComplexConjugatePoles}
\end{align}
\subsection{An interval mapped to within itself by~${\mathbf{-}}{f_+}\left(  \cdot \right)$}%
We show that for all large enough~$ \xi , $ we get~$ 
\left\lvert  f_+\left( \xi \right) / \xi  \right \rvert < 1
  . $ For this purpose, we shall exactly compute the limits of~$\tau_+ \left( \xi \right), \; \left\lvert  f_+\left( \xi \right) / \xi  \right \rvert $ as~$\xi \to + \infty . $
\subsubsection{As~$\xi $ is increased towards $  \infty ,$ the switching time~$ \tau_+ $ increases towards a finite limit}
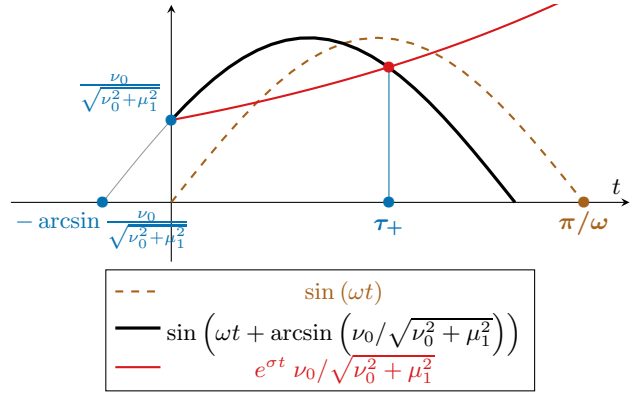
\begin{figure}[b]
\begin{center}
\begin{tikzpicture}
      \begin{axis}[ axis lines = middle, ticks = none, xmin = -70, xmax = 200, ymin = -0.36, ymax = 1.2,
        xlabel={$t$}, 
				x post scale = 1.2,  y post scale = 0.6, 
			 legend style ={at={(0.5,-0.03)},anchor = north},
				]
				    \addplot[domain=0:180, darkToffee, dashed, thick] {sin(x)};
						\addlegendentry[darkToffee]{{$\sin{ \left( \omega t \right) }$}};
            \addplot[domain=-30:0, black!40, forget plot] {sin(x+30)};
					  \addplot[domain=0:150, black, very thick] {sin(x+30)};
						\addlegendentry{$\sin{ \left( \omega t + \arcsin{ \left( {\nu_0} / {\sqrt{\nu_0^2 + \mu_1^2}} \right) }\right) }$};
            \addplot[domain=0:180,red1, thick] {0.5*exp(0.3*x*22/7/180};
						\addlegendentry[red1]{${  e^{\sigma t}  \; {\nu_0} / {\sqrt{\nu_0^2 + \mu_1^2}} }  $};
						\addplot[blueForRed1, thin]coordinates{ (95,0) (95,0.82) };
%
\node[color=blueForRed1,scale=1]  at (axis cs:-30, 0) {$\vec{\bullet}$};
\node[color=blueForRed1,scale=1,below]  at (axis cs:-30, 0) {$ - \arcsin{ \frac{\nu_0}{\sqrt{\nu_0^2 + \mu_1^2}} }$};
\node[color=darkToffee,scale=1]  at (axis cs:180, 0) {$\vec{\bullet}$};
\node[color=darkToffee,scale=1,below]  at (axis cs:180, 0) {$ {\vec{\pi / \omega}}$};
%
%
\node[color=blueForRed1,scale=1]  at (axis cs:0, 0.5) {$\vec{\bullet}$};
\node[color=blueForRed1,scale=1,above left]  at (axis cs:0, 0.5) {$ { \frac{\nu_0}{\sqrt{\nu_0^2 + \mu_1^2}} }$};
\node[color=red1,scale=1]  at (axis cs:95, 0.82) {$\vec{\bullet}$};
\node[color=blueForRed1,scale=1]  at (axis cs:95, 0) {$\vec{\bullet}$};
\node[color=blueForRed1,scale=1,below]  at (axis cs:95, -0.05) {$ {\vec{ \tau_+ }}$};
     \end{axis}
\end{tikzpicture}
\end{center}
\caption{\label{fig:complexConjugatePolesLimitOfTauPlus}If $ \xi \to + \infty ,$ then~$ \nu_0 / \sqrt{\nu_0^2 + \mu_1^2} \to 0 , \tau_+ \to \pi / \omega . $}
\end{figure}
It is clear from Equation~\eqref{eqn:tauPrimeForComplexConjugatePoles} that~$\tau_+\left( \xi , 0 \right)$ is an increasing function of~$\xi  $ if~$ \xi > - \kappa . $

But~$\tau_+\left( \xi , 0 \right)$ is bounded above, as deduced below.
The first exit time~$ \tau_+\left( \xi , 0 \right) $ is the smallest positive time~$t$ solving the
following equation: 
\begin{align*}
			&	    - {\frac{  \nu_0 }{  \omega }}  +
                         {\frac{e^{-\sigma t}}{\omega}}
                     \bigl[ 
											     \nu_0  \cos{\omega t} +  {\mu}_{1} \sin{\omega t} 
											\bigr] 
											\; = \;  0
\\
	\iff & \quad      \quad   \quad   \quad  \;  \; \; \quad   \quad \nu_0  \cos{\omega t} +  {\mu}_{1} \sin{\omega t} 
											\; = \; {  \nu_0 }  {e^{\sigma t}}
\\		
    \iff &      \quad      \sin{ \left( \omega t + \arcsin{ \left( {\frac{\nu_0}{\sqrt{\nu_0^2 + \mu_1^2}}} \right) }\right) }
								\; = \; {\frac{  \nu_0 {e^{\sigma t}} }{  {\sqrt{\nu_0^2 + \mu_1^2}}   }}   ,
\end{align*}
from which we can deduce that for every finite~$\xi > - \kappa , $ 
\begin{align*}
 \tau_+\left( \xi , 0 \right) & < {\frac{\pi}{\omega}} - \arcsin{ \left( {\frac{\nu_0}{\sqrt{\nu_0^2 + \mu_1^2}}} \right) } 
   \; < \; {\frac{\pi}{\omega}} .
\end{align*}
Thus, if~$\xi$ is finite and bigger than~$ - \kappa ,$ then~$ \tau_+ \left( \xi , 0 \right) $ is an increasing function, but is bounded above by~$ \pi / \omega  .$
Hence as~$\xi \to + \infty , $ the function~$  \tau_+\left( \xi , 0 \right) $ converges to a limit, which is less than or equal to~$ \pi / \omega  . $
We calculate this limit precisely. 
\begin{align*}
    \lim_{ \xi \to \infty }   \sin{ \left( \omega \tau_+ + \arcsin{ \left( {\frac{\nu_0}{\sqrt{\nu_0^2 + \mu_1^2}}} \right) }\right) }
& = 
    \lim_{ \xi \to \infty }  {\frac{  \nu_0   e^{\sigma \tau_+} }{  {\sqrt{\nu_0^2 + \mu_1^2}}   }}   ,
\\
\implies
      \sin{ \left( \omega \lim_{ \xi \to \infty }   \tau_+ \right) }
& = 0 ,
\end{align*}
because: (i)~$\tau_+$ is bounded above, (ii)~$\nu_0$ is independent of~$ \xi , $ and (iii)~if~$\xi~\to~\infty  $  then~$ \mu_1~\to~\infty $ . And since
\begin{gather*}
    \lim_{ \xi \to \infty } \tau_+ \left( \xi , 0 \right)  \; \le \;  {\frac{\pi}{\omega}} , \quad
     \sin{ \left( \omega \lim_{ \xi \to \infty }   \tau_+ \left( \xi , 0 \right) \right) } \; = \; 0 ,
\end{gather*}
it follows that
\begin{align*}
    \lim_{ \xi \to \infty }   \tau_+ \left( \xi , 0 \right)  & =  {\frac{\pi}{\omega}} .
\end{align*}
\subsubsection{If~$\xi \to + \infty ,$ then~$ \left\lvert  f_+\left( \xi \right) / \xi  \right \rvert \to e^{- \sigma \pi / \omega} $  }
From Equation~\eqref{eqn:explicitRHSforComplexConjugatePoles} we get:
\begin{align*}
    f_+{\left( \xi \right)}
& =
	 - \kappa -  {\frac{  2 \sigma }{ \left( \sigma^2 + \omega^2 \right) }} \gamma 
  +  {\frac{1}{\omega}}               
				{ \left( \omega {\mu}_{1}   +  \sigma \nu_0  \right) } \cos{\omega \tau_+} e^{-\sigma \tau_+}
\\
& 
            \quad  + {\frac{1}{\omega}}  
			 { \left( \sigma {\mu}_{1}   -  \omega \nu_0  \right) } \sin{\omega \tau_+} e^{-\sigma \tau_+}
 .
\end{align*}
Hence we can do the straightforward derivation below:
\begin{align*}
   \lim_{\xi\to\infty}   \left\lvert {\frac{  f_+\left( \xi \right)}{\xi}}  \right \rvert 
	& =  \Biggl\lvert
	        -   \lim_{\xi\to\infty}{\frac {\kappa +  {{  2  \gamma  \sigma }/{  \sigma^2 + \omega^2  }} }
	                                      {\xi}
	                               }
				\Biggr.
\\
  & \quad \ \
          +    \lim_{\xi\to\infty}{\frac { { \left( \omega {\mu}_{1}   +  \sigma \nu_0  \right) } \cos{\omega \tau_+} e^{-\sigma \tau_+} / {\omega}  }
					                               {\xi}
																  }
\\
&      
   \quad \  \   \Biggl.
	        +    \lim_{\xi\to\infty}{\frac { { \left( \sigma {\mu}_{1}   -  \omega \nu_0  \right) } \sin{\omega \tau_+} e^{-\sigma \tau_+} / {\omega} }
	                                       {\xi}
																	}
	     \Biggr\rvert ,
\\
& =   \left\lvert   0 - e^{- \sigma \pi / \omega} + 0    \right\rvert \; = \; e^{- \sigma \pi / \omega} \; < \; 1 .
\end{align*}
Hence for all sufficiently large~$\xi$ we get the inequality~$    \left\lvert {f_+\left( \xi \right)} \right\rvert    <  \left\lvert {\xi} \right\rvert  . $  
Since~$ {f_+\left( \xi \right)} $ is continuous and monotone decreasing, it follows that there exists a positive~$\eta$ such that whenever~$ \xi $ is greater than or equal to~$\eta , $  we can be sure that $ \lvert f_+ \left( \xi  \right) \rvert \le \xi $ .
\subsection{The derivative~${f_+}^{\prime}\left( \cdot \right)$ is continuous on~$ \left[ 0 , + \infty \right)  $}
Exactly like in the case of distinct real poles~(Section~\ref{section:fPrimeIsContinuous}), in this case too is~${f_+}^{\prime}\left( \cdot \right)$ a continuous function on~$ \left[ 0 , + \infty \right) . $
\subsection{The function~${f_+}\left( \xi \right)$ is a contraction mapping}
We shall find a tight upper bound for the magnitude of the derivative~${f_+}^{\prime}\left( \cdot \right)$ on finite intervals of the form:~$\left[ 0 , \theta \right]  $  with~$0 < \theta < + \infty . $ Just like in Section~\ref{section:subsub:contractionMappingDistinctRelalPoles}  we use the continuity of~$ {f_+}^{\prime}\left( \xi \right)  . $ 


The magnitude of~$ {f_+}^{\prime}\left( \xi \right) $ is continuous on ~$\left[ 0 , \theta \right] .$ By the Weierstrass theorem on extreme values of continuous functions, it follows that the magnitude of~$ {f_+}^{\prime}\left( \cdot \right)  $ attains its maximum on this closed and bounded interval. Denote this maximum value by~${\overline{\lambda}}_{\theta} . $ 
And so we have:
\begin{align}
 \left\lvert {f_+}^{\prime}\left( \xi \right) \right\rvert & \le {\overline{\lambda}}_{\theta} \; < \; 1 , \	  \forall \xi \in \left[ 0 , \theta \right] .
\label{eqn:lambdaForFprimeComplexConjugatePoles}
\end{align}
We apply this at any two points~$\xi ,{\xi}^{\prime} \in \left[ 0 , \theta \right] , $ to get:
\begin{align}
   \left\lvert f_+\left( \xi \right) - f_+\left( {\xi}^{\prime}  \right) \right\rvert 
& = \left\lvert   \int_{\min{ \left\{ \xi ,{\xi}^{\prime} \right\}}}^{\max{ \left\{ \xi ,{\xi}^{\prime} \right\}}}{
f_+^{\prime}\left( {\breve{\xi}} \right) d{\breve{\xi}}} \right\rvert , \nonumber \\
& \le \int_{\min{ \left\{ \xi ,{\xi}^{\prime} \right\}}}^{\max{ \left\{ \xi ,{\xi}^{\prime} \right\}}}{
 \left\lvert   f_+^{\prime}\left( {\breve{\xi}} \right) \right\rvert d{\breve{\xi}}} , \nonumber \\
& \le \int_{\min{ \left\{ \xi ,{\xi}^{\prime} \right\}}}^{\max{ \left\{ \xi ,{\xi}^{\prime} \right\}}}{
 {\overline{\lambda}}_{\theta} d{\breve{\xi}}} , \nonumber \\
& = {\overline{\lambda}}_{\theta}   \left\lvert \xi - {\xi}^{\prime} \right\rvert  .
\label{eqn:contractionMappingPropertyForComplexConjugatePoles}
\end{align}
Hence the function~$  f_+\left( \xi \right) $ is a contraction mapping on~$ \left[ 0 , \theta \right],$ whenever $\theta \ge \eta.$

\section{Asymptotic behaviour of RFS for other zero locations}
We continue with the RFS for the transfer function:
\begin{gather*}
    {\frac{ - \kappa s + \gamma}{ s^2 + a_1 s + a_2}},
    \; \text{with} \, \gamma, a_1 , a_2 > 0,
\end{gather*}
but in contrast to previous sections, we consider the coefficient~$\kappa$ as a free parameter. 
We ask how the asymptotic behaviour of the RFS changes as we change the sign of~$\kappa . $ And we answer via a study of how the switching point transformation function changes as we change the sign of~$\kappa . $
\subsection{Influence of~$\kappa$ on the Switching point transformation function}
To emphasize dependence on~$\kappa , $ let~$ f_{+,\kappa}\left(\cdot \right) $ denote the Switching point transformation function corresponding to a value of~$-\kappa$ for the coefficient of~$s$ in the numerator of the plant transfer function. 
Clearly the function~$ f_{+,\kappa}\left(\cdot\right) $ is completely determined by the dynamics of the ODE: 
\begin{align*}
{\frac{d}{dt}}\left( x - A^{-1}B  \right) & =
    A \left( x - A^{-1}B   \right)  .
\end{align*}
The only effect of varying~$\kappa$ is to vary the $x_1$-coordinate of the sink~$ A^{-1}B $ (see~\eqref{eqn:sinkCoordinates}). Thus, if we vary~$\kappa , $ then the entire vector field of the above ODE is simply translated along the $x_1$-axis. If~${l}, {\widehat{\kappa}}$ are two values for the parameter~$\kappa, $ then  
\begin{align*}
    f_{+,{l}}\left(\xi\right)  & =
    f_{+,{\widehat{\kappa}} }\left(\xi +  {l} - {\widehat{\kappa}} \right) -  {l} + {\widehat{\kappa}} ,
    \ \ \ \forall \xi \in {\mathbb{R}} , \ \text{and}, \\
    f_{+,{l}}^{\prime}\left(\xi\right)  & =
    f_{+,{\widehat{\kappa}} }^{\prime}\left(\xi +  {l} - {\widehat{\kappa}} \right) ,
    \ \ \ \forall \xi \in {\mathbb{R}} .
\end{align*}
From the known properties of $ f_{+,{\widehat{\kappa}} }\left( \cdot \right) , f_{+,{\widehat{\kappa}} }^{\prime}\left( \cdot \right) $ for positive values of~${\widehat{\kappa}},$ we shall derive properties when the parameter~$\kappa$ is either zero or negative.
\subsection{Plant with no finite zero\label{section:plantWithNoZero}}
Let~$ {\widehat{\kappa}}  $ be positive. Then for the RFS corresponding to~$\kappa = 0,$ the origin is a Zeno equilibrium point, in the sense that the trajectories starting there shall remain there, even though there is perpetual switching of the relay. There are no other chattering points. Furthermore, 
\begin{align*}
    f_{+,0}\left(\xi\right)  & =
    f_{+,{\widehat{\kappa}} }\left(\xi - {\widehat{\kappa}} \right) + {\widehat{\kappa}} ,
    \ \ \ \forall \xi \in {\mathbb{R}} , \ \text{and so,}  \\
     - f_{+,0}\left(\xi\right)  & =
    \begin{cases}
        - \xi & \text{if} \ \xi < 0, \\ 
        \int_0^{\xi}{  
         - f_{+,{\widehat{\kappa}} }^{\prime}\left(\zeta - {\widehat{\kappa}}\right) d{\zeta}} 
        & \text{if} \ \xi \ge 0.
    \end{cases}
\end{align*}
We have proved in earlier sections that~$ - f_{+,{\widehat{\kappa}} }\left(\xi - {\widehat{\kappa}} \right) $ is monotonically increasing on~$\left[ 0 , \infty\right) , $ and that the magnitude of the derivative~$f_{+,{\widehat{\kappa}} }^{\prime}\left(\xi - {\widehat{\kappa}} \right) $ is strictly less than one on~$\left( 0  , \infty\right) . $ Hence we can conclude: (i)~that~$ - f_{+,0}\left(\xi \right) $ is monotonically increasing on~$\left[0 , \infty\right) , $ and (ii)~that 
\begin{align*}
    - f_{+,0}\left(\xi\right)  & < \xi, \ \ \text{for} \ \xi > 0 . 
\end{align*}
Hence zero is the only possible fixed point for~$ - f_{+,0}\left( \cdot \right). $ Thus starting with the second element of~$\Xi\left( \xi_0 \right) , $ the successive iterates of~$ - f_{+,0}\left( \cdot \right) $ decrease monotonically and converge to zero.
Therefore we conclude that the all trajectories converge to the origin, for the RFS with the following class of plant transfer functions
\begin{gather*}
    {\frac{\gamma}{ s^2 + a_1 s + a_2}},
    \; \text{with} \, \gamma, a_1 , a_2 > 0.
\end{gather*}
This conclusion could not have been reached if we had tried to apply the Circle criterion to this class of plants. We do not yet know whether this conclusion could have been reached by applying the Popov criterion.
\subsection{Plant zero is negative real}
Let~$ {\widehat{\kappa}}  $ be positive. Then for the RFS corresponding to~$l = - {\widehat{\kappa}} , $ the set~$ \left\{ \left(  x_1 , x_2 \right) :  x_1 \in \left[ - {\widehat{\kappa}} , {\widehat{\kappa}} \right] , \, \text{and} \, x_2 = 0 \right\} $ is the chattering set, and there is no equilibrium point.
Furthermore, 
\begin{align*}
    f_{+, - {\widehat{\kappa}}  }\left(\xi\right)  & =
    f_{+,{\widehat{\kappa}} }\left(\xi - 2 {\widehat{\kappa}} \right) + 2 {\widehat{\kappa}} ,
    \ \ \ \forall \xi \in {\mathbb{R}} , \ \text{and so,}  \\
     - f_{+,0}\left(\xi\right)  & =
    \begin{cases}
        - \xi & \text{if} \ \xi < {\widehat{\kappa}}  , \\ 
        \int_0^{\xi}{  
         - f_{+,{\widehat{\kappa}} }^{\prime}\left(\zeta - 2 {\widehat{\kappa}}\right) d{\zeta}} 
        & \text{if} \ \xi \ge {\widehat{\kappa}}  .
    \end{cases}
\end{align*}
As in Section~\ref{section:plantWithNoZero} we can conclude: (i)~that~$ - f_{+, - {\widehat{\kappa}} }\left(\xi \right) $ is monotonically increasing on~$\left[ + \kappa , \infty\right) , $ and (ii)~that 
\begin{align*}
    - f_{+, - {\widehat{\kappa}} }\left(\xi\right)  & < \xi - 2 {\widehat{\kappa}}  , \ \ \text{for} \ \xi > + {\widehat{\kappa}}  . 
\end{align*}
Starting with the second element of~$\Xi\left( \xi_0 \right) , $ under each iteration of~$- f_{+, - {\widehat{\kappa}} }\left( \cdot \right) $ the switching point decreases by at least~$ 2{\widehat{\kappa}} , $ unless it is already in the chattering set.  
And once the RFS hits a chattering point, it always ``stays'' there.
Hence if the plant transfer function belongs to the class
\begin{gather*}
    {\frac{\kappa s + \gamma}{ s^2 + a_1 s + a_2}},
    \; \text{with} \, \kappa , \gamma, a_1 , a_2 > 0 ,
\end{gather*}
then all trajectories of the RFS converge asymptotically to the chattering set, which is bounded, and of measure zero. 
\section*{Acknowledgements}
I thank the reviewers and Johan Thunberg for their careful reading, and for pointing out errors in earlier versions. I thank Karl Henrik Johansson for valuable discussions.
\appendices
\section{Filippov solutions of our discontinuous ODE\label{sec:fillipovAppendix}}
Equations~\eqref{eqn:secondOrderRplus},~\eqref{eqn:secondOrderRminus} describe a state evolution that can be viewed as either discontinuous, or continuous but many-valued at the switching line~(see the discussion at the end of~\cite{filippov1960ifac}, which is reprinted in Appendix~B of~\cite{jeffrey2018discontinuous}). 
We shall briefly touch upon the existence and non-uniqueness of solutions in the sense of Filippov~\cite{filippov1960ifac}. Thus shall we justify our previous statements of the type: ``All trajectories converge to~\ldots .''%

The Filippov differential inclusion for~\eqref{eqn:secondOrderRplus},~\eqref{eqn:secondOrderRminus} is:
\begin{align*}
    {\dot{x}} & \in
     \begin{cases}
         \left\{ A x - B \right\} , & {\text{if}} \ C x > 0 , \\
         A x + B \times {\mathbf{\left[ -1 , +1 \right] }} , & {\text{if}} \ C x = 0 , \\
         \left\{ A x + B \right\} , & {\text{if}} \ C x  < 0 ,
     \end{cases}
\end{align*}
where the right hand side is multi-valued only on the switching line. Recall that a Filippov solution is one that satisfies this differential inclusion aslmost everywhere on the time interval.
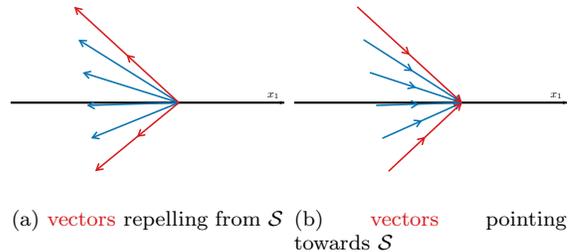
\begin{figure}
\begin{center}
    \subfloat[{\textcolor{red1}{vectors}} repelling from~${\mathcal{S}}$\label{fig:vectorsDiverging}]{%
    \begin{tikzpicture}[scale = 0.48]
\begin{axis}[axis lines = middle, ticks = none,  xmin = 0.2, xmax = 1.5, ymin = -0.7, ymax = 0.8,
    xlabel=$x_1$,  ylabel={},
            x post scale = 1.1, y axis line style={draw=none},
            ]
\draw[color=black,ultra thick]   (axis cs:-2, 0) to (axis cs: 2,0);
        \draw[color=blueForRed1,postaction={decorate}, ->,>=angle 60, very thick]   (axis cs: 1, 0) to (axis cs: 0.52, 0.46) ;
        \draw[color=blueForRed1,postaction={decorate}, ->,>=angle 60, very thick]   (axis cs: 1, 0) to (axis cs: 0.54, 0.22) ;
        \draw[color=blueForRed1,postaction={decorate}, ->,>=angle 60, very thick]   (axis cs: 1, 0) to (axis cs: 0.56, -0.02) ;
        \draw[color=blueForRed1,postaction={decorate}, ->,>=angle 60, very thick]   (axis cs: 1, 0) to (axis cs: 0.58, -0.26) ;
\begin{scope}[very thick,decoration={
    markings,
    mark=at position 0.5 with {\arrow{>}}}
    ]
\draw[color=red1,postaction={decorate}, ->,>=angle 60, very thick]   (axis cs:1, 0) to (axis cs: 0.5,0.7);
\draw[color=red1,postaction={decorate}, ->,>=angle 60, very thick]   (axis cs:1, 0) to (axis cs: 0.6,-0.5);
\end{scope}
\node[color=blueForRed1,scale=1.5]  at (axis cs:0, 0) {$\bullet$};
\node[color=blueForRed1,scale=1,below right]  at (axis cs:0, 0) {$(0,0)$};
\end{axis}%
\end{tikzpicture}%
}
\hspace*{4pt}%
    \subfloat[{\textcolor{red1}{vectors}} pointing towards~${\mathcal{S}}$\label{fig:vectorsConverging}]{%
    \begin{tikzpicture}[scale = 0.48]
\begin{axis}[axis lines = middle, ticks = none,  xmin = 0.2, xmax = 1.5, ymin = -0.7, ymax = 0.8,
    xlabel=$x_1$,  ylabel={},
            x post scale = 1.1, y axis line style={draw=none},
            ]
\draw[color=black,ultra thick]   (axis cs:-2, 0) to (axis cs: 2,0);
\begin{scope}[very thick,decoration={
    markings,
    mark=at position 0.5 with {\arrow{>}}}
    ]
        \draw[color=blueForRed1,postaction={decorate}, ->,>=angle 60, very thick]   (axis cs: 0.53, 0.46) -- (axis cs: 1, 0) ;
        \draw[color=blueForRed1, postaction={decorate}, ->,>=angle 60, very thick]   (axis cs: 0.56, 0.22) -- (axis cs: 1, 0) ;
        \draw[color=blueForRed1, postaction={decorate}, ->,>=angle 60, very thick]   (axis cs: 0.59, -0.02) -- (axis cs: 1, 0) ;
        \draw[color=blueForRed1, postaction={decorate}, ->,>=angle 60, very thick]   (axis cs: 0.62, -0.26) -- (axis cs: 1, 0) ;
    \draw[color=red1,postaction={decorate}, ->,>=angle 60, very thick]   (axis cs: 0.5,0.7) to (axis cs:1, 0) ;
    \draw[color=red1,postaction={decorate}, ->,>=angle 60, very thick]   (axis cs: 0.65,-0.5) to (axis cs:1, 0)  ;
\end{scope}
\end{axis}%
\end{tikzpicture}%
}
\end{center}
    \caption{\label{fig:vectorsAtSwitchingLine}Limiting vectors~(shown in {\textcolor{red1}{red}}) from either side of the switching line, and some other vectors in the Filippov inclusion~(shown in {\textcolor{blueForRed1}{blue}}).}
\end{figure}
\subsubsection*{Plant has a positive real zero}
In this case, on some parts of the switching line each point has a unique solution, and elsewhere each point has infinitely many solutions through it. In the rest of the paper, any interval shall be on the~$x_1$-axis. %

At each point in the closed interval~$\left[ - \kappa , \kappa \right]$ infinitely many solutions exist. At a typical such point, the limiting vector from each side points away from the switching line, as shown in Figure~\ref{fig:vectorsDiverging}. At any such point, the following solutions exist:
\begin{itemize}
    \item{moving in a direction of increasing~$x_2,$ so that the next switching point is in the interval~$\left( - \infty , - \kappa \right),$}
    \item{moving in a direction of decreasing~$x_2,$ so that the next switching point is in the interval~$\left( \kappa , \infty  \right),$}
    \item{sliding along the~$x_1$-axis and reaching the origin,}
    \item{sliding along the~$x_1$-axis towards the origin, but moving off the $x_1$-axis at some point before the origin.}
\end{itemize}
At any point in~$\left( - \infty , - \kappa \right)$ the solution is unique because:
\begin{enumerate}
    \item both the two limiting vectors point towards decreasing~$x_2,$ which is enough to invoke Proposition~5 of~\cite{cortes2008solutionNotionsCSM}.   
This unique solution  through such a switching point is to simply cross over the switching line, and, 
    \item the subsequent trajectory never enters~$ \left[ - \kappa , +  \kappa \right] .$
\end{enumerate}
Similarly, on the interval~$\left( \kappa , \infty \right)$ the unique solutions simply cross over in directions that increase~$x_2.$%

Thus the limit cycle is globally attractive in the~{\textit{weak sense}}~\cite{cortes2008solutionNotionsCSM}, which means that at  every point in the state space, at least one solution exists that converges to the limit cycle.%

Moreover, the sliding mode is a repelling one. A solution that purely slides along the switching set is unstable. If a  slight perturbation takes it off the switching line, then the perturbed trajectory converges to the limit cycle.
\subsubsection*{Plant has no finite zero}
In this case the Filippov solution is unique at every switching point. This conclusions follows for points other than the origin by Proposition~5 of~\cite{cortes2008solutionNotionsCSM}. At the origin too the solution is unique, namely the equilibrium solution. This is unique because the only alternatives available are sliding along the switching line, including sliding with zero speed. But if  a solution exists that slides away from the origin  with non-zero speed, then that would switch in infinitesimally small time and converge in a piecewise differentiable spiral towards the origin. Hence the only viable Filippov solution at the origin is to stay put there. 

    Alternatively we can appeal to Theorem~3 of~\cite{pogromskyHeemelsNijmeijer2003solutionConceptsLinearRelay}, which says when the RFS of Figure~\ref{fig:blockDiagram} has a  plant with a relative degree of {\textit{two,}} there is a unique solution at every initial condition if and only if the plant's second Markov parameter is positive. This holds for this case of the plant because by setting~$\kappa = 0$ in Equation~\eqref{eqn:observerRealization}, we get the first Markov parameter~$CB = 0,$ and the second Markov parameter~$ CAB = \gamma > 0 .$%
\subsubsection*{Plant has a negative real zero}
In this case also is the Filippov solution unique at every switching point. At every point in the interval~$  \left( - \kappa , + \kappa \right) , $ the limiting vectors from either side of the switching line are converging towards the switching line, as is shown in Figure~\ref{fig:vectorsConverging}. And so the sliding mode solution is unique~(by Proposition~5 of~\cite{cortes2008solutionNotionsCSM}). At switching points outside this interval, the transversal crossover solution is unique, by the same proposition.%

    The same conclusion follows from Theorem~2 of~\cite{pogromskyHeemelsNijmeijer2003solutionConceptsLinearRelay}, which says when the RFS of Figure~\ref{fig:blockDiagram} has a  plant with a relative degree of {\textit{one,}} there is a unique solution at every initial condition if and only if the plant's first Markov parameter is positive. This holds for this case of the plant because by setting~$\kappa >  0$ in Equation~\eqref{eqn:observerRealization}, we get the first Markov parameter~$CB = \kappa > 0 .$%
\section{A geometric proof of Schur stability of the linearized switching point transformation, when the plant poles are distinct and negative\label{sec:geometricProof}}

{\textsf{Caution:}} This appendix uses some notation with a meaning different from that in the rest of the paper. For example, the symbols~$\eta, \mu, \theta, \chi , \xi .$ 
Recall that the row vector~$C $ is  the output coefficient matrix in Equation~\eqref{eqn:RFSdynamics}.

For the initial state~$x_0$ let~$W\left(  x_0 \right) \triangleq D_{x_0}\psi\left( x_0 \right),$ which is the Jacobian of the map~$\psi_+.$
Then,
\begin{align}
    W \left( x_0 \right) & =  \left( I - {\frac{1}{ C \, \vec{v} }}  \vec{v} \, C  \right) 
    e^{A\tau_+\left( x_0 \right)}
    \left(  {\frac{ \vec{u} {\vec{u}}^T}{{\vec{u}}^T  \vec{u}  }} \right) ,
    \label{eqn:jacobianOfSwitchingMap}
\end{align}
where~$\vec{u}, \vec{v}$ are column vectors, and are given by:
\begin{align*}
    \vec{u} & =  A \left( x_0 -  A^{-1}B  \right), \\
    \vec{v}  & =  A e^{A\tau_+\left( x_0 \right)}  \left( x_0 -  A^{-1}B  \right) . 
\end{align*}

\begin{lemma}[Schur stability in two dimensions]
Assume that both the eigenvalues of the matrix~$A $ are distinct and negative real, in the following affine, two dimensional, first order constant coefficient ODE:
\begin{align}
    {\frac{d}{dt}}
    \begin{pmatrix}
        x_1 \\ x_2
    \end{pmatrix}
    & =
    A 
    \begin{pmatrix}
        x_1 \\ x_2
    \end{pmatrix}
    .
    \label{eqn:twoDimensionalGeneralODE}
\end{align}
    Assume also that we are given a line~${\mathcal{L}}$ in the $ (x_1,x_2)$-space that does not pass through the origin. Now suppose that we are considering  a trajectory~$\textswab{T}$ that intersects the given line at two distinct points~$P, Q$ in that order. Then the switching point transformation that takes points on the line that are in the neighbourhood of~$P,$ along the flow of the ODE, to points on the line that are in the neighbourhood of~$Q$, has a local linearization that is Schur stable.

    Suppose that there is a second line~${\mathcal{L}}^{\prime}$, that does not pass through the origin, and is parallel to the line~${\mathcal{L}}$.
    If the trajectory~${\textswab{T}}$ emerging from the point~$Q$ intersects the line~${\mathcal{L}}^{\prime}$ at the point~$Q^{\prime}$, then the switching point transformation from the neighbourhood of~$P$ on~${\mathcal{L}}$, along the flow of the ODE, to the neighbourhood of~$Q^{\prime}$ on~${\mathcal{L}}^{\prime}$ has a local linearization that is Schur stable.
\label{lemma:secondOrderSchur}
 \begin{figure}[b!]
\begin{center}
 \begin{tikzpicture}
     \begin{axis}[
          xmin=0.5, xmax=4.5,
          ymin=-0.5, ymax=4.5,
          axis lines=none,
          axis on top=true,
          domain=-4.5:4.5,
          ylabel=$x_2$,
          xlabel=$x_1$,
          ticks = none,
      ]
      \addplot [<-,mark=none,ultra thick,draw=c3,domain=1.0:2.5, samples = 100] {(1 - exp(-0.1*x))*20*pow(sin(40*x),2)};
      \addplot [<-,mark=none,ultra thick,draw=c3,domain=2.5:3.4, samples = 100] {(1 - exp(-0.1*x))*20*pow(sin(40*x),2)};
      \addplot [mark=none,very thick,draw=c2, samples = 5] {2};
      \addplot [mark=none,very thick,draw=c2, samples = 5] {3.5};
   \draw [draw=none,pattern = north west lines , pattern color = c5]
     (1.47-0.30, 2.00-0)
   --(1.47-0.30, 2.00+0.2)
   --(1.47+0.30, 2.00+0.2)
   --(1.47+0.30, 2.00-0)
   --(1.47-0.30, 2.00-0) ;
   \node at (1.47,2.00) {{\Large{\textcolor{c2}{${\mathbf{\bullet}}$}}}};
   \draw [draw=none,pattern = north west lines , pattern color = c5]
     (2.01-0.30, 3.5-0)
   --(2.01-0.30, 3.5+0.2)
   --(2.01+0.30, 3.5+0.2)
   --(2.01+0.30, 3.5-0)
   --(2.01-0.30, 3.5-0) ;
   \node at (2.00,3.5) {{\Large{\textcolor{c2}{${\mathbf{\bullet}}$}}}};
   \draw [draw=none,pattern = north west lines , pattern color = c5]
     (3.17-0.30, 3.5-0)
   --(3.17-0.30, 3.5+0.2)
   --(3.17+0.30, 3.5+0.2)
   --(3.17+0.30, 3.5-0)
   --(3.17-0.30, 3.5-0) ;
   \node at (3.17,3.5) {{\Large{\textcolor{c2}{${\mathbf{\bullet}}$}}}};
\end{axis}
\end{tikzpicture}
\end{center}
\label{figure:twoDimensionalSchurStability}
     \caption{Switching point transformations of Lemma~B.1
}
\end{figure}
\end{lemma}
\begin{proof}
    Denote by~$ \psi_{{\mathcal{L}}\to{\mathcal{L}}}\left(x\right) $ the switching point transformation from points on the line~${\mathcal{L}}$ to itself, under the flow of the ODE~\eqref{eqn:twoDimensionalGeneralODE}. Let the point~$p\in{\mathcal{L}}$ be such that, under the flow of this ODE, it takes a finite and positive time~$\tau$ to start at the point~$p$ and re-enter the line~${\mathcal{L}}$ again.

    To prove Schur stability of the linearization of~$  \psi_{{\mathcal{L}}\to{\mathcal{L}}^{\prime}}\left( \cdot \right) $ at~$p,$ we shall show that~$\psi_{{\mathcal{L}}\to{\mathcal{L}}^{\prime}} $ has a Taylor series expansion at~$p$ with its first order coefficient being less than one in magnitude.

    Our proof has two stages. In the first stage, we show that under any invertible and linear change of variables for the two dimensional state space, we preserve the Schur stability of the linearized switching point transformation. And in the second stage, we arrive at a special choice of state variables in which we establish Schur stability by explicit calculations.

    \noindent
    {\textsf{Stage~(i): {\underline{Schur stability preserved under variable change}}}}
We know that any invertible and linear change of variables preserves the stability of the linearization of an ODE's trajectory. Here we shall show the same to be true of the linearization around a switching point transformation.

    Suppose that the matrix~$T$ is invertible. Then consider:
    \begin{align*}
        \vec{\widetilde{x}} & \triangleq T \vec{x} .
    \end{align*}
    Let the points~$p, q$ be mapped by~$T$ to the points~$ {\widetilde{p}}, {\widetilde{q}} $ respectively.
    Let the lines~$  {\mathcal{L}}, {\mathcal{L}}^{\prime} $ be mapped by~$T$ to the lines~$ {\widetilde{\mathcal{L}}}, {\widetilde{\mathcal{L}}}^{\prime}  $ respectively.
    And let~$  {\widetilde{\psi}}_{{\widetilde{\mathcal{L}}}\to{\widetilde{\mathcal{L}}}^{\prime}} $ be the map on the~$ {\widetilde{x}}$-space that corresponds to the map~$ \psi_{{\mathcal{L}}\to{\mathcal{L}}^{\prime}}$ on the original $x$-space. That is to say:
    \begin{align*}
        {\widetilde{\psi}}_{{\widetilde{\mathcal{L}}}\to{\widetilde{\mathcal{L}}}^{\prime}}\left({\widetilde{x}}\right ) & = T \;
        {\psi}_{{\mathcal{L}}\to{\mathcal{L}}^{\prime}}\left(T^{-1} {\widetilde{x}}\right ) .
    \end{align*}
    Assume that the map~${\widetilde{\psi}}_{{\widetilde{\mathcal{L}}}\to{\widetilde{\mathcal{L}}}^{\prime}} $ has a Schur stable linearization at~${\widetilde{p}}$, which means that there is a real number~${\widetilde{\lambda}}_2$ such that its magnitude is less than one, and  for small~$\epsilon$,
    \begin{align*}
        {\widetilde{\psi}}_{{\widetilde{\mathcal{L}}}\to{\widetilde{\mathcal{L}}}^{\prime}}\left( \vec{{\widetilde{p}}} + \epsilon \; \vec{{\widetilde{l}}} \right)
        & =
        {\widetilde{\psi}}_{{\widetilde{\mathcal{L}}}\to{\widetilde{\mathcal{L}}}^{\prime}}\left( \vec{{\widetilde{p}}}  \right)  +
        \epsilon \lambda \; \vec{{\widetilde{l}}}   +
        \epsilon \; {\widetilde{\rm{rem}}}\left( \vec{{\widetilde{p}}},  \epsilon, {\widetilde{\mathcal{L}}} \right) \; \vec{{\widetilde{l}}}   ,
    \end{align*}
    where~$\vec{{\widetilde{l}}}$ is a unit vector parallel to the line~$ {\widetilde{\mathcal{L}}},$ and the scalar function~$ {\widetilde{\rm{rem}}}\left(\cdot\right)$ is a modified remainder term from the Taylor series, such that
    \begin{align*}
        \lim_{\epsilon\to 0}{ {\widetilde{\rm{rem}}}\left( \vec{{\widetilde{p}}}, \epsilon, {\widetilde{\mathcal{L}}} \right)  } & = 0 .
    \end{align*}
    Then it follows that in the original~$x$-space,
    \begin{align*}
        \shoveleft{ \psi_{{\mathcal{L}}\to{\mathcal{L}}}\left( \vec{p} + \epsilon \; \vec{l} \right) }
  & \\
         \shoveright{
            T^{-1} \times \Biggl(
        {\widetilde{\psi}}_{{\widetilde{\mathcal{L}}}\to{\widetilde{\mathcal{L}}}}\left( T \vec{p}  \right)  +
        \epsilon \lambda \; T \vec{l}   +
        \epsilon \; {\widetilde{\rm{rem}}}\left( T \vec{p},  \epsilon, {\widetilde{\mathcal{L}}} \right) \; T \vec{l}  \Biggr)  ,
    } &
    \end{align*}

    \noindent
    {\textsf{Stage~(ii): {\underline{Schur stability in a special diagonal realization}}}}
    In this stage, we shall first consider two invertible and linear changes in the state variables applied in sequence: (a) a first linear transformation which diagonalizes the coeffecient matrix~$A $ giving a diagonal realization that results in a linear second order ODE that is equivalent to the given ODE~\eqref{eqn:twoDimensionalGeneralODE}, and (b) a second linear transformation which preserves the diagonal property of the ODE's coefficient matrix, but which results in a slope of exactly~$+1$ for the switching line.
    As we carry on with this, we indicate the benefit of pursuing these rather hand-picked changes of state variables.

    The rest of the proof has three substages, where the first and the third substages focus on simplifications due to the two changes in state variables, and the second substage deals with a geometric calculation.

\noindent
    {\textsf{Substage~(ii-a): {\underline{Diagonalizing the $A$ matrix:}}}}
    We make a diagonalizing change of variables, so that we can write down a simple expression for the coordinates of the second switching point and the direction of the ODE's flow there, in terms of the coordinates of the first switching point.

Since~$A $ has distint eigenvalues, it is diagonalizable. In specific, if~$E $ is defined as the square matrix whose two columns are the right eigenvectors of~$A ,$ then~$E $ is invertible and~$E^{-1} A  E  = {\text{diag}}\left\{ -\alpha, -\beta \right\},$ where $-\alpha, -\beta$ are the two eigenvalues of~$A .$ Hence, with the change of variables: ${z} \triangleq E^{-1} x $ we get the equivalent ODE:
\begin{align}
    {\frac{d}{dt}}
    \begin{pmatrix}
        {z}_1 \\ {z}_2
    \end{pmatrix}
    & =
    \begin{bmatrix}
        -\alpha &  0       \\
        0       & -\beta
    \end{bmatrix}
    \begin{pmatrix}
        {z}_1 \\ {z}_2
    \end{pmatrix}
 .
    \label{eqn:twoDimensionalDiagonalODE}
\end{align}
\begin{figure}
  \begin{center}
      \begin{tikzpicture}
\begin{axis}[
  axis lines = middle,
  xmin=-3,xmax=3,
  ymin=-3,ymax=3,
          ylabel=${z}_2$,
          xlabel=${z}_1$,
  ticks=none,
]
    \addplot[c2,domain=-4:4,samples=5,thick] {1.2*x+1};
  \foreach \n in {1,...,6} {
  \addplot[c3,very thick,domain=0:3,samples=200,variable=\t](%
       {-5*(exp(-3*t))},%
       {(\n*1)* (exp(-1*t)}%
     );
  }
  \foreach \n in {1,...,6} {
  \addplot[c3,very thick,domain=0:3,samples=200,variable=\t](%
       {-5*(exp(-3*t))},%
       {(6+\n*2)* (exp(-1*t)}%
     );
  }
  \foreach \n in {1,...,6} {
  \addplot[c3,very thick,domain=0:3,samples=200,variable=\t](%
       {-5*(exp(-3*t))},%
       {(\n*-1)* (exp(-1*t)}%
     );
  }
  \foreach \n in {1,...,6} {
  \addplot[c3,very thick,domain=0:3,samples=200,variable=\t](%
       {-5*(exp(-3*t))},%
       {(-6-\n*2)* (exp(-1*t)}%
     );
  }
  \foreach \n in {1,...,6} {
  \addplot[c3,very thick,domain=0:3,samples=200,variable=\t](%
       {5*(exp(-3*t))},%
       {(\n*-1)* (exp(-1*t)}%
     );
  }
  \foreach \n in {1,...,6} {
  \addplot[c3,very thick,domain=0:3,samples=200,variable=\t](%
       {5*(exp(-3*t))},%
      {(-6-\n*2)* (exp(-1*t)}%
     );
  }
  \foreach \n in {1,...,6} {
  \addplot[c3,very thick,domain=0:3,samples=200,variable=\t](%
       {5*(exp(-3*t))},%
       {(\n*1)* (exp(-1*t)}%
     );
  }
  \foreach \n in {1,...,6} {
  \addplot[c3,very thick,domain=0:3,samples=200,variable=\t](%
      {5*(exp(-3*t))},%
      {(6+\n*2)* (exp(-1*t)}%
     );
  }
  \draw[thin,->, black] (0,-4)--(0,2.95);
   \node at (0.16,1.22) {{\large{\textcolor{c2}{${\mathbf{\bullet}}$}}}};
   \node at (1.32,2.55) {{\large{\textcolor{c2}{${\mathbf{\bullet}}$}}}};
\end{axis}
\end{tikzpicture}
\caption{Integral curves in the diagonal realization}
\label{fig:integralCurvesDiagonalRealization}
  \end{center}
\end{figure}
        Since~$E $ is an invertible and linear, the line~${\mathcal{L}} $ is mapped to a line~${\mathcal{L}_z},$ and this line does not pass through the origin of the ${z}$-space. The slope and intercepts of the line depend on where in the~${z}$-space the two switching points~${p}, {q}$ lie.

        Assume that the first intersection point~${p}$ lies in the first quadrant. Then so must the second intersection point~${q}.$ This is because, no matter what the time~$t$ is, the coordinate~${z}_1(t)$ must have the same sign as its initial value~${z}_1(0),$
Likewise~${z}_2(t)$ must have the same sign as its initial value~${z}_2(0).$  Denote the positive initial value~${z}_1(0)$ by~$\xi,$     and the positive initial value~${z}_2(0)$ by~$\chi.$

        Assume further that~$\alpha > \beta.$ Then the trajectory between points~${p}, {q}$ is a concave curve, when we view it  as the graph of~${z}_2$ as a function of~${z}_1,$ because
\begin{align*}
    {z}_2 (t) & =  \chi \; {\Bigl(  {z}_1(t) / \xi \Bigr)}^{\beta / \alpha}
.
\end{align*}
For this curve, the switching line~${\mathcal{L}_z}$ is the chord that passes through the points~${p} , {q} .$ By concavity, this chord must lie above every point on the curve whose~${z}_1$-coordinate is not a convex combination of the~${z}_1$-coordinates of~${p}, {q}.$ Hence the switching line must lie above the origin, and therefore its~${z}_2$-intercept must be positive.

The switching line~${\mathcal{L}_z}$  must also have positive slope. This follows from the fact that in moving from the point~$ {p}$ to point~$ {q},$ both the coordinate decrements are positive:
\begin{gather*}
    {z}_1(0) -  {z}_1(\tau) = \xi - \xi e^{-\alpha\tau} ,
    \ \
    {z}_2(0) -  {z}_2(\tau) = \chi - \chi e^{-\beta\tau} .
\end{gather*}
Hence the switching line takes the specific form:
\begin{align*}
    {\mathcal{L}_z} &
    =
     \left\{\left.\begin{pmatrix}
              {z}_1  \\  {z}_2
         \end{pmatrix}
         \right\rvert c_1  {z}_1  - c_2  {z}_2  + c_3 = 0 \right\}, \; \text{where}~c_1, c_2, c_3 > 0.
\end{align*}

We have deduced this form from the diagonal realization~\eqref{eqn:twoDimensionalDiagonalODE}, the assumption that~${p}$ lies in the first quadrant, and the assumption that~$\alpha>\beta.$ Clearly, there is no loss of generality in our assumed order of magnitudes of~$\alpha, \beta.$

Nor is there any loss in generality in assuming that~${p}$ lies in the first quadrant. There are two reasons for this.
Firstly the ODE~\eqref{eqn:twoDimensionalDiagonalODE} generates a vector field with mirror symmetry between adjacent quadrants, and so the geometric picture we have developed in the first quadrant is the same as in every other quadrant. And secondly we can make sure that~${p}$ does lie in the 1st quadrant, merely by specially picking the columns of the~$E $ matrix  through a suitable assignment of signs for the right eigenvectors of~$A .$


\noindent
    {\textsf{Substage~(ii-b): {\underline{Calculating exit and re-entry angles:}}}}
    Let~${{\vec{l}}_z}$ denote an unit vector parallel to the line~${\mathcal{L}_z}.$ Then the linearized switching point transformation from~${p}$  to~${q}$  is  simply the uniform scaling of vectors in the one dimensional subspace generated by~${{\vec{l}}_z}.$ We want to show that this scaling coefficient~(denoted by~$\kappa_{\vec{p}}$) has a magnitude less than one.

    Equation~\eqref{eqn:jacobianOfSwitchingMap} describes  the same linearized transformation, but as an operator on~${\mathbb{R}}^2.$ The RHS of  Equation~\eqref{eqn:jacobianOfSwitchingMap}  is a composition of three linear operations. In light of the geometrical picture we have developed~(Figure~\ref{fig:angles-second-order-diagonal}), we can bound the maximum magnifications possible for the three individual matrix operations.

The first operation is the orthogonal projection along~$\vec{u}$:
\begin{gather*}
   {\frac{1}{ {\vec{u}}^T  \vec{u} }} {\vec{u}}  {\vec{u}}^T  ,
\end{gather*}
where 
${\vec{u}}  $ is the exit vector, which is defined as the ODE's velocity vector at the exit point~${p}.$ That is:
\begin{align*}
    {\vec{u}} & =
    \begin{pmatrix}
        -\alpha \xi \\
        -\beta \chi
    \end{pmatrix} .
\end{align*}
When acting on the unit vector~$ {{{\vec{l}}_z}},$ the above orthogonal projection produces a vector of length
$  \left\lvert \cos{\measuredangle\left( {\vec{u}},  {\vec{c}}  \right)}  \right\rvert ,  $
where $ {\vec{c}}$ 
is normal to the switching line~${\mathcal{L}_z},$ and is given by:
\begin{align*}
    {\vec{c}} & =
    \begin{pmatrix}
        {\phantom{-}} c_1 \\
        - c_2
    \end{pmatrix} .
\end{align*}

The second operator is simply the two dimensional exponential map: ${\rm{diag}}\left\{ e^{-\alpha\tau} , e^{-\beta\tau} \right\},$  as per the ODE~~\eqref{eqn:twoDimensionalDiagonalODE}.

The third operation is the oblique projection along~$\vec{v}$ onto the line~${\mathcal{L}_z}$:
\begin{gather*}
    I_2  - {\frac{1}{ {\vec{c}}^T  \vec{v} }} {\vec{c}}  {\vec{v}}^T  ,
\end{gather*}
where~$I_2$ is the $2\times2$ identity matrix, and~${\vec{v}}  $ is the re-entry vector, which is defined as the ODE's velocity vector at the re-entry point~${q}.$ That is:
\begin{align*}
    {\vec{v}} & =
    \begin{pmatrix}
        -\alpha \xi e^{-\alpha\tau} \\
        -\beta \chi e^{-\beta\tau}
    \end{pmatrix} .
\end{align*}
When acting on any unit vector in~${\mathbb{R}}^2$ the above oblique projection produces a vector whose length can be up to~$
1 / \left\lvert \cos{\measuredangle\left( {\vec{v}},  {\vec{c}}  \right)}  \right\rvert .  $

\begin{figure}
\begin{center}
    \begin{tikzpicture}[scale=1.2]
\begin{axis}[
  axis lines = none,
  xmin=-2,xmax=5,
  ymin=-2,ymax=5,
    ticks = none,
]
    \draw[dashed,draw=black!50] (0.5,4)--(4,4);
    \draw[dashed,draw=black!50] (0,-3)--(0,2);
    \node[draw=none, name = q]  at (0,0)  { };
    \node[draw=none, name = sp]  at (0,-2)  { };
    \node[draw=none, name = sp-left]  at (-0.38,-1.9)  {};
    \node[draw=none, name = sp-right]  at (1.2,-1.2*26/25)  {};
    \node[draw=none, name = p]  at (2.68,4)  {};
    \node[draw=none, name = ep]  at (0.68,4)  {};
    \node[draw=none, name = ep-below]  at (2.68-1.4,4-1.4*0.30/0.65) {};
    \node[draw=none, name = ep-above]  at (2.68-0.8,4+0.8*2.6/2.50) {};
    \draw pic["${\eta}$", draw = c5, <->, angle eccentricity=1.25, angle radius=1cm]
    {angle=sp-left--q--sp};
    \draw pic[" ", draw = c2, <->, angle eccentricity=1.2, angle radius=0.7cm]
    {angle=sp--q--sp-right};
    \draw pic["{\textcolor{c5}{${\mu}$}}", draw = c5, <->, angle eccentricity=1.25, angle radius=1cm]
    {angle=ep--p--ep-below};
    \draw pic["{\textcolor{c2}{$\theta_{c}$}}", draw = c2, <->, angle eccentricity=1.3, angle radius=0.7cm]
    {angle=ep-above--p--ep};
    \node[draw=none,rotate = -50,below right] at (0.5,-0.6) {\textcolor{c2}{${\frac{\pi}{2}}-\theta_{c}$}};
    \addplot[c2,domain=-2:5,samples=20,thick]{4*x/2.68};
  \addplot[c3,domain=0.0:0.8,samples=50,variable=\t, ultra thick](%
       {t},%
       {1.5*(ln(5*t+1))}%
     );
  \addplot[<-,c3,domain=0.80:2.68,samples=50,variable=\t, ultra thick](%
       {t},%
       {1.5*(ln(5*t+1))}%
     );
    \draw[c5, thick,->] (2.68,4) -- (2.68-1.4,4-1.4*0.30/0.65);
    \draw[c5, thick,->] (0,0) -- (-0.38,-1.9); 
    \draw[c2, thick,->] (0,0) -- (0.8,-0.8*2.6/2.50); 
    \draw[c2, thick,->] (2.68,4) -- (2.68-0.8,4+0.8*2.6/2.50); 
   \node at (0,0) {{\Large{\textcolor{c2}{${\mathbf{\bullet}}$}}}};
      \node[right] at (0.2,0) {{{\textcolor{c2}{${q}=\left( \xi e^{-\alpha \tau} , \chi e^{-\beta \tau}\right)$}}}};
   \node at (2.68,4) {{\Large{\textcolor{c2}{${\mathbf{\bullet}}$}}}};
      \node[below right] at (2.70,4) {{{\textcolor{c2}{${p}=\left( \xi , \chi \right)$}}}};
\end{axis}
\end{tikzpicture}
\end{center}
\caption{Trajectory exit and re-entry angles,
 as per the diagonal realization~\eqref{eqn:twoDimensionalDiagonalODE}.
The two dashed lines are perpendicular to each other, and are each parallel to a coordinate axis in the~$\vec{z}$-space.}
\label{fig:angles-second-order-diagonal}
\end{figure}
    Using the sub-multiplicative property of the spectral norm, we can bound the linearization's scaling coeffecient~$\kappa_{\vec{p}}$  by the product of the following three factors:
\begin{itemize}
    \item{magnitude of the cosine of the {\textit{exit angle}}
${\measuredangle\left( {\vec{u}},  {\vec{c}}  \right)} , $
}
    \item{$\max\left\{  e^{-\alpha\tau} , e^{-\beta\tau} \right\},$ which is the spectral norm of the two dimensional linear map ${\rm{diag}}\left\{ e^{-\alpha\tau} , e^{-\beta\tau} \right\},$  and,%
}
    \item{
magnitude of the secant of the {\textit{re-entry angle}}
${\measuredangle\left( {\vec{v}},  {\vec{c}}  \right)} . $
}
\end{itemize}
The second of the above factors has a magnitude less than one. So we focus attention on the product of the first and the third factors:
\begin{align}
    {\frac
    {\left\lvert \cos{\measuredangle\left( {\vec{u}},  {\vec{c}}  \right)}  \right\rvert }
    {\left\lvert \cos{\measuredangle\left( {\vec{v}},  {\vec{c}}  \right)}  \right\rvert }
    }
    & =
    {\frac
    {\left\lvert \cos{\measuredangle\left( {\vec{u}},  {\vec{c}}  \right)}  \right\rvert }
    {\left\lvert \cos{ \left( \pi - \measuredangle\left( {\vec{v}},  {\vec{c}}  \right) \right)}  \right\rvert }
    }
    .
\label{eqn:ratioOfCosines}
\end{align}
From Figure~\ref{fig:angles-second-order-diagonal},  we  can see that both the angles~$  \measuredangle\left( {\vec{u}},  {\vec{c}}  \right),$ and $  { \left( \pi - \measuredangle\left( {\vec{v}},  {\vec{c}}  \right) \right)}$ are acute angles, and are given by:
\begin{gather*}
    \measuredangle\left( {\vec{u}},  {\vec{c}}  \right)
     =
     {\mu} + \theta_{c} ,
    \ \
    \ \
{ \pi - \measuredangle\left( {\vec{v}},  {\vec{c}}  \right)  }
     =
     {\eta} 
     \pi / 2 - \theta_{c} ,
\end{gather*}
where~${\mu}$ is the positive acute angle made by the exit velocity vector~$\vec{u}$ with the horizontal~(${z}_1$-axis),
${\eta}$~is the positive acute angle made by the re-entry velocity vector~$\vec{u}$ with the vertical~(${z}_2$-axis), and
${\theta}_{c}$~is the positive acute angle made by the vector~$\vec{c}$ with the horizontal. And
\begin{gather*}
    \mu =  \arctan{\left({\frac{\beta\chi}{\alpha\xi}}\right)} , \
    \eta =  \arctan{\left({\frac{\alpha\xi e^{-\alpha\tau} }{\beta\chi e^{-\beta\tau} }}\right)} , \\
  {\theta}_{c}   =  \arctan{\left({\frac{\beta\chi}{\alpha\xi}}\right)} .
\end{gather*}
We can try to compute the RHS of~\eqref{eqn:ratioOfCosines}, explicitly in terms of trigonometric (and exponential) functions. But this expression while available as an explicit transcendental function, turns out to be unwieldy.

So we shall change state variables, so that we can further simplify the geometric picture of Figure~\ref{fig:angles-second-order-diagonal}.

\noindent
    {\textsf{Substage~(ii-c): {\underline{Switching line with slope~$+1$}:}}}
We had proved in Substage~ii-a that only positive values are possible for the parameters~$c_1, c_2$ which described the switching line~${\mathcal{L}}_z$ in~$({z}_1, {z}_2)$-coordinates. Hence the following linear, anisotropic scaling is invertible: 
    \begin{align*}
        \vec{\widetilde{z}} & \triangleq
         \begin{bmatrix}
             c_1 &   0  \\
             0   & c_2
         \end{bmatrix}
        \vec{z}
    \end{align*}
This second change of state variables does two useful things:
(1)~it gives a slope of~$+1$ to the switching line:
\begin{align*}
    {\widetilde{\mathcal{L}}}_z & \triangleq
     \left\{\left.\begin{pmatrix}
         {\widetilde{z}}_1  \\  {\widetilde{z}}_2
         \end{pmatrix}
         \right\rvert  {\widetilde{z}}_1  - {\widetilde{z}}_2     + c_3 = 0 \right\} .
\end{align*}
and (2)~it preserves the remainder of the geometric picture described in Figure~\ref{fig:angles-second-order-diagonal}.

Thus, the ODE flow in the~${\widetilde{\vec{z}}}$-space is diagonal as before:
\begin{align*}
    {\frac{d}{dt}}
    \begin{pmatrix}
        {\widetilde{z}}_1 \\ {\widetilde{z}}_2
    \end{pmatrix}
    & =
    \begin{bmatrix}
        -\alpha &  0       \\
        0       & -\beta
    \end{bmatrix}
    \begin{pmatrix}
        {\widetilde{z}}_1 \\ {\widetilde{z}}_2
    \end{pmatrix} ,
    \; \text{and let} \\
    \begin{pmatrix}
        {\widetilde{\xi}} \\ {\widetilde{\chi}}
    \end{pmatrix}
     & \triangleq
    \begin{pmatrix}
        {\widetilde{z}}_1 (0) \\ {\widetilde{z}}_2 (0)
    \end{pmatrix}
    =
    \begin{pmatrix}
        c_1 \xi \\ c_2 \chi
    \end{pmatrix}
    .
\end{align*}
Denote by~$\vec{\widetilde{u}}$ the exit vector,
 by~$\vec{\widetilde{v}}$ the re-entry vector, and
 by~$\vec{\widetilde{c}}$ the normal 
 vector:~$ {\left( 1 \; -1 \right)}^T.$

Since both the exit point~$ {( {\widetilde{\xi}} ,  {\widetilde{\chi}} )}^T  $ and the re-entry point~$ {( {\widetilde{\xi}} e^{-\alpha\tau}  ,  {\widetilde{\chi}} e^{-\beta\tau}  )}^T  $ satisfy:~$ {\widetilde{z}}_1  - {\widetilde{z}}_2     + c_3 = 0 ,  $
\begin{align}
    {\frac{ {\widetilde{\xi}} }{ {\widetilde{\chi}} }}
    & =     {\frac{ 1 - e^{-\beta\tau} }{ 1 - e^{-\alpha\tau}  }}  .
\label{eqn:initialAndFinalStatesOnSameLine}
\end{align}

The two  angles~$  \measuredangle\left( {\widetilde{\vec{u}}},  {\widetilde{\vec{c}}}  \right),$ and $  { \left( \pi - \measuredangle\left( {\widetilde{\vec{v}}},  {\widetilde{\vec{c}}}  \right) \right)}$ are acute angles as before, and are given by:
\begin{gather*}
    \measuredangle\left( {\widetilde{\vec{u}}},  {\widetilde{\vec{c}}}  \right)
     =
     {\widetilde{\mu}}
     + {\frac{\pi}{4}} ,
    \ \
    \ \
    { \pi - \measuredangle\left( {\widetilde{\vec{v}}},  {\widetilde{\vec{c}}}  \right)  }
     =
     {\widetilde{\eta}}  
     + {\frac{\pi}{4}} ,
\end{gather*}
where~${\widetilde{\mu}} $ is the positive acute angle made by the exit velocity vector~$\vec{u}$ with the horizontal~(${z}_1$-axis),
${\widetilde{\eta}}  $~is the positive acute angle made by the re-entry velocity vector~$\vec{u}$ with the vertical~(${z}_2$-axis).

The angle:~$ \measuredangle\left( {\vec{u}},  {\vec{c}}  \right)$ is smaller than the angle~${ \pi - \measuredangle\left( {\vec{v}},  {\vec{c}}  \right)  } $,  if the angle~$  {\widetilde{\mu}}  $ is smaller than the angle~$ {\widetilde{\eta}} .$ Consider the ratio:
\begin{align*}
    {\frac
    {
      \tan{ \widetilde{\eta} }
     }
    {
      \tan{ \widetilde{\mu} }
    }
    }
    & =
    \left.
          \left( {\frac{\alpha {\widetilde{\xi}} e^{-\alpha\tau} }{ \beta {\widetilde{\chi}} e^{-\beta\tau} }}  \right)
          \middle/
          \left( {\frac {  \beta {\widetilde{\chi}} } {\alpha {\widetilde{\xi}}  } }  \right)
          \right.
          \\
          & =
          {\frac{ \alpha^2 }{ \beta^2 }}
          {\frac{ {\widetilde{\xi}}^2 }{ {\widetilde{\chi}}^2 }}
          {\frac{ e^{-\alpha\tau} }{ e^{-\beta\tau} }}
          , \ \text{and using~\eqref{eqn:initialAndFinalStatesOnSameLine}} \\
          & =
          {\left(
                 {\frac
                 { \left. \left( e^{+\beta\tau / 2} -  e^{-\beta\tau / 2} \right) \middle/ (2 \beta ) \right. }
                 { \left. \left( e^{+\alpha\tau / 2} -  e^{-\alpha\tau / 2} \right) \middle/ (2 \alpha ) \right. }
                 }
           \right)}^2
           \\
          & =
          {\left(
             {\frac
             { \left. \sinh{\left(\beta\tau / 2 \right)}  \middle/ \beta  \right. }
             { \left. \sinh{\left(\alpha\tau / 2 \right)}  \middle/  \alpha  \right. }
            }
           \right)}^2
           \\
           & < .,
\end{align*}
The last inequality is true because,
for any fixed positive~$t$,
\begin{align*}
 {\frac{1}{\gamma}}    \sinh{\gamma t}
    & = t + {\frac{1}{6}} \gamma^2 t^3 + {\frac{1}{120}} \gamma^4 t^5
+ {\frac{1}{5040}} \gamma^6 t^7 + \ldots
\end{align*}
which is monotonically increasing in~$\gamma.$

    {\textsf{{\underline{Wrapping up the argument}:}}}
 By showing that the re-entry angle into the line~$
    {\widetilde{\mathcal{L}}}_z 
$ is smaller than the exit angle out of it, 
 we have proved the first part of the Lemma.

Figure~\ref{fig:integralCurvesDiagonalRealization}
shows that the entry angle into the parallel line~$
    {\widetilde{\mathcal{L}}}_z^{\prime} $ 
must be smaller than the re-entry angle into the line~$     
    {\widetilde{\mathcal{L}}}_z .$ This proves the remaining part of the Lemma.
\end{proof}

\bibliographystyle{IEEEtran}

\def\polhk#1{\setbox0=\hbox{#1}{\ooalign{\hidewidth
  \lower1.5ex\hbox{`}\hidewidth\crcr\unhbox0}}} \def\cprime{$'$}


\end{document}